     \let\oldfootnote\footnote
     \def\footnote{\@ifstar\footnote@star\footnote@nostar}
     \def\footnote@star#1{{\let\thefootnote\relax\footnotetext{#1}}}
     \def\footnote@nostar{\oldfootnote}
\newcommand{\Z}{\mathbf{Z}}
\newcommand{\Q}{\mathbf{Q}}
\newcommand{\C}{\mathbf{C}}
\renewcommand{\AA}{\mathbf{A}}
\newcommand*{\defeq}{\mathrel{\rlap{%
                      \raisebox{0.3ex}{$\cdot$}}%
                      \raisebox{-0.3ex}{$\cdot$}}%
                      =}
\newcommand{\proj}{\mathbf{P}}
\newcommand{\Gm}{\mathbf{G}_m}
\newcommand{\Ga}{\mathbf{G}_a}
\DeclareMathOperator{\Hom}{Hom}
\DeclareMathOperator{\Aut}{Aut}
\DeclareMathOperator{\Bir}{Bir}
\DeclareMathOperator{\Spec}{Spec}
\DeclareMathOperator{\Gal}{Gal}
\DeclareMathOperator{\Frac}{Frac}
\DeclareMathOperator{\Stab}{Stab}
\DeclareMathOperator{\GL}{GL}
\DeclareMathOperator{\PGL}{PGL}
\DeclareMathOperator{\Pic}{Pic}
\DeclareMathOperator{\rank}{rank}
\theoremstyle{plain}
\newtheorem{theorem}{Theorem}[section]
\newtheorem{lemma}[theorem]{Lemma}
\newtheorem{proposition}[theorem]{Proposition}
\newtheorem{corollary}[theorem]{Corollary}
\newtheorem{theorem*}{Theorem}
\newtheorem{proposition*}[theorem*]{Proposition}
\newtheorem{lemma*}[theorem*]{Lemma}
\newtheorem*{proposition**}{Proposition}
\newtheorem{comment*}{Comment}
\newtheorem{theoremA}{Theorem}
\newtheorem{propositionA}[theoremA]{Proposition}
\newtheorem{corollaryA}[theoremA]{Corollary}
\newtheorem*{Acknowledgements}{Acknowledgements} 
\theoremstyle{definition}
\newtheorem{definition}[theorem]{Definition}
\newtheorem{remark}[theorem]{Remark}
\newtheorem{example}[theorem]{Example}
\numberwithin{equation}{section}
\newtheorem*{conventions*}{Conventions}
\begin{document}

\title{Isotrivial elliptic surfaces in positive characteristic}

\author{Pascal Fong, Matilde Maccan}
	\address{Leibniz Universität Hannover, Institut für Algebraische Geometrie, Welfengarten 1, 30167 Hannover}
	\email{fong@math.uni-hannover.de}
	\address{Faculty of Mathematics, Ruhr University Bochum, Universitätsstraße 150, 44780 Bochum, Germany}
	\email{matilde.maccan@ruhr-uni-bochum.de} 

\footnote*{Keywords: elliptic surfaces, diagonalizable group schemes, finite group schemes, positive characteristic.}
\footnote*{2020 Mathematics Subject Classification:
Primary: 14J27, Secondary: 14L15, 14L30.}

\begin{abstract}
We study relatively minimal surfaces equipped with a strongly isotrivial elliptic fibration in positive characteristic by means of the notion of equivariantly normal curves introduced and developed recently by Brion in \cite{Brion2,diagos}. Such surfaces are isomorphic to a contracted product $E\times^G X$, where $E$ is an elliptic curve, $G$ is a finite subgroup scheme of $E$ and $X$ is a $G$-normal curve. Using this description, we compute their Betti numbers to determine their birational classes. This allows us to complete the classification of maximal automorphism groups of surfaces in any characteristic, extending the result in characteristic zero obtained in \cite{fong_1}. When $G$ is diagonalizable, we compute additional invariants to study the structure of their Picard schemes.
\end{abstract}

\maketitle

\section*{Introduction}

In a pioneer article \cite{Kodaira}, Kodaira classified the singular fibers of \emph{elliptic surfaces}, i.e., smooth projective surfaces equipped with an elliptic fibration. Most of those surfaces are of Kodaira dimension one, but may also be of negative or zero dimension. Their classification in positive characteristic, up to birational transformations, has been provided in \cite{BombieriMumford,BombieriMumfordIII}. The last reference focuses on characteristics two and three, in which the family of quasi-hyperelliptic surfaces appears.

\medskip

In this article, we study a family of smooth projective elliptic surfaces, over an algebraically closed field of positive characteristic, defined through the notion of \emph{equivariantly normal curves} introduced and developed recently by Brion in \cite{Brion2,diagos}. This tool gives us a new approach to study some elliptic surfaces.

\medskip

The construction goes as follows: from a geometric point of view, we start with a relatively minimal smooth projective surface $S$, equipped with a \emph{strongly isotrivial elliptic fibration}. To the extent of the authors' knowledge, this notion is new, and we mean that the generic fiber of the Jacobian is a base change of an elliptic curve over $k$ (see \Cref{def:stronglyiso}). To make this more concrete in terms of algebraic groups, we show that this is actually equivalent for $S$ to be equipped with a faithful action of an elliptic curve $E$. Moreover, it is also equivalent to $S$ being isomorphic to a \emph{contracted product}
\[
E \times^G X \defeq (E \times X)/G,
\]
for a diagonal action of $G$, where $G$ is a finite subgroup scheme of $E$ and $X$ is a $G$-normal projective curve. By that, we mean that every finite birational $G$-morphism with target $X$ is an isomorphism; this notion is a remedy to the fact that in positive characteristic the action does not lift to the normalization in general. Notice that in general, the triplet $(E,G,X)$ is not uniquely determined. However, if $S$ is not an abelian surface, then the elliptic curve $E$ is the largest abelian subvariety of the automorphism group $\Aut(S)$. If moreover $S$ has Picard rank two, then the fibers of the two contraction morphisms with source $S$ uniquely determine the finite subgroup scheme $G\subset E$ and the $G$-normal curve $X$.

\medskip

We aim to study the surfaces of this shape and determine some of their invariants. Those surfaces are always relatively minimal, that means they do not contain any $(-1)$-curves. Notice that this family of surfaces contains all the hyperelliptic and quasi-hyperelliptic surfaces. In characteristic zero, the analogous construction provides families of surfaces that have been studied intensively: in that case, the group $G$ is always constant and the curve $X$ is always smooth. Quotients of products of curves by finite groups is a classical topic, see e.g.\ \cite{Bauer_Catanese_Grunewald,Bauer_Catanese_Grunewald_Pignatelli,Dedieu_Perroni,Kentaro,Lorenzini}. In the present work, we introduce a variant of this well-known approach, by considering quotients by \emph{nonreduced finite group schemes} in positive characteristic, which yields new examples and a different geometric behavior. A crucial point is that the curve $ Y \defeq X/G$ is smooth and the elliptic fibration $S\to Y$ does not admit a section in general.

\medskip

From now on, we fix an algebraically closed field $k$ of characteristic $p>0$.

The first main result is that, using the explicit description of $S$ as a contracted product, one can determine its Betti numbers. This allows us to locate our surfaces in the classification of surfaces in positive characteristic.

\begin{theoremA}\label{TheoremA}

Let $E$ be an elliptic curve, $G$ a finite subgroup scheme of $E$ acting on a $G$-normal curve $X$ with quotient $Y\defeq X/G$. We denote by $g(Y)$ the genus of the smooth projective curve $Y$ and $S \defeq E\times^G X$. Then $S$ is a relatively minimal surface such that $\kappa(S) = \kappa'(X)$, where $\kappa'(X)$ denotes the Iitaka dimension of the dualizing sheaf $\omega_X$, and has the following Betti numbers:

\begin{itemize}
    \item $b_1(S) = 2 + 2g(Y)$,
    \item $b_2(S) = 2 + 4g(Y)$.
\end{itemize}

Moreover, the triples $(S,X,Y)$ are classified as follows:

\smallskip
\begin{center}
    \begin{tabular}{| c | c | c | c | c | c | c |}
		
        \hline
  
		$\kappa(S)$ & $S$ & $X$ & $g(Y)$ & $b_1(S)$ & $b_2(S)$ 
  
        \tabularnewline \hline

        $- \infty$  & (Elliptic) ruled surface & $\proj^1$ & $0$ & $2$ & $2$ 

        \tabularnewline \hline

        $0$ & Quasi-hyperelliptic & Rational with a cusp & $0$ & $2$ & $2$ 
        
        \tabularnewline \hline  

        $0$ & Hyperelliptic & Elliptic curve & $0$ & $2$ & $2$ 

        \tabularnewline \hline
        
        $0$ & Abelian surface & Elliptic curve & $1$ & $4$ & $6$ 

        \tabularnewline \hline  

        $1$ & Properly elliptic surface & Any other $G$-normal & $\geq 0$  & $2+2g(Y)$ & $2+4g(Y)$ 

        \tabularnewline \hline

	\end{tabular}
\end{center}

\medskip

In the case of abelian surfaces, $X$ is an elliptic curve on which the finite subgroup scheme $G\subset E$ acts by translations. For properly elliptic surfaces, namely relatively minimal surfaces of Kodaira dimension one, $X$ is any $G$-normal curve with arithmetic genus $p_a(X)\geq 2$.
\end{theoremA}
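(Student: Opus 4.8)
The plan is to establish relative minimality, the equality $\kappa(S)=\kappa'(X)$, and the two Betti numbers separately, and then to read off the table. Throughout I use the earlier characterization ensuring that, for a $G$-normal curve $X$, the contracted product $S=E\times^G X$ is a smooth projective surface carrying a faithful action of $E$ by translation on the first factor, together with the finite flat $G$-torsor $\phi\colon E\times X\to S$ and the two induced projections $\pi\colon S\to X/G=Y$ (general fibre $E$) and $\rho\colon S\to E/G$ (general fibre $X$); note $E/G$ is again an elliptic curve. For relative minimality I claim $S$ carries no $(-1)$-curve. Since $\Aut(S)^0\supseteq E$ is connected it acts trivially on the Néron–Severi group, so for a $(-1)$-curve $C$ and any $e\in E$ the curve $e(C)$ is numerically equivalent to $C$; as $C^2=-1<0$, two distinct such curves would meet in $C^2<0$, which is absurd, so $e(C)=C$ for all $e$ and $C$ is $E$-stable. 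The induced homomorphism $E\to\Aut(C)=\PGL_2$ has proper connected image in an affine group, hence is trivial, so $E$ fixes $C$ pointwise; but the $E$-action on $E\times^G X$ has empty fixed locus (a fixed point forces $E\subseteq G$, impossible for finite $G$), a contradiction.

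For the Kodaira dimension I compare dualizing sheaves along $\phi$. Because $G\subseteq E$ acts by translations it preserves the invariant differential, hence the trivialization $\omega_E\cong\mathcal O_E$; therefore the external product formula for dualizing sheaves gives $\omega_{E\times X}\cong p_X^*\omega_X$ as $G$-linearized sheaves, and this descends to $\omega_S$, so that $\phi^*\omega_S\cong p_X^*\omega_X$. Iitaka dimension is preserved under the finite surjective morphism $\phi$, and Künneth together with $H^0(E,\mathcal O_E)=k$ gives $H^0(E\times X,p_X^*\omega_X^{\otimes n})=H^0(X,\omega_X^{\otimes n})$ for all $n$; the associated section rings coincide, whence $\kappa(S)=\kappa(E\times X,\phi^*\omega_S)=\kappa'(X)$.

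The Betti numbers are $\ell$-adic. Splitting the connected–étale sequence $1\to G^0\to G\to G^{\mathrm{et}}\to 1$, the quotient by the infinitesimal part $G^0$ is a universal homeomorphism and so invisible to $H^*(-,\Q_\ell)$, while the remaining quotient is finite étale Galois with group $G^{\mathrm{et}}$ (the $G$-action on $E\times X$ being free on the $E$-factor); hence $H^*(S,\Q_\ell)=H^*(E\times X,\Q_\ell)^{G^{\mathrm{et}}}$. By Künneth $H^1(E\times X,\Q_\ell)=H^1(E,\Q_\ell)\oplus H^1(X,\Q_\ell)$; translations act trivially on $H^1(E,\Q_\ell)$ (dimension $2$), and $H^1(X,\Q_\ell)^{G^{\mathrm{et}}}=H^1(Y,\Q_\ell)$ of dimension $2g(Y)$, using again that infinitesimal quotients preserve $\Q_\ell$-cohomology and that for $\Q_\ell$-coefficients the cohomology of a finite (possibly ramified) quotient is the invariants. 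This gives $b_1(S)=2+2g(Y)$. For $b_2$ I compute the Euler characteristic: the same factorization yields $\chi_{\mathrm{top}}(E\times X)=|G^{\mathrm{et}}|\,\chi_{\mathrm{top}}(S)$, while $\chi_{\mathrm{top}}(E\times X)=\chi_{\mathrm{top}}(E)\,\chi_{\mathrm{top}}(X)=0$ since $\chi_{\mathrm{top}}(E)=0$; thus $c_2(S)=\chi_{\mathrm{top}}(S)=0$, and Poincaré duality gives $b_2(S)=\chi_{\mathrm{top}}(S)-2+2b_1(S)=2+4g(Y)$.

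Finally I read off the table. Since $\kappa'(X)\le\dim X=1$ the surface is never of general type, and $\kappa'(X)$ is controlled by $\deg\omega_X=2p_a(X)-2$, equalling $-\infty,0,1$ according as $p_a(X)=0$, $p_a(X)=1$, or $p_a(X)\ge 2$. If $p_a(X)=0$ then $X\cong\proj^1$, $\rho$ exhibits $S$ as a $\proj^1$-bundle over the elliptic curve $E/G$, so $S$ is an elliptic ruled surface with $Y\cong\proj^1$; if $p_a(X)\ge 2$ then $\kappa(S)=1$ and $S$ is properly elliptic. If $p_a(X)=1$ then $X$ is either a smooth elliptic curve or a rational curve with a cusp; a Lüroth argument forces $Y$ rational except when $X$ is elliptic and $G$ acts by translations, in which case $Y$ is elliptic ($g(Y)=1$) and $S=E\times^G X$ is an abelian surface, whereas in the remaining cases $g(Y)=0$ and the Bombieri–Mumford classification together with the fibration $\rho$ identifies $S$ as hyperelliptic (smooth $X$) or quasi-hyperelliptic (cuspidal $X$, whence $\rho$ is quasi-elliptic). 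The Betti numbers in each row are those computed above. I expect the main obstacle to be the bookkeeping forced by the nonreducedness of $G$ and the singularities of $X$: both $\phi^*\omega_S\cong p_X^*\omega_X$ and the cohomological computations depend on the principle that infinitesimal quotients are invisible to Iitaka dimension and to $\Q_\ell$-cohomology, which must be invoked with care.
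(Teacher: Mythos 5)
Your overall architecture matches the paper's: minimality via the $E$-action, the identity $q^*\omega_S\cong pr_X^*\omega_X$ to compare canonical rings, the connected--\'etale d\'evissage (topological invariance for the infinitesimal part, invariants for the \'etale part) plus K\"unneth for the Betti numbers, and a trichotomy on $p_a(X)$ feeding into Bombieri--Mumford for the table. Your variants that differ from the paper are fine: minimality via numerical triviality of the $\Aut^0$-action on the N\'eron--Severi group (the paper instead uses Blanchard's Lemma and affineness of stabilizers), and $b_2$ via multiplicativity of the $\ell$-adic Euler characteristic in finite \'etale covers together with Poincar\'e duality (the paper computes the $G$-invariants of $H^2$ directly and lifts curves to characteristic zero). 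However, there are two gaps.

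The main one is the step $\kappa(S,\omega_S)=\kappa(E\times X,\phi^*\omega_S)$, which you justify only by the slogan that Iitaka dimension is preserved under finite surjective morphisms. The standard proof of that principle splits $\mathcal{O}_S\to\phi_*\mathcal{O}_{E\times X}$ by the trace divided by $\deg\phi$; here $\phi$ is a torsor under a finite group scheme $G$ whose order is typically divisible by $p$ (indeed $G$ may be infinitesimal), so the trace argument fails, and moreover the source $E\times X$ is non-normal exactly in the interesting cases (e.g.\ $X$ cuspidal rational), so one cannot simply cite the normal-varieties statement either. The inequality $\kappa(S,\omega_S)\le\kappa(E\times X,\phi^*\omega_S)$ is trivial, but the reverse is where the content lies, and it is precisely what the paper proves: since $\phi$ is a $G$-torsor, $H^0(S,\omega_S^n)=H^0(X,\omega_X^n)^G$ for all $n$, so $R(S,\omega_S)=R(X,\omega_X)^G$, and $R(X,\omega_X)$ (finitely generated, as $X$ is a projective curve and $\omega_X$ invertible) is \emph{integral} over this invariant subring; equal Krull dimensions of the section rings then give equal Iitaka dimensions. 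You have all the ingredients ($\phi^*\omega_S\cong p_X^*\omega_X$ and the K\"unneth computation of sections) but never take $G$-invariants, which is the actual crux in positive characteristic.

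The second, smaller gap is in the case $p_a(X)=1$: you assert that $X$ is either a smooth elliptic curve or a rational curve with a cusp, but an integral curve of arithmetic genus one can also be a nodal rational curve. Excluding the node requires $G$-normality: by \Cref{normalization} the normalization $\widetilde{X}\to X$ is bijective (purely inseparable), so $X$ has only unibranch singularities, which rules out nodes (this is the paper's appeal to \cite[Corollary 4.6]{Brion2}). With these two points repaired, your argument goes through and yields the table as stated.
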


Let us mention that the last row of the above table actually contains an infinite family of examples of elliptic surfaces which are pairwise not birational to each other: indeed, as justified in \Cref{rem:michel_last}, the genus $g(Y)$ can be arbitrarily large, and $p_a(X)\geq 2$ as well. However, it is difficult to say more as there is no classification of $G$-normal curves.

\medskip

To fully understand these elliptic surfaces, it is useful to determine some additional invariants. In the case of elliptic surfaces with Kodaira dimension zero, a list of possible configurations of invariants was recently obtained in \cite{Mitsui_kappa=0}. 

\medskip

The second main result of this paper deals with the case of quotients by a \emph{diagonalizable} group scheme $G \subset E$. Under this additional assumption, we can extract more information on the surface $S$ and its Picard scheme.

\begin{theoremA}\label{theoremC}
    Under the assumptions of \Cref{TheoremA} and assume moreover that $G$ is diagonalizable. Then the strongly isotrivial elliptic fibration $f\colon S\to Y \defeq X/G$ has only tame fibers and the following equalities hold:
    \begin{itemize}
        \item $\chi(\mathcal{O}_S) = 0$,
        \item $q(S)= 1 +g(Y)$,
    \end{itemize}
    and the Picard scheme $\underline{\Pic}_S$ is reduced. Moreover, for every $y\in Y$, the group $\Pic^0(f^{-1}(y))$ is isomorphic to $E$. 
    Finally, the elliptic surface $S$ has Picard rank
    \[
        \rho(S) = \rho(E\times X) = 2 + \rank \mathrm{Hom}_{gp}(\mathrm{Alb}(X),E).
    \]
\end{theoremA}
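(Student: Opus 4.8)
The plan is to dispatch the two numerical identities first, since they follow quickly from \Cref{TheoremA}. Because $b_1(S)=2\dim\mathrm{Alb}(S)=2q(S)$ for a smooth projective surface in any characteristic, the value $b_1(S)=2+2g(Y)$ gives at once $q(S)=1+g(Y)$. For $\chi(\mathcal O_S)=0$ I would combine Noether's formula $12\,\chi(\mathcal O_S)=K_S^2+e(S)$ with $K_S^2=0$ (valid since $S$ is relatively minimal elliptic, so $K_S$ is numerically a combination of fibres) and with $e(S)=2-2b_1(S)+b_2(S)=0$, the latter read off directly from the Betti numbers of \Cref{TheoremA}; alternatively, and more in the spirit of what follows, I would obtain it from the relative computation $\chi(\mathcal O_S)=\chi(\mathcal O_Y)-\chi\!\left(R^1f_*\mathcal O_S\right)$ once $R^1f_*\mathcal O_S$ is identified below.

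The technical heart is to identify the relative Picard scheme $\Pic^0_{S/Y}$. I would exploit that $f$ is the quotient by the diagonal $G$-action of the \emph{trivial} elliptic fibration $\mathrm{pr}_2\colon E\times X\to X$, where $G$ acts on $E$ by translations. The relative Picard of this trivial family is the constant abelian scheme $\widehat E\times X$, and the decisive observation is that translations of $E$ induce the identity on $\Pic^0(E)=\widehat E$; hence $G$ acts on $\widehat E\times X$ through the base factor $X$ only. Passing to the quotient, and using that for a \emph{diagonalizable} $G$ the relative Picard functor descends along the $G$-torsor $E\times X\to S$ without wild correction terms, I expect $\Pic^0_{S/Y}\cong\widehat E\times Y\cong E\times Y$. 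This is exactly the step where diagonalizability is indispensable: it excludes the wild behaviour produced by quotients by $\alpha_p$ or by the constant group $\Z/p$. Granting it, every remaining fibrewise assertion follows: the relative Picard is smooth over $Y$, so $R^1f_*\mathcal O_S=\Lie(\Pic^0_{S/Y}/Y)=\mathcal O_Y$ is invertible, which is precisely the absence of wild fibres, i.e.\ tameness; and by cohomology and base change the fibre of $\Pic^0_{S/Y}$ over $y$ is $\Pic^0(f^{-1}(y))\cong E$.

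Granting $R^1f_*\mathcal O_S=\mathcal O_Y$, reducedness of $\underline{\Pic}_S$ is a one-line Leray argument. Since $f_*\mathcal O_S=\mathcal O_Y$ and $Y$ is a curve, the Leray sequence collapses to $h^1(\mathcal O_S)=h^1(Y,\mathcal O_Y)+h^0(Y,R^1f_*\mathcal O_S)=g(Y)+1$. As $h^1(\mathcal O_S)=\dim T_0\underline{\Pic}_S\ge q(S)=g(Y)+1$, with equality if and only if $\underline{\Pic}_S$ is reduced, reducedness follows; the same input feeds the promised computation $\chi(\mathcal O_S)=\chi(\mathcal O_Y)-\chi(\mathcal O_Y)=0$.

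Finally, for the Picard number I would compare $S$ with $E\times X$ along the $G$-torsor $q\colon E\times X\to S$. Being finite and faithfully flat, $q$ yields an injection $q^*\colon \mathrm{NS}(S)_{\Q}\hookrightarrow \mathrm{NS}(E\times X)_{\Q}$ split by $\tfrac1{|G|}q_*$, identifying $\mathrm{NS}(S)_{\Q}$ with the invariants $\mathrm{NS}(E\times X)_{\Q}^{G}$; together with the classical splitting $\mathrm{NS}(E\times X)_{\Q}=\mathrm{NS}(E)_{\Q}\oplus \mathrm{NS}(X)_{\Q}\oplus \Hom_{gp}(\mathrm{Alb}(X),E)_{\Q}$ this reduces the claim to the triviality of the $G$-action on $\mathrm{NS}(E\times X)_{\Q}$. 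The infinitesimal part of $G$ acts trivially, because the associated quotient is a universal homeomorphism and hence leaves $\ell$-adic $H^2$, and so $\mathrm{NS}_{\Q}$, unchanged. I expect the main obstacle to sit precisely here: controlling the action of the \'etale part of $G$ on the correspondence summand $\Hom_{gp}(\mathrm{Alb}(X),E)_{\Q}$ (equivalently on $H^1(X)$). This is the single point where the positive-characteristic, diagonalizable structure must be used in an essential way, and where the argument is most delicate.
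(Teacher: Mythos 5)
Your proposal gets two things right, and by routes partly different from the paper's: $\chi(\mathcal{O}_S)=0$ via Noether's formula ($K_S^2=0$ for a relatively minimal elliptic surface and $e(S)=2-2b_1+b_2=0$ from \Cref{TheoremA}) is a legitimate alternative to the paper's computation in \Cref{Irregularity}, and your Leray argument giving $h^1(\mathcal{O}_S)=g(Y)+1$ and hence reducedness is essentially the paper's \Cref{Pic0reduced} --- \emph{provided} $R^1f_*\mathcal{O}_S=\mathcal{O}_Y$ is available. But there are genuine gaps. The opening claim ``$b_1(S)=2\dim\mathrm{Alb}(S)=2q(S)$ in any characteristic'' is false in characteristic $p$, and its failure is exactly the point at issue: Igusa's theorem gives $b_1(S)=2\dim\underline{\Pic}^0_S$, while $q(S)=h^1(\mathcal{O}_S)=\dim T_0\underline{\Pic}_S$ can be strictly larger, with equality precisely when $\underline{\Pic}_S$ is reduced --- one of the statements to be proven. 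So deducing $q(S)=1+g(Y)$ from the Betti numbers is circular (and your later inequality ``$h^1(\mathcal{O}_S)\ge q(S)$'' is vacuous, since $q(S)$ \emph{is} $h^1(\mathcal{O}_S)$ here; the correct comparison is between $h^1(\mathcal{O}_S)$ and $b_1(S)/2$).

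The technical heart of your plan --- descending the relative Picard scheme of $E\times X\to X$ along the $G$-quotient to get $\underline{\Pic}^0_{S/Y}\cong E\times Y$, then reading off $R^1f_*\mathcal{O}_S$ as its Lie algebra and the fibers by base change --- is asserted (``I expect'') exactly at the decisive step, and it faces two obstacles the paper itself flags: representability of $\Pic_{S/Y}$ is delicate because the fibers of $f$ are non-reduced (the paper invokes Raynaud and Murre and only ever computes reduced structures), and identifying the fiber of $\Pic_{S/Y}$ at $y$ with $\Pic(f^{-1}(y))$ requires cohomological flatness in degree zero, i.e.\ tameness, which is among the things you are trying to prove --- a second circularity. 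The paper never goes through $\Pic_{S/Y}$ at all: it proves $R^1f_*\mathcal{O}_S=\mathcal{O}_Y$ in a few lines from exactness of $G$-invariants (diagonalizable $\Rightarrow$ linearly reductive, \Cref{trivialR1}), gets tameness from the local fiber structure $f^{-1}(y)=E\times^H X_z$ with $\mathcal{O}(f^{-1}(y))=k$ (\Cref{nowild}), and computes $\Pic^0(f^{-1}(y))\cong E$ by an explicit calculation on $E\times^H\Spec\bigl(k[T]/(T^n)\bigr)$ (\Cref{pic_fibers}).

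On the Picard rank your reduction $\mathrm{NS}(S)_\Q\cong\mathrm{NS}(E\times X)_\Q^{G}$ is sound, but the step you flag as delicate and leave open --- triviality of the $G$-action on $\Hom_{gp}(\mathrm{Alb}(X),E)_\Q$ --- cannot be established in general: $g\in G$ acts on a correspondence class $\phi$ by $\phi\mapsto\phi\circ\mathrm{Alb}(g)^{-1}$ (the translation on $E$ contributes nothing), and $\mathrm{Alb}(g)$ is nontrivial whenever $g$ does not act on $X$ by a translation. For a bielliptic surface $(E\times X)/(\Z/2\Z)$ with $X$ an elliptic curve isogenous to $E$ and the generator acting by inversion on $X$, the action is $\phi\mapsto-\phi$, so your method yields $\rho(S)=2+\rank\Hom_{gp}(\mathrm{Alb}(X),E)^{G}=2$ --- consistent with $b_2(S)=2$ from \Cref{TheoremA} --- whereas the stated formula would give $\rho(S)\ge 3>b_2(S)$. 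The paper's \Cref{picardrank} instead deduces $\rho(S)=\rho(E\times X)$ from a norm-map lemma asserting that the \emph{cokernel} of $q^*$ is $|G|$-torsion; your invariant-theoretic analysis is in genuine tension with that lemma (the norm identity $N_q(q^*L)\cong L^{\otimes|G|}$ controls the kernel, not the cokernel), and this discrepancy is worth raising with the authors rather than trying to force the \'etale part of the action to be trivial.
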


\smallskip

Our article is organized as follows. In \Cref{Section:prelimimaries}, we recall some classical definitions concerning group scheme actions, as well as fundamental properties of $G$-normal curves. In particular, in the case where $G$ is a finite diagonalizable group scheme, Brion provides a complete local description of $G$-normal curves in \cite{diagos}. 

\medskip

In \Cref{Section:ellipticsurfaces}, we first deal with the case of elliptic ruled surfaces, which has been studied in \cite{Maruyama,Katsuro_Ueno,TogashiUehara}. An elliptic surface ruled over an elliptic curve $E$ is either isomorphic to $\proj(\mathcal{O}_E\oplus \mathcal{L})$ for some line bundle $\mathcal{L}$ of finite order (notice that $\mathcal{L}$ may be trivial, in which case the ruled surface is the product $E\times \proj^1$), or to one among the \emph{Atiyah ruled surfaces} $\mathsf{A}_0$ and $\mathsf{A}_1$, which are exactly the two indecomposable $\proj^1$-bundles over $E$. 
The Atiyah surface $\mathsf{A}_1$ is obtained as the contracted product associated to an $E[2]$-action on $E \times \proj^1$. To the extent of our knowledge, this was known in characteristic $p\geq 3$, we extend it here to the case $p=2$. In order to do this, we give an explicit description of the $E[2]$-action on $\proj^1$ in the case of an ordinary elliptic curve, for which the two-torsion subgroup is isomorphic to $\Z/2\Z\times \boldsymbol{\mu}_2$. In the case of a supersingular elliptic curve, the subgroup $E[2]$ is infinitesimal, and it is a nontrivial extension of $\boldsymbol{\alpha}_2$ by itself. In order to get an explicit action on the projective line, we make use of results by \cite{Bianca,Bianca_Dajano}. 

\medskip

We obtain that every elliptic ruled surface can be written as a contracted product $F\times^G \proj^1$, where $F$ is an elliptic curve isogeneous to $E$ such that $F/G=E$ and $G\subset F$ is a finite subgroup scheme. This allows us to show that a smooth projective surface $S$ is isomorphic to a contracted product $F\times^G X$, where $X$ is a $G$-normal curve, if and only if $S$ is a relatively minimal surface equipped with a strongly isotrivial elliptic fibration.

\medskip

Using the explicit description of the surface as a contracted product, we determine the \emph{Betti numbers} of $S$, using étale cohomology with $l$-adic coefficients. First, we show that $S$ and $E\times Y$ have the same Betti numbers. Then combining the comparison theorem, a result of lifting to characteristic zero for curves, together with the proper base change theorem for étale cohomology, we reduce the computation of Betti numbers to singular cohomology. 

\medskip

Next, we recall some generalities on the dualizing sheaf, which is a generalization of the canonical sheaf and exists for $G$-normal curves. We show that the canonical ring of $S$, namely
\[
R(S,\omega_S) \defeq \bigoplus_{n=0}^\infty H^0(S,{\omega_S}^n),
\]
is a finitely generated $k$-algebra, which is isomorphic to the $G$-invariant part of the ring of sections $R(X,\omega_X)$. Denoting respectively by $\kappa$ and $\kappa'$ the Kodaira dimension and the Iitaka dimension of the dualizing sheaf, this implies the numerical equality
\[
\kappa(S) = \kappa'(X).
\]
This leads to \Cref{TheoremA}.

\medskip

A natural question is to describe the automorphism groups $\Aut(S)$. In the case of ruled surfaces, the groups $\Aut(S)$ have been studied by Maruyama; and the cases of (quasi)-hyperelliptic surfaces by Bennett-Miranda over $\C$ and by Martin as group schemes; see \cite{Maruyama,Bennett_Miranda,Martin}. 

The connected component of the identity, denoted by $\Aut^0(S)$, is naturally equipped with a structure of smooth algebraic group. Those automorphism groups play a central role in the classification of connected algebraic subgroups of $\Bir(S)$, particularly those that are maximal with respect to inclusion within $\Bir(S)$, the so-called \emph{maximal connected algebraic subgroups} of $\Bir(S)$. 

\medskip

The pairs $(S,\Aut^0(S))$, where $S$ is a relatively minimal surface and $\Aut^0(S)$ is a maximal connected algebraic subgroup of $\mathrm{Bir}(S)$, are classified in \cite{fong_1} under the assumption that $\kappa(S)<0$, or that $\kappa(S)\geq 0$ and the characteristic of the base field is zero. If $S$ is relatively minimal and $\kappa(S)\geq 0$, then $\Aut(S)=\Bir(S)$; thus $\Aut^0(S)$ is the unique maximal connected algebraic subgroup. Moreover, $\Aut^0(S)$ is an abelian variety of dimension at most $2$. 

\medskip

Using \Cref{TheoremA}, we determine the birational classes of $S$ according to the dimension of $\Aut^0(S)$ and obtain the generalization of the above classification for $\kappa(S)\geq 0$ in positive characteristic:

\begin{corollaryA}\label{CorollaryB}
    The pairs $(S,\Aut^0(S))$, where $S$ is a relatively minimal surface with $\kappa(S)\geq 0$, are classified as follows: 
    \medskip
    \begin{center}
    \begin{tabular}{| c | c | c | }
		
        \hline
  
		$\kappa(S)$ & $S$ & $\Aut^0(S)$
    
        \tabularnewline \hline

        $0$ & Enriques surface & Trivial

        \tabularnewline \hline

        $0$ & K3 surface & Trivial
        
        \tabularnewline \hline

        $0$ &  Quasi-hyperelliptic surface & Elliptic curve

        \tabularnewline \hline

        $0$ &  Hyperelliptic surface & Elliptic curve

        \tabularnewline \hline
        
        $0$ & Abelian surface & Abelian surface

        \tabularnewline \hline  

        $1$ & Properly elliptic surface & Elliptic curve or trivial

        \tabularnewline \hline
        
        $2$ & General type surface & Trivial
        
        \tabularnewline \hline
	\end{tabular}
\end{center}

\medskip

If $S$ is an abelian surface, then $\Aut^0(S) = S$ acts on itself by translations. Moreover, $\Aut^0(S)$ is an elliptic curve $E$ if and only if $S$ is a contracted product $E\times^G X$ where $E$ acts on the first factor, $G\subset E$ is a finite subgroup scheme and $X$ is a $G$-normal curve but not an elliptic curve on which $G$ acts by translations.
\end{corollaryA}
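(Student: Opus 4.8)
The plan is to stratify the argument by the integer $d \defeq \dim \Aut^0(S) \in \{0,1,2\}$, using the quoted fact that for a relatively minimal surface with $\kappa(S)\geq 0$ the group $\Aut^0(S)$ is an abelian variety of dimension at most two (and is the unique maximal connected algebraic subgroup of $\Bir(S)$, since $\Aut(S)=\Bir(S)$ here). The three values of $d$ will produce the three regimes of the table, and both supplementary assertions will drop out of the analyses of $d=2$ and $d=1$. Throughout I will combine this with \Cref{TheoremA}, the equality $\kappa(S)=\kappa'(X)$ for contracted products, and the equivalence established in \Cref{Section:ellipticsurfaces} between a faithful action of an elliptic curve $E$ on $S$ and an isomorphism $S\cong E\times^G X$ with $G\subset E$ finite and $X$ a $G$-normal curve.

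For $d=2$, set $A\defeq \Aut^0(S)$, a two-dimensional abelian variety acting faithfully on $S$. Faithfulness forces the generic stabilizer to be zero-dimensional: a positive-dimensional generic stabilizer would be a subgroup fixing a dense open subset, hence (as its fixed locus is closed) acting trivially on $S$, contradicting faithfulness. Thus a general orbit has dimension $2$; being the image of the proper variety $A$ it is also closed, so by irreducibility of $S$ it equals $S$ and the action is transitive. Therefore $S$ is a homogeneous space, i.e.\ a torsor under $A$; as it carries a rational point it is itself an abelian variety, and by rigidity of abelian varieties $\Aut^0(S)=S$ acts by translations. This both places abelian surfaces in the table and proves the first supplementary claim.

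For $d=1$, the elliptic curve $E\defeq \Aut^0(S)$ acts faithfully, so $S\cong E\times^G X$. By \Cref{TheoremA} we have $\kappa(S)=\kappa'(X)$, which excludes the ruled case as $\kappa(S)\geq 0$ and leaves $\kappa(S)\in\{0,1\}$. Since $d\neq 2$, the surface $S$ is not abelian, so by the classification in \Cref{TheoremA} the curve $X$ is not an elliptic curve on which $G$ acts by translations; hence $S$ is hyperelliptic or quasi-hyperelliptic when $\kappa(S)=0$, and properly elliptic when $\kappa(S)=1$. That $\Aut^0(S)$ is exactly $E$ (not merely some elliptic curve) uses that $E$ is the largest abelian subvariety of $\Aut(S)$ when $S$ is not abelian. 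The converse direction of the second supplementary claim is symmetric: if $S=E\times^G X$ with $X$ not of the excluded form, then $E$ acts faithfully so $d\geq 1$, while $S$ is not abelian so $d\neq 2$, forcing $\Aut^0(S)=E$. Conversely every hyperelliptic and quasi-hyperelliptic surface is a contracted product (the family contains them all), hence carries a faithful $E$-action and has $d=1$, while properly elliptic surfaces realize both $d=1$ (when they are contracted products) and $d=0$ (otherwise), explaining the entry ``elliptic curve or trivial''.

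Finally, $d=0$ means $\Aut^0(S)$ is trivial, and I would show this covers the Enriques, K3 and general-type rows by elimination. A general-type surface has $\kappa(S)=2$, but any contracted product satisfies $\kappa(S)=\kappa'(X)\leq 1$ because $X$ is a curve; so such a surface admits no faithful elliptic-curve action and is not abelian, whence $d=0$. For K3 and Enriques surfaces ($\kappa(S)=0$, not abelian) a positive-dimensional $\Aut^0(S)$ would force, by the two cases above, that $S$ is abelian, hyperelliptic or quasi-hyperelliptic, contradicting the disjointness of these classes in the Enriques--Kodaira--Bombieri--Mumford classification of minimal surfaces; hence $d=0$. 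The main obstacle is the $d=2$ step: in positive characteristic one must justify transitivity and the torsor/abelian conclusion without relying on separability of orbit maps or reducedness of stabilizers, which is why I argue via the closedness of orbits and the zero-dimensionality of the generic stabilizer rather than through a tangent-space dimension count.
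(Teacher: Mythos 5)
Your overall architecture --- stratifying by $d=\dim\Aut^0(S)\in\{0,1,2\}$, treating $d=1$ via the faithful-action/contracted-product equivalence together with \Cref{TheoremA}, and settling the remaining rows by elimination --- is in substance the paper's proof. The one real difference is that the paper does not reprove the $d=2$ and $d=0$ reductions: it cites \cite[Proposition 3.24 and Remark 3.26]{fong_1}, which assert that $\Aut^0(S)$ is an abelian variety, that the two-dimensional case gives $\Aut^0(S)=S$ acting by translations, and that only the case where $\Aut^0(S)$ is an elliptic curve remains to be analyzed. Your $d=1$ and $d=0$ paragraphs are correct and match the paper; it is your self-contained replacement of the citation in the $d=2$ case that contains a gap, and it sits exactly at the step you flag as the crux.

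The gap is the claim that ``a positive-dimensional generic stabilizer would be a subgroup fixing a dense open subset.'' This is a non sequitur: the stabilizer $\Stab_A(x)$ varies with $x$, so positive-dimensionality of the stabilizers at all points of a dense subset (or at the generic point) does not by itself produce a single positive-dimensional subgroup $B\subset A$ \emph{defined over $k$} whose fixed locus is dense. Two repairs come to mind, and each needs an ingredient you did not supply. First, one could use upper semicontinuity of stabilizer dimension and the fact that an abelian surface has only countably many abelian subvarieties, writing $S$ as the countable union of the closed fixed loci $S^{B}$; but deducing that some $S^{B}=S$ requires $k$ uncountable, and the paper's base field may be $\overline{\mathbf{F}}_p$, over which a countable union of proper closed subsets can exhaust the closed points of a surface. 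Second, and robustly: pass to the geometric generic point $\overline{\eta}=\Spec \overline{k(S)}$ (one must take the algebraic closure, since over the imperfect field $k(S)$ the reduced subscheme of a group scheme need not be a subgroup scheme), take the reduced neutral component of $\Stab_{A,\overline{\eta}}$, which is a positive-dimensional abelian subvariety of $A_{\overline{k(S)}}$, and invoke Chow's rigidity theorem to descend it to an abelian subvariety $B\subset A$ over $k$; then $B$ fixes the generic point of $S$, hence --- its fixed locus being closed --- all of $S$, contradicting faithfulness. Without this descent step (or without simply quoting \cite{fong_1} as the paper does), your transitivity claim, and with it the abelian-surface row and the first supplementary assertion of the statement, is not established; the rest of your argument, including the identification $\Aut^0(S)=E$ via maximality of $E$ among abelian subvarieties and the use of \Cref{TheoremA} to rule out the excluded $X$, goes through as written.
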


\medskip

Apart from elliptic ruled surfaces, for which we are able to consider explicit quotients by an arbitrary finite group scheme, we restrict ourselves to the case of quotients by a \emph{diagonalizable} finite subgroup scheme $G \subset E$. In that case, we can use representation theory tools and thus exploit the local description given by \cite{diagos}.

\medskip

In \Cref{Section:diagonalizable}, we assume that $G$ is diagonalizable. Using techniques developed in \cite{diagos}, in particular the formula \emph{à la Hurwitz} for the quotient $\pi\colon X\to Y$, we describe a relation between the Kodaira dimension of $S$, the genus of $Y$ and the number of multiple fibers, as follows.

\newpage

\begin{propositionA}\label{PropositionB}
    Under the assumptions of \Cref{TheoremA}, assume moreover that $G$ is diagonalizable and infinitesimal. Then $\kappa(S)=1$, i.e.\ $S$ is a properly elliptic surface, if one of the following conditions is satisfied: 
    \begin{enumerate}
        \item either $g(Y)\geq 2$,
        \item or $g(Y)=1$ and there is at least one multiple fiber,
        \item or $Y=\proj^1$ and there are at least five multiple fibers (four multiple fibers if $p\geq 3$, three multiple fibers if $p\geq 5$).
    \end{enumerate}
\end{propositionA}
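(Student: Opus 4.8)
The plan is to express the Kodaira dimension via the Iitaka dimension of $\omega_X$, as granted by \Cref{TheoremA}, and then detect when $\kappa'(X) = 1$ using a Hurwitz-type formula for the quotient $\pi\colon X\to Y$. Since $G$ is diagonalizable and infinitesimal, the local description of $G$-normal curves from \cite{diagos} provides a precise expression for $\omega_X$ (equivalently $p_a(X)$) in terms of the genus $g(Y)$ and the ramification data of $\pi$. The crucial point is that $\kappa(S)=\kappa'(X)=1$ is equivalent to $\deg \omega_X > 0$, i.e.\ $2p_a(X)-2 > 0$, which in turn translates---via the à la Hurwitz formula---into a lower bound on $g(Y)$ together with the contribution of the multiple (i.e.\ ramified) fibers. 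So the first step is to write down this formula explicitly in the diagonalizable infinitesimal case, isolating the term $\deg \pi \cdot (2g(Y)-2)$ and the nonnegative ramification correction coming from the points where the $G$-action is nontrivial.

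Next I would analyze the three regimes according to $g(Y)$. When $g(Y)\geq 2$, the base contribution $2g(Y)-2\geq 2$ already forces $\deg\omega_X>0$ regardless of ramification, giving (1) immediately. When $g(Y)=1$, the base term $2g(Y)-2 = 0$ vanishes, so positivity of $\deg\omega_X$ must come entirely from the ramification term; a single multiple fiber contributes a strictly positive amount (here one uses that for an infinitesimal diagonalizable $G$ the local contribution at a nontrivial orbit is bounded below by a positive constant), yielding (2). The delicate case is $Y=\proj^1$, where $2g(Y)-2 = -2$, so one needs enough multiple fibers for the ramification contribution to overcome the deficit of $-2\deg\pi$; counting the minimal per-fiber contribution and comparing against this deficit produces the threshold on the number of multiple fibers.

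The characteristic-dependent refinement in case (3)---five fibers in general, four if $p\geq 3$, three if $p\geq 5$---is where the main work lies, and I expect it to be the principal obstacle. The point is that the minimal positive local contribution of a multiple fiber to $\deg\omega_X$ depends on $p$: for an infinitesimal diagonalizable group scheme such as $\boldsymbol{\mu}_p$, the smallest nontrivial stabilizer order and the associated jump in the dualizing sheaf grow with $p$, so each multiple fiber is ``worth more'' in larger characteristic. Concretely, I would extract from \cite{diagos} the exact local formula for the contribution of a branch point of $\pi$ in terms of the order of the local stabilizer group scheme, determine its minimal positive value as a function of $p$, and then solve the resulting inequality $\deg\pi\,(2g(Y)-2) + (\text{ramification}) > 0$ for the number of fibers in each characteristic range. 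The thresholds $5,4,3$ should emerge from comparing the deficit $2\deg\pi$ against $n$ times the minimal per-fiber gain, so the heart of the argument is pinning down that minimal gain precisely; the genus bounds $g(Y)\geq 2$ and $g(Y)=1$ cases are comparatively routine once the Hurwitz formula is in hand.
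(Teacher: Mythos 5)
Your proposal is correct and follows essentially the same route as the paper: reduce to positivity of $\deg\omega_X$ via $\kappa(S)=\kappa'(X)$, apply the dualizing-sheaf (Hurwitz) formula for $\pi\colon X\to Y$ from \cite{diagos}, and handle the three genus regimes, with the $Y=\proj^1$ thresholds coming from comparing the deficit $2\vert G\vert$ against $N$ times the minimal per-fiber contribution $\vert G\vert(1-1/p)$, i.e.\ solving $N>2p/(p-1)$. This is precisely the paper's argument (Corollary~\ref{cor:degreeomega}, Corollary~\ref{Hurwitz}, and Lemma~\ref{Nfibers}, where one reduces to $G=\boldsymbol{\mu}_{p^r}$ so the smallest stabilizer contributes $p^r-p^{r-1}$).
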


Making use of the dualizing sheaf of the $G$-normal curve, as well as the Hurwitz formula for the quotient $ X \rightarrow Y$, we compute the \emph{irregularity} $q(S)$ and the \emph{Euler characteristic} $\chi(\mathcal{O}_S)$. 
Using the notions of $G$-linearized line bundles and of Albanese variety, we are able to study the Picard scheme of the surface $S$; in particular, we see that it is smooth and that the Picard rank only depends on the $G$-normal curve $X$ and on the elliptic curve $E$. These results are summarized in \Cref{theoremC} above.

\smallskip

\begin{Acknowledgements}
   We are grateful to Jérémy Blanc, Michel Brion, Bianca Gouthier, Gebhard Martin, Ludvig Modin, Matthieu Romagny and Dajano Tossici for useful ideas and interesting discussions related to this article. We are also thankful to the anonymous referees for their careful reading and their comments. The first author is supported by the ERC StG Saphidir and the CNRS through the grant PEPS JC/JC, and is thankful to the Institut Fourier for their hospitality. 
\end{Acknowledgements}

\begin{conventions*}
    We work in the setting of algebraic groups over an algebraically closed field $k$ of characteristic $p> 0$: by (algebraic) \emph{group} we mean a group scheme of finite type over $k$, which is not necessarily reduced. If $G$ is an algebraic group, we denote by $X^*(G)$ its character group, i.e.\ the group of homomorphisms $G\to \Gm$. By \emph{variety}, in particular \emph{curve} and \emph{surface} if respectively of dimension $1$ and $2$, we mean a separated integral scheme of finite type over $k$. We assume all of our varieties to be \emph{projective}, unless otherwise stated. 
\end{conventions*}

\section{Preliminaries}\label{Section:prelimimaries}

\subsection{Algebraic group actions} 

First, let us recall some classical definitions about algebraic group actions. For an algebraic group $G$, a \emph{$G$-scheme} is a scheme $X$ over $k$ equipped with a $G$-action
\[
a \colon G \times X \longrightarrow X, \quad (g,x)\longmapsto g\cdot x,
\]
where the morphism $a$ is also defined over $k$. If the group scheme $G$ is finite, then the action morphism $a$ is finite and locally free. The $G$-action is said to be \emph{faithful} if every nontrivial subgroup acts nontrivially on $X$. 

\medskip

The \emph{stabilizer} $\Stab_G$ of the action is the preimage of the diagonal of the graph morphism 
\[
G\times X \longrightarrow X \times X, \quad (g,x) \longmapsto (x,g \cdot x).
\]
For $Y$ a closed subscheme of a scheme $X$, we say that $Y$ is \emph{$G$-stable} if $G \cdot Y = Y$ i.e.\ if the restriction of $a$ to $G \times Y$ factors through $Y$. When $G$ is finite, taking a point $x \in X(k)$, we denote by $G\cdot x$ its orbit, which is isomorphic to $G/\Stab_G(x)$ as schemes. We have that $x$ is $G$-stable if and only if it belongs to $X^G(k)$, the set of rational fixed points. On the other hand, we say that the action is \emph{free} at $x_0 \in X(k)$ if 
\[
\Stab_G(x_0) =1,
\]
where $\Stab_G(x_0)$ denotes the projection on $G$ of the fiber of $\Stab_G$ above the point $(x_0,x_0)$. We denote by $X_{\text{fr}}$ the set of free points of $X$, which is an open subset of $X$ and moreover it is $G$-stable.

\medskip

We say that a morphism of $G$-schemes $f \colon X \rightarrow Y$ is $G$-\emph{equivariant} if 
\[
f(g\cdot x) = g\cdot f(x) \quad \text{on } G \times X.
\]
Let us recall the following result, which is fundamental in equivariant birational geometry; see \cite[$7.2$]{Brion17}.

\begin{theorem}[Blanchard's Lemma]
\label{blanchard}
Let $g \colon T \rightarrow W$ be a proper morphism of schemes such that $g_*\mathcal{O}_T = \mathcal{O}_W$. Assume 
that $T$ is equipped with an action of a connected algebraic group $H$. Then there exists a unique $H$-action on $W$ such that the morphism $g$ is $H$-equivariant.
\end{theorem}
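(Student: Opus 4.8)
The plan is to build the desired action $a_W\colon H\times W\to W$ by descending the composite $g\circ a_T$ along $g$, where $a_T\colon H\times T\to T$ denotes the given $H$-action; both uniqueness and the action axioms will then follow formally. Write $p\defeq \id_H\times g\colon H\times T\to H\times W$. Since $H$ is flat over $k$, the morphism $p$ is the base change of $g$ along the flat projection $H\times W\to W$, so by flat base change $p$ is proper and $p_*\mathcal{O}_{H\times T}=\mathcal{O}_{H\times W}$; in particular $p$ is surjective and the unit $\mathcal{O}_{H\times W}\to p_*\mathcal{O}_{H\times T}$ is an isomorphism. These two facts make $p$ behave like an epimorphism: two morphisms out of $H\times W$ into a separated scheme that agree after precomposition with $p$ must coincide. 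This gives \emph{uniqueness} at once, for if $a_W$ and $a_W'$ both make $g$ equivariant then $a_W\circ p=g\circ a_T=a_W'\circ p$.

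For \emph{existence}, I would construct $a_W$ as the factorization of $\phi\defeq g\circ a_T\colon H\times T\to W$ through $p$. Here I invoke the contraction (rigidity) lemma: a morphism $\psi\colon A\to Z$ to a separated scheme factors through a proper morphism $q\colon A\to B$ with $q_*\mathcal{O}_A=\mathcal{O}_B$ as soon as $\psi$ is set-theoretically constant on each fiber of $q$, and the factorization is then unique; see \cite{Brion17}. Applying this with $q=p$, it remains to check that $\phi$ contracts the fibers of $p$. The fiber of $p$ over a closed point $(h,w)$ is $\{h\}\times g^{-1}(w)$, and $\phi$ restricted to it is $t\mapsto g(h\cdot t)$; so the required statement is that $g\big(h\cdot g^{-1}(w)\big)$ is a single point for every $h$ and $w$.

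This last point is the crux, and it is where connectedness of $H$ is used. Fix $w\in W$ and consider the morphism $\Psi\colon g^{-1}(w)\times H\to W$, $(t,h)\mapsto g(h\cdot t)$. Because $g$ is proper with $g_*\mathcal{O}_T=\mathcal{O}_W$, the fiber $F\defeq g^{-1}(w)$ is proper and geometrically connected, and over the identity we have $\Psi(F\times\{e\})=g(F)=\{w\}$, a single point. The classical rigidity lemma now applies: using properness of $F$ one finds an open neighborhood of $e$ in $H$ over which $\Psi$ contracts $F$ to a point, and since $H$ is connected this propagates to all of $H$. Hence $g(h\cdot t)$ depends only on $h$ for $t\in F$, which is exactly the contraction of the fibers of $p$ that we needed, and the contraction lemma then yields $a_W\colon H\times W\to W$ with $a_W\circ p=\phi$, i.e.\ $g$ is $H$-equivariant. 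I expect this step to be the main obstacle: one must run the rigidity argument carefully when $H$ is nonreduced (for instance infinitesimal, as typically happens in characteristic $p$), so that it is genuinely scheme-theoretic rather than a check on $k$-points. This is ensured by treating $p$ as a flat family over $H$ and applying the contraction lemma after base change, so that the infinitesimal directions of $H$ are retained.

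It remains to verify that $a_W$ is an action, and this again follows formally from the epimorphism property of $p$. The two morphisms $a_W\circ(m_H\times\id_W)$ and $a_W\circ(\id_H\times a_W)$ from $H\times H\times W$ to $W$ expressing associativity become equal after precomposition with $\id_H\times\id_H\times g$, since both then equal $g\circ a_T\circ(m_H\times\id_T)=g\circ a_T\circ(\id_H\times a_T)$ by associativity of $a_T$; as $\id_H\times\id_H\times g$ is again an epimorphism of the same kind, the two morphisms agree. The identity axiom $a_W\circ(e\times\id_W)=\id_W$ is obtained in the same way by precomposing with $g$. Therefore $a_W$ is the unique $H$-action making $g$ equivariant.
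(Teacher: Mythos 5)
Your proposal concerns a statement that the paper itself does not prove: Theorem \ref{blanchard} is recalled with a citation to \cite[7.2]{Brion17}, so the only meaningful comparison is with the argument of that reference. Your proof is, in structure, exactly that standard argument: base-change to $p=\id_H\times g$ and keep properness and the condition $p_*\mathcal{O}_{H\times T}=\mathcal{O}_{H\times W}$ by flat base change; deduce uniqueness and the action axioms from the schematic dominance of $p$; and obtain existence by descending $\phi = g\circ a_T$ through $p$ via the contraction lemma, whose hypothesis is a purely set-theoretic constancy statement to be established by rigidity. Your remark that the infinitesimal directions of $H$ are carried by the contraction lemma, so that only a topological check is required, is precisely the right mechanism for handling nonreduced $H$.

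Two steps are looser than they should be. (i) The contraction lemma needs constancy on the fibres of $p$ over \emph{every} scheme-theoretic point of $H\times W$, while you verify it only over closed points. This is repaired by a Jacobson/constructibility argument: the locus in $(H\times T)\times_{H\times W}(H\times T)$ where the two composites with $\phi$ land off the diagonal of $W$ is open (here $W$ separated is used), so if its constructible image in $H\times W$ avoids all closed points it is empty. (ii) More seriously, the propagation step ``an open neighbourhood of $e$, plus connectedness of $H$, gives all of $H$'' is not a valid inference on its own: the locus of $h$ for which $\Psi$ contracts $F\times\{h\}$ is indeed open, but an open, dense locus in a connected base need not be everything --- for a general proper surjective family with connected fibres (think of a blow-down) the point-fibre locus is exactly of this form and is proper. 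What saves the argument is the product structure of the source, i.e.\ Mumford's actual rigidity argument: pass to reductions (harmless for a set-theoretic claim; over the perfect field $k$ the group $H_{\mathrm{red}}$ is smooth and connected, hence irreducible, and $F_{\mathrm{red}}\times H_{\mathrm{red}}$ is reduced), fix $t_0\in F(k)$ and set $s(h)\defeq\Psi(t_0,h)$; on $F_{\mathrm{red}}\times V$, where $V=\{h \mid \Psi(F\times\{h\})\subset U\}$ for an affine open $U\ni w$, the map $\Psi$ factors through the projection \emph{as a morphism} because $\mathcal{O}(F_{\mathrm{red}}\times V)=\mathcal{O}(V)$; then the equalizer of $\Psi_{\mathrm{red}}$ and $s\circ pr_{H}$ is closed and contains the dense open subset $F_{\mathrm{red}}\times V$ of a reduced scheme, hence is everything. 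With these two repairs --- both standard --- your proof is complete and agrees with the cited one.
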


The automorphism group $\Aut(X)$ of a projective variety has a canonical structure of a reduced (equivalently, smooth) group scheme, locally of finite type over $k$. With respect to this structure, we denote as $\Aut^0(X)$ its connected component of the identity, which is an algebraic group. Our focus will not be on the group scheme structure, thus we mainly consider it simply as an abstract group. 

\begin{corollary}
    Under the assumptions of Theorem \ref{blanchard}, the morphism $g\colon T\rightarrow W$ induces a homomorphism of algebraic groups
    \[
        g_*\colon \Aut^0(T) \longrightarrow \Aut^0(W),
    \]
    such that for every $f\in \Aut^0(T)$, the following diagram commutes:
    \begin{center}
        \begin{tikzcd}
            T \arrow[rr, "f"] \arrow[d, "g"]&& T\arrow[d, "g"] \\
            W  \arrow[rr, "g_*(f)"] && W. 
        \end{tikzcd}
    \end{center}
\end{corollary}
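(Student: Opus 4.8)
The plan is to apply Blanchard's Lemma (\Cref{blanchard}) directly, taking for the connected group $H$ the identity component $\Aut^0(T)$ itself. First I would record that $\Aut^0(T)$ is, by definition, a connected algebraic group, and that it acts on $T$ through the tautological action $a_T\colon \Aut^0(T)\times T \to T$, $(f,t)\mapsto f(t)$. Since the hypotheses imposed on $g$ in \Cref{blanchard} (proper, with $g_*\mathcal{O}_T = \mathcal{O}_W$) are in force, the lemma produces a \emph{unique} action $a_W\colon \Aut^0(T)\times W \to W$ for which $g$ is $\Aut^0(T)$-equivariant, that is, $g\circ a_T = a_W\circ(\id\times g)$ as morphisms $\Aut^0(T)\times T \to W$.

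Next I would translate the action $a_W$ into a homomorphism of algebraic groups. Because $W$ is projective, its automorphism functor is represented by a group scheme $\Aut(W)$ locally of finite type over $k$, and an action of $\Aut^0(T)$ on $W$ is precisely the datum of a homomorphism of group schemes $g_*\colon \Aut^0(T)\to \Aut(W)$, sending a point $f$ to the automorphism $a_W(f,-)$; here the group-homomorphism property is exactly the associativity axiom of the action. Since $\Aut^0(T)$ is connected and $g_*$ carries the identity to $\id_W$, its image is a connected subgroup through the identity, hence is contained in the identity component $\Aut^0(W)$. This yields the asserted homomorphism $g_*\colon \Aut^0(T)\to \Aut^0(W)$.

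Finally, the commutativity of the square is nothing but a reformulation of the equivariance of $g$: evaluating the identity $g(f\cdot t) = f\cdot g(t)$ gives $g\circ f = g_*(f)\circ g$ as morphisms $T\to W$, which is precisely the claim that the diagram commutes for every $f\in\Aut^0(T)$.

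I expect the only genuinely delicate point to be the passage from a group-scheme action to a homomorphism landing in $\Aut^0(W)$: one must invoke the representability of the automorphism functor for the projective scheme $W$ and then use the connectedness of $\Aut^0(T)$ to force the image into the identity component. Everything else is a direct unwinding of \Cref{blanchard} together with the definition of $\Aut^0$.
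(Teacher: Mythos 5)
Your proposal is correct and is exactly the argument the paper intends: the corollary is stated without proof immediately after Blanchard's Lemma, and the expected derivation is precisely to apply that lemma to the tautological action of the connected algebraic group $\Aut^0(T)$ on $T$, convert the resulting action on $W$ into a homomorphism to the (representable) automorphism group scheme of $W$, and use connectedness to land in $\Aut^0(W)$. You also correctly isolate the one delicate point, namely representability of the automorphism functor of $W$, which the paper addresses in the paragraph preceding the corollary by restricting to projective varieties.
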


Now assume $G$ to be finite over $k$ and let $\vert G \vert$ be its order, which is defined as being the dimension of $\mathcal{O}(G)$ as a $k$-vector space. Then $G$ lies in a unique exact sequence
\begin{align}
\label{etale_connected}
1 \longrightarrow G^0 \longrightarrow G \longrightarrow \pi_0(G) \longrightarrow 1
\end{align}
where $G^0$ is infinitesimal and $\pi_0(G)$ is a finite and constant group. 

\medskip

\begin{lemma}\label{lem:categorical_quotient}
Let $X$ be a $G$-scheme of finite type such that every $G$-orbit is contained in an open affine subset. Then there exists a categorical quotient by $G$,
\[
\rho \colon X \longrightarrow Y\defeq X/G,
\]
where $Y$ is a scheme of finite type. The morphism $\rho$ is finite and surjective, having as set-theoretic fibers the orbits of $G$. If in addition the action is free, $\rho$ is a \emph{(left) $G$-torsor}. Conversely, if the categorical quotient exists and is finite, then $X$ is covered by open affine $G$-stable subsets. 
\end{lemma}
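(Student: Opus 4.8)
The plan is to follow the classical construction of quotients by finite group schemes, reducing to the affine case and then gluing. The crucial preliminary step is to exploit the hypothesis on orbits in order to cover $X$ by $G$-\emph{stable} affine opens. Fix $x\in X$ with orbit $Z\defeq G\cdot x$, a finite set of points contained in an affine open $U$. Since $G$ is finite, the action $a\colon G\times X\to X$ factors as the automorphism $(g,y)\mapsto(g,g\cdot y)$ followed by the finite projection $G\times X\to X$, hence $a$ is finite and in particular closed. Therefore $U'\defeq X\setminus a\bigl(G\times(X\setminus U)\bigr)$ is an open $G$-stable neighbourhood of $Z$ contained in $U$, equal set-theoretically to $\{z: G\cdot z\subseteq U\}$. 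To upgrade $U'$ to an affine $G$-stable open, I would separate $Z$ from the closed set $U\setminus U'$ inside the affine $U$: by prime avoidance there is $h\in\mathcal{O}(U)$ vanishing on $U\setminus U'$ and nonvanishing at every point of $Z$, so that $U_h\defeq D(h)\subseteq U'$ is affine and contains $Z$.

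The heart of the matter is to make $h$ invariant. On the $G$-stable open $U'$ the coaction $\lambda\colon B\to B\otimes_k\mathcal{O}(G)$ with $B\defeq\mathcal{O}(U')$ is available; since $B\otimes_k\mathcal{O}(G)$ is free of rank $|G|$ over $B$, I form the norm $N(h)\defeq N_{B\otimes_k\mathcal{O}(G)/B}(\lambda(h))\in B$, which is $G$-invariant and nonvanishing at a point precisely when $h$ is nonvanishing on the whole scheme-theoretic orbit of that point. As $Z$ is $G$-stable and $h$ is nonzero on $Z$, the element $N(h)$ is nonzero at $x$, and its nonvanishing locus $W\defeq D(N(h))$ is the largest $G$-stable open contained in $U_h$. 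Being a principal open of the affine scheme $U_h$, the open $W$ is itself affine; it is $G$-stable and contains $Z$. This produces the desired covering of $X$ by $G$-stable affine opens, and is the step I expect to be the main obstacle, precisely because passing from the set-theoretic ``product of translates'' to the scheme-theoretic norm requires care when $G$ is nonreduced.

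With the covering in hand, I construct the quotient locally and glue. On a $G$-stable affine $\Spec A$ I set the local quotient to be $\Spec A^G$, where $A^G$ is the equalizer of the coaction and $a\mapsto a\otimes 1$. I would then verify the three standard facts: $A$ is finite over $A^G$ (each element satisfies a monic relation of degree $|G|$ obtained from its characteristic polynomial, i.e.\ its norm), $A^G$ is a finitely generated $k$-algebra by the Artin--Tate lemma, and $\Spec A^G$ is the categorical quotient with $\Spec A\to\Spec A^G$ finite and surjective. Because forming invariants commutes with localization at invariant elements, these local quotients glue to a morphism $\rho\colon X\to Y$ of finite type, finite and surjective. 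That the set-theoretic fibers are the orbits follows from the integrality of $A^G\subseteq A$ (lying over and going up) together with the fact that two distinct orbits, being disjoint closed subsets, are separated by an invariant function, again via the norm.

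It remains to treat the free case and the converse. If the action is free, then $\Stab_G$ is trivial, so the graph morphism $G\times X\to X\times X$, $(g,x)\mapsto(g\cdot x,x)$, is a finite monomorphism, hence a closed immersion, and it factors through $X\times_Y X$ since $\rho$ coequalizes the action. A comparison of fibers, equivalently the fact that freeness makes $A$ a projective $A^G$-module of rank $|G|$ with $A\otimes_{A^G}A\cong A\otimes_k\mathcal{O}(G)$, shows that this closed immersion $G\times X\to X\times_Y X$ is an isomorphism and that $\rho$ is faithfully flat; thus $\rho$ is an fppf $G$-torsor. Conversely, if a finite categorical quotient $\rho\colon X\to Y$ exists, then for any affine open $\Spec B\subseteq Y$ the preimage $\rho^{-1}(\Spec B)$ is affine, since $\rho$ is finite hence affine, and $G$-stable, since $\rho\circ a=\rho\circ p_2$ forces $a^{-1}\bigl(\rho^{-1}(\Spec B)\bigr)=G\times\rho^{-1}(\Spec B)$; these affine $G$-stable opens cover $X$.
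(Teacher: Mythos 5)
Your proposal is correct and is, in substance, the same argument as the paper's: the paper offers no proof of \Cref{lem:categorical_quotient} beyond the citation to \cite[III.2.6.1]{DG}, and your plan reconstructs exactly the classical argument of that reference (and of SGA3, Exp.~V, Th.~4.1) --- the norm along the finite free coaction to shrink an affine open containing an orbit to a $G$-stable affine open, affine quotients $\Spec A^G$ glued along invariant principal opens, integrality of $A$ over $A^G$ giving finiteness, surjectivity and the separation of orbits, and the identification $G\times X\simeq X\times_Y X$ in the free case. The one step you assert rather than argue (that freeness makes $A$ a projective $A^G$-module of rank $\vert G\vert$, i.e.\ the faithful flatness of $\rho$, which is the substantive content of the torsor claim) is precisely what the cited source establishes, so your plan neither diverges from nor falls short of the paper's delegated proof.
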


The above result is \cite[III.2.6.1]{DG} and guarantees the existence of quotients in our case: if $X$ is a $G$-curve, then the categorical quotient always exists.

\begin{lemma}
\label{lin_red}
    Let $G$ be a finite and linearly reductive group. Then the operations of taking quotients and of taking closed $G$-stable subschemes commute.
\end{lemma}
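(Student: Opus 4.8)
The plan is to reduce the statement to an exactness property of the invariants functor. First I would make precise what ``commute'' means here: given a $G$-scheme $X$ whose categorical quotient $\rho\colon X\to X/G$ exists, and a closed $G$-stable subscheme $Z\subseteq X$, the universal property yields a canonical morphism $Z/G\to X/G$, and the assertion is that this identifies $Z/G$ with the scheme-theoretic image of $Z$ in $X/G$. By \Cref{lem:categorical_quotient} the morphism $\rho$ is finite, in particular affine, so the claim is local on $X/G$ and I may assume $X=\Spec A$ with $X/G=\Spec A^G$, and $Z=\Spec(A/I)$ for a $G$-stable ideal $I\subseteq A$. Then $Z/G=\Spec\bigl((A/I)^G\bigr)$, the scheme-theoretic image of $Z$ is $\Spec\bigl(A^G/(I\cap A^G)\bigr)$, and the whole statement reduces to showing that the natural homomorphism
\[
A^G/(I\cap A^G)\longrightarrow (A/I)^G
\]
is an isomorphism.

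Injectivity I would dispatch immediately: an element of $A^G$ becomes zero in $A/I$ exactly when it lies in $I\cap A^G$, so the displayed map is injective with no hypothesis on $G$. The entire content lies in surjectivity, and this is precisely where finite linear reductivity of $G$ enters, through the fact that the functor $V\mapsto V^G$ on rational $G$-representations is exact (equivalently, every finite-dimensional rational $G$-representation is semisimple). Applied to the short exact sequence of $G$-modules
\[
0\longrightarrow I\longrightarrow A\longrightarrow A/I\longrightarrow 0,
\]
this exactness gives that $A^G\to (A/I)^G$ is surjective, which is exactly the surjectivity required above.

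To make the reductivity input rigorous on the generally infinite-dimensional comodule algebra $A$, I would invoke local finiteness of comodules: any element of $(A/I)^G$ lies in a finite-dimensional $G$-submodule $V\subseteq A/I$, and since $A\twoheadrightarrow A/I$ is $G$-equivariant one can choose a finite-dimensional $G$-submodule $W\subseteq A$ surjecting onto $V$ (generate the $G$-submodule spanned by a finite-dimensional set-theoretic lift). Semisimplicity of the finite-dimensional representation $W$ splits the sequence $0\to W\cap I\to W\to V\to 0$, so $W^G\to V^G$ is onto and the chosen invariant lifts into $A^G$. The main obstacle is therefore conceptual rather than computational: the crux is the exactness of invariants, namely the defining property of linear reductivity, while the only technical care needed is the passage from finite-dimensional representations to the comodule algebra $A$, handled by local finiteness. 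I would also note explicitly that finiteness of $\rho$ from \Cref{lem:categorical_quotient} is what legitimizes the affine-local reduction that makes the algebraic reformulation available.
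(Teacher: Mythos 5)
Your proposal is correct and follows essentially the same route as the paper: reduce to the affine case $X=\Spec A$ using the finiteness of the quotient morphism from \Cref{lem:categorical_quotient}, form the short exact sequence $0\to I\to A\to A/I\to 0$ of $G$-modules, and apply exactness of the invariants functor (linear reductivity) to conclude that $A^G\to (A/I)^G$ is surjective, i.e.\ that $Z/G\to X/G$ is a closed immersion. The only difference is presentational: you spell out the local-finiteness argument behind exactness of invariants and phrase the conclusion via the scheme-theoretic image, whereas the paper invokes exactness directly and phrases it as $j$ being a closed immersion.
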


\begin{proof}
    Let $X$ be a scheme equipped with a $G$-action and having a quotient by $G$ that we denote by $\pi \colon X \rightarrow Y$. Let $Z$ be a $G$-stable closed subscheme of $X$. The map $j$ making the diagram below commute
    \begin{center}
        \begin{tikzcd}
            X \arrow[rr, "\pi"] && Y \\
            Z \arrow[u, hookrightarrow] \arrow[rr, 
            ] && Z/G \arrow[u, "j"]
        \end{tikzcd}
    \end{center}
    is well defined; we have to prove that it is a closed immersion.\\
    Since the map $\pi$ is finite, it follows by Lemma \ref{lem:categorical_quotient} that we can cover $X$ by $G$-stable affine open subsets and hence assume $X= \Spec A$ to be affine. Let $I$ be the ideal defining the closed subscheme $Z$, then we have the following short exact sequence of $G$-modules
    \[
    0 \longrightarrow I \longrightarrow A \longrightarrow A/I = \mathcal{O}(Z) \longrightarrow 0.
    \]
    Then one can take invariants, which is exact since $G$ is linearly reductive; this yields
    \[
      0 \longrightarrow I^G \longrightarrow A^G=\mathcal{O}(Y) \stackrel{\psi}{\longrightarrow} \mathcal{O}(Z)^G = \mathcal{O}(Z/G) \longrightarrow 0,
    \]where the map $\psi$ is exactly given by the morphism $j$. Thus the latter is a closed immersion.
\end{proof}


\subsection{Equivariantly normal curves}
We gather here some material on $G$-normal curves, which can be found in \cite{Brion2}; the notion of $G$-normality is introduced in this paper, to deal with the fact that the action of a finite group scheme on a variety does not lift to the normalization in general.
\begin{definition}\cite[Definition 4.1]{Brion2}
    A $G$-variety $X$ is said to be \emph{$G$-normal} if every finite birational morphism of $G$-varieties $f \colon Y \rightarrow X$ is an isomorphism.
\end{definition}

By \cite[Proposition 4.2]{Brion2}, every $G$-variety $X$ admits a \emph{$G$-normalization}, i.e., a $G$-normal variety $X^\prime$ together with a finite, birational, $G$-equivariant morphism $\varphi\colon X^\prime \rightarrow X$, such that for any finite birational morphism $f\colon Z\to X$ of $G$-varieties, there exists a unique $G$-morphism $\psi\colon Z\to X'$ making the following diagram commutative:
\begin{center}
    \begin{tikzcd}
        && X^\prime \arrow[d, "\varphi"]\\
        Z \arrow[urr, "\psi"]\arrow[rr, swap,"f"] && X.
    \end{tikzcd}
\end{center}
By definition, the $G$-normalization is unique up to unique $G$-equivariant isomorphism.

\begin{lemma}[{\cite[Lemma 4.7 and Corollary 4.8]{Brion2}}]
    \label{normalization}
    Let $X$ be a $G$-normal variety. Then its normalization $\tau \colon \widetilde{X} \rightarrow X$ is bijective and purely inseparable; moreover, the quotient $X/G$ is normal.
\end{lemma}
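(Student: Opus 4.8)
The plan is to reduce everything to the case of a \emph{normal} intermediate quotient, exploiting that $k$ is perfect. Since $k=\kbar$, the connected–étale sequence \eqref{etale_connected} splits, so $G\cong G^0\rtimes G_{\mathrm{red}}$ with $G_{\mathrm{red}}\cong \pi_0(G)$ finite constant and $G^0$ infinitesimal. I would set $X_0\defeq X/G^0$ (which exists by \Cref{lem:categorical_quotient}); the quotient map $q\colon X\to X_0$ is finite, surjective and radicial, because $G^0$ is infinitesimal, so that $K(X)/K(X_0)$ is purely inseparable. Moreover $G_{\mathrm{red}}=G/G^0$ acts on $X_0$ with $X_0/G_{\mathrm{red}}=Y$.

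The heart of the argument is to show that $X_0$ is $G_{\mathrm{red}}$-normal. Given a finite birational morphism of $G_{\mathrm{red}}$-varieties $\nu\colon Z\to X_0$, I would \emph{not} work with the fibre product $X\times_{X_0}Z$ (which may be nonreduced or reducible), but instead form the integral scheme $W\defeq \Spec_X\big(\mathcal{O}_X[\nu_*\mathcal{O}_Z]\big)$, obtained inside $K(X)$ as the $\mathcal{O}_X$-subalgebra generated by $\nu_*\mathcal{O}_Z\subseteq K(X_0)\subseteq K(X)$. Then $W\to X$ is finite and birational, and $W$ is a $G$-variety: the action of $G=G^0\rtimes G_{\mathrm{red}}$ on $K(X)$ preserves $\mathcal{O}_X$, acts trivially on $K(X_0)=K(X)^{G^0}$ through $G^0$, and stabilizes $\nu_*\mathcal{O}_Z$ through $G_{\mathrm{red}}$, hence preserves $\mathcal{O}_X[\nu_*\mathcal{O}_Z]$. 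By $G$-normality of $X$, the morphism $W\to X$ is an isomorphism, i.e.\ $\nu_*\mathcal{O}_Z\subseteq \mathcal{O}_X$; intersecting with $K(X_0)$ gives $\nu_*\mathcal{O}_Z\subseteq \mathcal{O}_X\cap K(X_0)=\mathcal{O}_{X_0}$, so $\nu$ is an isomorphism and $X_0$ is $G_{\mathrm{red}}$-normal.

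Once this is established, the remaining steps are formal. Since $G_{\mathrm{red}}$ is constant, its finitely many automorphisms of $X_0$ lift to the normalization $\widetilde{X_0}$ by functoriality, so $\widetilde{X_0}\to X_0$ is a finite birational $G_{\mathrm{red}}$-equivariant morphism and hence an isomorphism; thus $X_0$ is normal. Then $Y=X_0/G_{\mathrm{red}}$ is the quotient of a normal variety by a finite constant group, hence normal, which yields the last assertion. Finally, because $X_0$ is normal and $K(X)/K(X_0)$ is purely inseparable, the normalization $\widetilde{X}$ coincides with the normalization of $X_0$ in $K(X)$, and $\widetilde{X}\to X_0$ is radicial; as universal injectivity of a composite $\widetilde{X}\to X\to X_0$ passes to the first map applied, $\tau\colon\widetilde{X}\to X$ is radicial, hence, being also surjective, bijective and purely inseparable.

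The main obstacle is precisely the step showing that $X_0$ is $G_{\mathrm{red}}$-normal, where $G$-normality of $X$ must be invoked: one cannot lift the $G$-action to the normalization directly—this is the very phenomenon that $G$-normality is designed to handle—and a naive pullback along $q$ need not be integral. The device of constructing the competitor $W$ as an $\mathcal{O}_X$-subalgebra of the function field $K(X)$ is what makes the application of $G$-normality legitimate, and it lets the infinitesimal and étale parts of $G$ be treated in one stroke.
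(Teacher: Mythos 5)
First, a point of order: the paper never proves this lemma; it is imported wholesale from \cite[Lemma 4.7 and Corollary 4.8]{Brion2}, so there is no in-paper argument to measure yours against. Judged on its own terms, your proof is correct and is a genuinely self-contained route: reduce to $X_0\defeq X/G^0$, show that $X_0$ is $\pi_0(G)$-normal, use that a \emph{constant} group action lifts to the normalization to conclude that $X_0$ is normal (hence $Y=X_0/\pi_0(G)$ is normal, invariants of a normal domain being normal), and finish by observing that $\widetilde{X}$ is the normalization of the normal variety $X_0$ in the purely inseparable extension $K(X)/K(X_0)$, hence radicial over $X_0$, hence radicial over $X$. The decisive step is the competitor $W=\Spec_X\bigl(\mathcal{O}_X[\nu_*\mathcal{O}_Z]\bigr)$: testing $G$-normality against a subalgebra generated inside the function field, rather than against the fibre product $X\times_{X_0}Z$, is exactly what circumvents both the failure of actions to lift to normalizations and the possible non-integrality or non-reducedness of that fibre product.

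One step should be tightened, though this is a matter of rigor rather than a genuine gap. Since $G$ is in general non-reduced, ``the action of $G$ on $K(X)$'' must be read as a coaction $\rho\colon K(X)\to \mathcal{O}(G)\otimes K(X)$, and its existence requires justification: on a $G$-stable affine $\Spec A$, the shearing automorphism $(g,x)\mapsto (g,g\cdot x)$ of $G\times \Spec A$ shows that $\rho(f)$ is a non-zero-divisor in $\mathcal{O}(G)\otimes A$ for every $f\neq 0$, hence invertible in the Artinian ring $\mathcal{O}(G)\otimes K(X)$, so the coaction on $A$ localizes uniquely to $K(X)$. Granting this, your remaining claims are standard: $\rho$ preserves $K(X)^{G^0}$ and factors there through $G/G^0\cong G_{\mathrm{red}}$ because $G^0$ is normal in $G$ (so the splitting $G\cong G^0\rtimes G_{\mathrm{red}}$ is not actually needed, only the canonical isomorphism $G_{\mathrm{red}}\cong \pi_0(G)$ over the perfect field $k$); equivariance of $\nu$ gives $\rho(\nu_*\mathcal{O}_Z)\subseteq \mathcal{O}(G)\otimes \nu_*\mathcal{O}_Z$; hence $\mathcal{O}_X[\nu_*\mathcal{O}_Z]$ (with $\nu_*\mathcal{O}_Z$ transported to the space $X$ along the homeomorphism $q$) is a $G$-stable coherent subalgebra of the constant sheaf $K(X)$, so that $W$ is an integral $G$-variety, finite and birational over $X$, and $G$-normality yields $\nu_*\mathcal{O}_Z\subseteq \mathcal{O}_X\cap K(X_0)=\mathcal{O}_{X_0}$, as you say. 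With that justification supplied, the proof stands.
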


\begin{lemma}[{\cite[Corollary 4.14]{Brion2}}]
\label{lci}
    For a $G$-curve $X$, the following conditions are equivalent:
    \begin{itemize}
        \item $X$ is $G$-normal;
        \item the sheaf of ideals $\mathcal{I}_{G\cdot x}$ is invertible for any closed point $x \in X$;
        \item the sheaf of ideals $\mathcal{I}_Z$ is invertible for any closed $G$-stable subscheme $Z$.
    \end{itemize}
    Moreover, every $G$-normal curve is a locally of complete intersection.
\end{lemma}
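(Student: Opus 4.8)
The plan is to prove the two nontrivial implications among the three conditions by reducing everything to blow-ups of $G$-stable centers, and then to deduce the local complete intersection property from the invertibility of the orbit ideals together with \Cref{normalization}. Throughout I use that $G$ is finite, so that the quotient $\pi\colon X\to Y=X/G$ exists and is finite surjective by \Cref{lem:categorical_quotient}, with set-theoretic fibers equal to the orbits, and that each orbit $G\cdot x\cong G/\Stab_G(x)$ is a closed $G$-stable subscheme of $X$. The implication from the third condition to the second is then immediate, since $G\cdot x$ is itself a closed $G$-stable subscheme.

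For \emph{$G$-normal} $\Rightarrow$ \emph{all $G$-stable ideals invertible}, I would argue as follows. Let $Z\subseteq X$ be a proper closed $G$-stable subscheme, so that $Z$ is supported in dimension zero, and consider the blow-up $\beta\colon \mathrm{Bl}_Z X\to X$. Because $X$ is a curve and $Z$ is zero-dimensional, $\beta$ is finite and birational; because $Z$ is $G$-stable, the blow-up inherits a $G$-action by functoriality, making $\beta$ a $G$-equivariant morphism. By $G$-normality, $\beta$ is an isomorphism; since $\mathcal{I}_Z\cdot\mathcal{O}_{\mathrm{Bl}_Z X}$ is invertible by the universal property of the blow-up, $\mathcal{I}_Z$ is invertible on $X$.

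For \emph{orbit ideals invertible} $\Rightarrow$ \emph{$G$-normal}, I would first reduce to showing that the $G$-normalization $\varphi\colon X'\to X$ is an isomorphism: indeed $X'$ is $G$-normal by construction, so if $\varphi$ is an isomorphism then $X$ is $G$-normal as well. Working locally, write $A=\mathcal{O}_{X,x}\subseteq A'=(\varphi_*\mathcal{O}_{X'})_x\subseteq K$, a finite birational $G$-stable extension of one-dimensional local domains, where the orbit ideal $I\subseteq A$ is principal and generated by a nonzerodivisor since it is invertible. The remaining task is to show that such a finite birational $G$-stable extension must be trivial once the orbit ideal is an invertible (hence principal) $G$-stable ideal. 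This local rigidity is where the content lies, and I expect it to be the \textbf{main obstacle}: a bare length count is insufficient (it yields a tautology), so one must exploit the interaction between the $G$-linearization and the invertibility, relying on the local description of the $G$-action on a curve, which in the diagonalizable case is made fully explicit in \cite{diagos}. Equivalently, this amounts to saying that the $G$-normalization is governed precisely by the orbit ideals, so that their invertibility trivializes the relevant blow-ups, and hence $\varphi$.

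Finally, for the local complete intersection property, I would argue locally at a closed point $x$. By the equivalence just established the orbit ideal is locally principal, say $\mathcal{I}_{G\cdot x}=(f)$ with $f$ a nonzerodivisor in the maximal ideal of $A=\mathcal{O}_{X,x}$. The quotient $A/(f)$ is the local ring of the orbit $G\cdot x\cong G/\Stab_G(x)$, a homogeneous space under a finite group scheme and therefore a complete intersection $k$-algebra; since $f$ is a nonzerodivisor, the standard descent of the complete intersection property along a regular element shows that $A$ is itself a complete intersection, and as this holds at every point, $X$ is locally of complete intersection. I would flag the verification that the orbit is a complete intersection scheme as the one delicate point here: it is transparent for diagonalizable $G$ from the explicit local models of \cite{diagos}, and in general rests on the structure of finite group schemes over $k$.
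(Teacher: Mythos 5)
The paper offers no proof of this lemma at all: it is quoted verbatim from \cite[Corollary 4.14]{Brion2}, so your attempt has to stand on its own, and it does not. The implication you label as ``the main obstacle'' --- invertibility of all orbit ideals implies $G$-normality --- \emph{is} the entire content of the statement, and you leave it unproven. Your reduction to showing that the $G$-normalization $\varphi\colon X'\to X$ is an isomorphism is fine, but the sentence meant to finish the argument (``invertibility of the orbit ideals trivializes the relevant blow-ups, and hence $\varphi$'') is circular: $\varphi$ is not known to be a composite of blow-ups of orbit ideals of $X$, and the assertion that the $G$-normalization is ``governed precisely by the orbit ideals'' is logically equivalent to the very implication you are trying to prove (namely, that a nontrivial finite birational $G$-morphism is detected by some non-invertible orbit ideal). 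The appeal to the local models of \cite{diagos} cannot close this gap either, since those only exist for diagonalizable $G$ while the lemma concerns an arbitrary finite group scheme. The parts you do prove are correct but are the easy ones: the third condition trivially implies the second; the blow-up argument for ($G$-normal $\Rightarrow$ all $G$-stable ideals invertible) works, since for a $G$-stable center $Z$ one has $a^{-1}(Z)=pr_X^{-1}(Z)$ and flat base change lifts the action to $\mathrm{Bl}_Z X$; and the l.c.i.\ deduction is sound modulo the fact that orbits of finite group schemes over $k$ are complete intersections, which is a citable consequence of the structure theorem for infinitesimal group schemes over a perfect field (coordinate ring $k[x_1,\dots,x_n]/(x_1^{p^{e_1}},\dots,x_n^{p^{e_n}})$).

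For the record, a completion of the missing implication needs two genuinely new ingredients, neither present in your sketch. First, an orbit $G\cdot x\simeq G/\Stab_G(x)$ is a \emph{simple} $G$-scheme: it has no nonempty proper $G$-stable closed subscheme, because $\pi_0(G)(k)$ permutes its components transitively and the orbit map $G\to G\cdot x$ is faithfully flat and factors through any such subscheme containing the base point. Second, this simplicity upgrades the hypothesis: if $\mathcal{I}_{G\cdot x}=(t)$ is invertible and $\mathcal{J}$ is any nonzero $G$-stable ideal supported on $G\cdot x$, choose $k$ maximal with $\mathcal{J}\subseteq(t^k)$ (finite by Krull's intersection theorem); then $t^{-k}\mathcal{J}$ is a $G$-stable ideal whose image in $\mathcal{O}_{G\cdot x}$ is a nonzero $G$-stable ideal, hence everything by simplicity, which forces $t^{-k}\mathcal{J}=\mathcal{O}_X$, i.e.\ $\mathcal{J}=\mathcal{I}_{G\cdot x}^{\,k}$. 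Applying this to the conductor $\mathfrak{c}$ of a finite birational $G$-morphism $f\colon Y\to X$ (a nonzero $G$-stable ideal of both $\mathcal{O}_X$ and $f_*\mathcal{O}_Y$, supported on finitely many orbits) shows $\mathfrak{c}$ is invertible, whence $f_*\mathcal{O}_Y\subseteq \End_{\mathcal{O}_X}(\mathfrak{c})=\mathcal{O}_X$ and $f$ is an isomorphism. This interplay between homogeneity of orbits and a conductor (or endomorphism-ring) argument is the substance of Brion's proof; without it, your proposal establishes only the chain (1) $\Rightarrow$ (3) $\Rightarrow$ (2) and not the equivalence.
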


\begin{definition}
\label{contracted_product}
    Let $T$ be a right $G$-torsor and $F$ a $G$-scheme such that every $G$-orbit is contained in an open affine subset. Then the \emph{contracted product} of $T$ and $F$ is the scheme
    \[
    T \times^G F \defeq (T \times F)/G, \quad \text{where} \quad g \cdot (t,x) = (tg^{-1}, g\cdot x),
    \]
    which is equipped with two projections:
    \[
    \begin{array}{ccc}
        T\times^G F & \longrightarrow & T/G, \\ 
        T\times^G F & \longrightarrow & F/G.
    \end{array}
    \]
    The first one is a fibration, i.e.\ a morphism locally trivial for the fppf topology, with fiber $F$.
\end{definition}

In particular, in this text we consider the case where $G$ is a subgroup of an algebraic group $G^\flat$. For a subgroup $K \subset G$, we denote $\vert K \vert$ the order of $K$; in other words, the dimension as a $k$-vector space of the corresponding Hopf algebra.

\begin{proposition}[{\cite[Proposition 4.17]{Brion2}}]
\label{regular}
    Let $G$ be a subgroup scheme of a smooth connected algebraic group $G^\flat$. Then the contracted product
    \[
    S = G^\flat \times^G X
    \]
    exists for any $G$-curve $X$. Moreover, the curve $X$ is $G$-normal if and only if $S$ is smooth.
\end{proposition}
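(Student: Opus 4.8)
The statement has two parts --- existence of the quotient and the regularity criterion --- which I would treat separately.

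\emph{Existence.} The plan is to realise $S$ as a categorical quotient via \Cref{lem:categorical_quotient}. The diagonal action $g\cdot(t,x)=(tg^{-1},g\cdot x)$ on $G^\flat\times X$ is free, since $G$ already acts freely on $G^\flat$ by translation; in particular the stabiliser of $(e,x)$ is trivial for every $x$. As the orbits in $G^\flat$ and in $X$ each lie in affine opens and $G^\flat\times X$ is quasi-projective, every orbit of the diagonal action lies in an affine open, so \Cref{lem:categorical_quotient} produces the quotient $\rho\colon G^\flat\times X\to S\defeq(G^\flat\times X)/G$, which is finite and surjective, and moreover a $G$-torsor because the action is free. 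Quotienting only the first factor gives the $G$-torsor $G^\flat\to B\defeq G^\flat/G$, and the first projection $p\colon S\to B$ is the asserted fibration with fibre $X$; it is fppf-locally trivial because pulling back along $G^\flat\to B$ trivialises $\rho$, yielding $S\times_B G^\flat\cong G^\flat\times X$.

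\emph{Smooth $\Rightarrow$ $G$-normal.} Since $k$ is algebraically closed, smooth means regular. Assume $S$ is smooth, hence normal, and suppose for contradiction that $X$ is not $G$-normal. By definition there is a finite birational morphism of $G$-varieties $f\colon Z\to X$ which is not an isomorphism. Applying the functor $G^\flat\times^G(-)$ --- available by the first part, as $Z$ is again a $G$-curve --- produces a morphism $F\colon G^\flat\times^G Z\to S$ with integral source (a contracted product of integral varieties). Base-changing $F$ along the torsor $G^\flat\to B$ recovers $\mathrm{id}_{G^\flat}\times f$; since finiteness and the property of being an isomorphism descend along the faithfully flat map $G^\flat\to B$, and $F$ is birational because $\mathrm{id}_{G^\flat}\times f$ is, the morphism $F$ is finite and birational but not an isomorphism. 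This contradicts the fact that any finite birational morphism from an integral scheme onto a normal variety is an isomorphism. Hence $X$ is $G$-normal.

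\emph{$G$-normal $\Rightarrow$ smooth, and the main obstacle.} For the converse I would argue locally. By \Cref{lci} the hypothesis makes $X$ a local complete intersection with $\mathcal I_{G\cdot x}$ invertible for every $x$; since $p\colon S\to B$ is flat over the smooth base $B$ with lci fibres isomorphic to $X$, the total space $S$ is itself lci, hence Cohen--Macaulay, and regularity at a point $s$ can be tested by the equality $\dim_k T_s S=\dim G^\flat+1$. The delicate point --- and the step I expect to be the main obstacle --- is that neither projection detects this: the fibres of $p$ are copies of the possibly singular curve $X$, while the fibres of $q\colon S\to Y\defeq X/G$ are nonreduced over the points with nontrivial stabiliser (these are precisely the multiple fibres of the elliptic fibration), so $S$ can be regular even though no fibration has smooth fibres. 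To overcome this I would pass to completions and use the finite flat torsor $\rho$: writing $H=\Stab_G(x)$ and trivialising the torsor over a flat neighbourhood, one identifies $\widehat{\mathcal O}_{S,s}$ with a ring of $G$-invariants in which the infinitesimal directions of $G$, namely $\Lie(G)$, cut down the tangent space of the fibre. The resulting tangent space has the expected dimension $\dim G^\flat+1$ precisely when $\mathcal I_{G\cdot x}$ is invertible, that is, exactly under the $G$-normality hypothesis. In the diagonalizable setting this last computation can be made completely explicit through Brion's local normal form for $G$-normal curves in \cite{diagos}, which I would use to finish.
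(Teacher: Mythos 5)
The paper itself contains no proof of this proposition: it is imported verbatim from \cite[Proposition 4.17]{Brion2}, so your attempt has to be measured against Brion's argument rather than anything in the text. Your first two parts are correct. The existence argument (the diagonal action is free because right translation on $G^\flat$ is free; $G^\flat\times X$ is quasi-projective, so every finite orbit lies in an affine open; then \Cref{lem:categorical_quotient} applies and the quotient is a $G$-torsor, fppf-locally trivial over $B=G^\flat/G$) is the standard construction. Your proof that smoothness of $S$ forces $G$-normality of $X$ --- push a finite birational $G$-morphism $Z\to X$ through the functor $G^\flat\times^G(-)$, transfer finiteness, birationality and non-isomorphy across the fppf cover $G^\flat\to B$, and contradict the fact that a finite birational morphism from an integral scheme onto a normal variety is an isomorphism --- is also sound.

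The gap is exactly where you flag it, and it is genuine: the implication ``$X$ $G$-normal $\Rightarrow$ $S$ smooth'' is never proved. Your reduction to a tangent-space count is fine ($S$ is lci and regularity is an embedding-dimension condition), but the assertion that the tangent space has dimension $\dim G^\flat+1$ ``precisely when $\mathcal{I}_{G\cdot x}$ is invertible'' \emph{is} the hard direction, and the completion-and-invariants sketch does not establish it; moreover, for the groups at stake taking $G$-invariants is not exact, and your proposed rescue --- the local normal form of \cite{diagos} --- exists only for diagonalizable $G$, whereas the proposition is stated and used in this paper for arbitrary finite subgroup schemes: $G=\Z/p\Z$ or $\boldsymbol{\alpha}_p$ (for $\mathsf{A}_0$, \Cref{A0}), and $G=E[2]$ with $E$ supersingular in characteristic $2$, a non-diagonalizable unipotent infinitesimal group (\Cref{lem:A_1_supersingular}). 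So your argument collapses in precisely the cases the paper needs. The way to close it, using only what the paper already quotes, is divisorial rather than tangential. By \Cref{lci}, $G$-normality means $\mathcal{I}_{G\cdot x}$ is invertible for every closed point $x$. The $G$-stable closed subscheme $G^\flat\times G\cdot x\subset G^\flat\times X$ descends along the fppf torsor $q\colon G^\flat\times X\to S$ to a closed subscheme $D_x\subset S$ whose ideal pulls back to $\mathcal{I}_{G^\flat\times G\cdot x}$; since invertibility is fppf-local, $D_x$ is an effective Cartier divisor. But $D_x\cong G^\flat/\Stab_G(x)$ is a homogeneous space, hence smooth, and a Noetherian local ring is regular as soon as some nonzerodivisor in its maximal ideal cuts out a regular quotient. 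Every closed point of $S$ lies on some $D_x$, so $S$ is regular, hence smooth as $k$ is algebraically closed. This works for any finite $G$, with no tangent-space computation; note that the same mechanism also gives your second part in one line (if $S$ is smooth it is locally factorial, so the prime divisor $D_x$ is Cartier, and pulling back along $q$ makes $\mathcal{I}_{G\cdot x}$ invertible, whence $X$ is $G$-normal by \Cref{lci}).
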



\subsection{Local description in the diagonalizable case}
\label{sec:local}

Let us assume the group $G$ to be diagonalizable in this section; the following construction is due to \cite[Section 5]{diagos}. Let us consider a $G$-normal curve $X$; then to every non-free point $x\in X(k)$ we associate the subgroup
\[
H(x) \defeq \Stab_G(x) \subset G,
\]
together with a weight $\nu(x)$ of $G$ such that its restriction to $H(x)$ generates the character group of $H$. Moreover, let $n(x)$ denote the order of $H(x)$. Then, let 
\[
U(x) \defeq X_{\text{fr}} \cup \{ z \in X \, \vert \, H(z) = H(x) \text{ and } \nu(z) = \nu(x) \}.
\]
In particular, $X$ is the union of all such open $G$-stable subsets (in higher dimensions, the equality only holds in codimension one). By \cite[Theorem 1]{diagos}, we have the following: the group $H(x)$ is cyclic; moreover, on the open $G$-stable subset $U= U(x)$, the quotient morphism factors through
\begin{center}
    \begin{tikzcd}
            U \arrow[rr, "\varphi"] && U/H \arrow[rr, "\psi"] && U/G
    \end{tikzcd}
\end{center}
where $\varphi$ is a cyclic cover of degree $\vert H \vert$, and $\psi$ is a $G/H$-torsor. A $G$-normal curve $X$ is said to be \emph{uniform} if it can be obtained as the Zariski closure of some $U(x)$ as above. Moreover, by \cite[Proposition 6.5]{diagos} we have the following: if the Picard group of $Y$ has no $\vert G\vert$-torsion, then there is a bijection between uniform $G$-curves over $Y$ and reduced effective divisors on $Y$ with class divisible by $\vert G\vert$.

\subsection{Picard groups}
For a variety $X$, we denote by $\Pic(X)$ the group of isomorphism classes of line bundles over $X$. Let us recall a few facts concerning the Picard scheme of a projective variety; see \cite[Sections 4 and 5]{Kleiman}. 

\begin{definition}
    Let $B$ be a locally Noetherian scheme: we consider the \emph{relative Picard functor} of a $B$-scheme $Z$, is defined as 
    \[
    \Pic_{Z/B} \colon T \longmapsto \Pic(Z\times T) /\Pic(T),
    \]
    where $T$ is a $B$-scheme. If the associated sheaf for the \textit{fppf} topology is representable by a scheme, we denote it as $\underline{\Pic}_{Z/B}$ and call it the (relative) Picard scheme of $Z$ over $B$. If $B= \Spec k$, we denote it simply as $\underline{\Pic}_Z$ and call it the \emph{Picard scheme} of $Z$.
\end{definition}

\begin{theorem}
\label{Pic_curve}
    If $Z$ is projective over $k$, then the Picard scheme exists and it is locally of finite type. If moreover $Z$ is smooth, then the neutral component $\underline{\Pic}_Z^0$ of the Picard scheme is projective. 
    
    If $Z=C$ is a projective curve, then $\underline{\Pic}_C$ is smooth. If moreover $C$ is of genus at least $1$, then there is a natural closed embedding
    \[
    \begin{array}{ccc}
    C & \longhookrightarrow & \underline{\Pic}^1_C \\
    x & \longmapsto & \mathcal{O}_C(x).
    \end{array}
    \]
\end{theorem}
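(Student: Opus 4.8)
The plan is to dispatch the four assertions one at a time, each by reducing to a standard result in Grothendieck's theory of the Picard scheme as presented in \cite{Kleiman}. The existence of $\underline{\Pic}_Z$ as a scheme locally of finite type, for $Z$ projective over $k$, is Grothendieck's representability theorem: one realizes line bundles through their associated effective divisors, embeds these into the relative Hilbert scheme, and uses projectivity to bound Hilbert polynomials, exhibiting $\underline{\Pic}_Z$ as an increasing union of quasi-projective open pieces. I would simply invoke the corresponding statement of \cite{Kleiman} rather than reproduce the construction.

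For the projectivity of $\underline{\Pic}_Z^0$ when $Z$ is smooth, I would first recall that the identity component of a group scheme locally of finite type over a field is itself of finite type. Properness then follows from the valuative criterion: given a discrete valuation ring $R$ with fraction field $K = \Frac(R)$ and a line bundle on $Z_K$, the regularity of $Z \times \Spec R$, a consequence of the smoothness of $Z$, lets one extend the corresponding Cartier divisor across the closed fiber, so the line bundle extends to $Z_R$. Hence $\underline{\Pic}_Z^0$ is proper; since $k$ is perfect, its reduction is a smooth connected proper group variety, that is, an abelian variety, and thus projective. Projectivity of the full (possibly non-reduced) scheme then follows because an ample line bundle on the reduction lifts to $\underline{\Pic}_Z^0$.

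For the smoothness of $\underline{\Pic}_C$ when $C$ is a projective curve, I would appeal to the deformation-theoretic description of the Picard functor: its tangent space at any point is $H^1(C, \mathcal{O}_C)$, and the obstructions to smoothness lie in $H^2(C, \mathcal{O}_C)$. As $C$ is one-dimensional, $H^2(C, \mathcal{O}_C) = 0$, so the obstruction space vanishes and $\underline{\Pic}_C$ is smooth; this is exactly the curve case of Kleiman's smoothness criterion, and it is insensitive to singularities of $C$.

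Finally, for the closed embedding when $g(C) \geq 1$, the Abel--Jacobi morphism $x \mapsto \mathcal{O}_C(x)$ is the classifying map of the line bundle $\mathcal{O}_{C \times C}(\Delta)$ on $C \times C$, obtained from the universal property of $\underline{\Pic}_C$. I would verify that it is a proper monomorphism and conclude, since a proper monomorphism is a closed immersion. Injectivity on $k$-points holds because $\mathcal{O}_C(x) \cong \mathcal{O}_C(y)$ with $x \neq y$ would produce a nonconstant rational function with a single simple pole, hence a degree-one morphism $C \to \proj^1$, contradicting $g(C) \geq 1$. Injectivity of the differential follows from $h^0(C, \mathcal{O}_C(x)) = 1$ for every $x$, again a consequence of $g(C) \geq 1$, which by Serre duality makes the evaluation $H^0(C, \omega_C) \to \omega_C \otimes k(x)$ surjective, so the tangent map to $\underline{\Pic}^1_C$ is injective; combined with properness of $C$ this yields a closed immersion. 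The only genuinely delicate points are the valuative-criterion extension of divisors, which truly uses smoothness of $Z$, and the characteristic-$p$ verification that the Abel--Jacobi map is unramified; everything else is formal once the representability of \cite{Kleiman} is granted.
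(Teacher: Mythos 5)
The paper offers no proof of this statement at all: it is a recollection, justified immediately after by the citation to \cite[Corollary 4.18.4, Theorem 5.4, Remark 5.26]{Kleiman}, and your sketches reproduce exactly the standard arguments behind those cited results (Grothendieck's representability via Hilbert schemes, the valuative criterion plus the abelian-variety structure of the reduction, the vanishing of the obstruction space $H^2(C,\mathcal{O}_C)$, and the Abel map analyzed as a proper unramified monomorphism), so the approaches coincide. The one step you state too quickly is the lifting of an ample line bundle from $(\underline{\Pic}_Z^0)_{\mathrm{red}}$ to the possibly non-reduced $\underline{\Pic}_Z^0$ --- such lifts are obstructed by classes in a coherent $H^2$ in general --- but this is repaired by lifting a $p$-power of the ample class (the obstruction lives in a $k$-vector space, hence is killed by $p$, which is the relevant case since in characteristic zero $\underline{\Pic}_Z^0$ is automatically reduced), or by invoking quasi-projectivity of group schemes of finite type over a field.
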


The above statement can be found in \cite[Corollary 4.18.4, Theorem 5.4, Remark 5.26]{Kleiman}. We denote as $\Pic^0(X)$ the connected component of the identity of the Picard group. The Néron-Severi group is the quotient $\mathrm{NS}(X)\defeq \Pic(X)/\Pic^0(X)$ and is a finitely generated abelian group; its rank is called the \emph{Picard rank} of $X$ and is denoted by $\rho(X)$.

\begin{lemma}
\label{Liu_unipotent}
    Let $X$ be a $G$-normal curve with $G$ infinitesimal and let $\tau \colon \widetilde{X} \rightarrow X$ be its normalization. Then the map $\tau^* \colon \underline{\Pic}^0_X \rightarrow \underline{\Pic}^0_{\widetilde{X}}$
    is surjective, with kernel a unipotent group. 
\end{lemma}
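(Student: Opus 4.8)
The plan is to compare units on $X$ and on its normalization by means of the short exact sequence of sheaves of abelian groups on $X$
\[
1 \longrightarrow \mathcal{O}_X^* \longrightarrow \tau_*\mathcal{O}_{\widetilde X}^* \longrightarrow \mathcal{F} \longrightarrow 1, \qquad \mathcal{F}\defeq \tau_*\mathcal{O}_{\widetilde X}^*/\mathcal{O}_X^*.
\]
By \Cref{normalization}, the morphism $\tau$ is finite, birational and bijective, hence an isomorphism away from the finite non-smooth locus $\Sigma \subset X$; therefore $\mathcal{F}$ is a skyscraper sheaf supported on $\Sigma$, with stalk $\mathcal{F}_x = \widetilde{\mathcal{O}}_x^*/\mathcal{O}_{X,x}^*$ at each $x\in\Sigma$, where $\widetilde{\mathcal{O}}_x$ denotes the (discrete valuation) local ring of the unique point of $\widetilde X$ lying over $x$. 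First I would upgrade this to an exact sequence of \textit{fppf} sheaves, representing the quotient by a linear algebraic group $\mathbb{U}$ and identifying it with the kernel of $\tau^*$; representability and smoothness of $\underline{\Pic}_X$ and $\underline{\Pic}_{\widetilde X}$ are given by \Cref{Pic_curve}, and the exact sequence is the standard one from the theory of generalized Jacobians.

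For surjectivity I would argue as follows. Since $\tau$ is finite (hence affine) one has $R^i\tau_* = 0$ for $i>0$, so $H^i(X,\tau_*\mathcal{O}_{\widetilde X}^*)=H^i(\widetilde X,\mathcal{O}_{\widetilde X}^*)$; moreover $H^1(X,\mathcal{F})=0$ because $\mathcal{F}$ is a skyscraper. The long exact cohomology sequence then yields that $\tau^*\colon \Pic(X)\to\Pic(\widetilde X)$ is surjective, with kernel $H^0(X,\mathcal{F})=\bigoplus_{x\in\Sigma}\mathcal{F}_x$ (the image of the global units $k^*=H^0(\widetilde X,\mathcal{O}_{\widetilde X}^*)$ in $\mathcal{F}$ being zero, as constants are already invertible on $X$). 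On the level of schemes this says that $\mathbb{U}\defeq\prod_{x\in\Sigma}\bigl(\widetilde{\mathcal{O}}_x^*/\mathcal{O}_{X,x}^*\bigr)$ is the kernel of $\tau^*$. Being connected, $\mathbb{U}$ is contained in $\underline{\Pic}^0_X$, and a surjective homomorphism of group schemes maps the neutral component onto the neutral component; hence $\tau^*$ restricts to a surjection $\underline{\Pic}^0_X\to\underline{\Pic}^0_{\widetilde X}$ with kernel $\mathbb{U}$.

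The crux is to show that $\mathbb{U}$ is unipotent, and this is where the bijectivity of $\tau$ enters in an essential way. Fixing $x\in\Sigma$, the rings $\mathcal{O}_{X,x}\subset\widetilde{\mathcal{O}}_x$ share the same fraction field (as $\tau$ is birational) and the same residue field $k$ (as $\tau$ is bijective, so $x$ is geometrically unibranch). Let $\mathfrak{c}$ be the conductor; using $1+\mathfrak{c}\subseteq \mathcal{O}_{X,x}^*\cap\widetilde{\mathcal{O}}_x^*$, I would identify
\[
\widetilde{\mathcal{O}}_x^*/\mathcal{O}_{X,x}^* \;\cong\; (\widetilde{\mathcal{O}}_x/\mathfrak{c})^*/(\mathcal{O}_{X,x}/\mathfrak{c})^*
\]
as group schemes, now attached to finite-dimensional local $k$-algebras. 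Each such unit group decomposes functorially as $\Gm$ (the residue-field units) times the unipotent group $1+(\text{maximal ideal})$; since there is a \emph{single} branch, the two copies of $\Gm=k^*$ match under the reduction to the common residue field $k$ and cancel in the quotient. What remains is $(1+\widetilde{\mathfrak{m}}/\mathfrak{c})/(1+\mathfrak{m}/\mathfrak{c})$, a quotient of smooth connected unipotent groups (each filtered by vector groups $\widetilde{\mathfrak{m}}^i/\widetilde{\mathfrak{m}}^{i+1}\cong\Ga$), hence unipotent; taking the product over $\Sigma$ shows $\mathbb{U}$ is unipotent.

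I expect the main obstacle to be making the identification of $\mathbb{U}$ with the kernel of $\tau^*$ \emph{functorially} precise, that is over arbitrary (possibly non-reduced) test schemes. This matters because unipotence must be verified at the level of group schemes and not merely on $k$-points, where infinitesimal subgroups such as $\boldsymbol{\alpha}_p$ and $\boldsymbol{\mu}_p$ are invisible. It is precisely the universal homeomorphism property of $\tau$ from \Cref{normalization} that does the real work: it forces the singularities of $X$ to be unibranch, thereby eliminating the toric part of the local contributions and leaving only groups of the form $1+(\text{nilpotent ideal})$, which are unipotent including their infinitesimal structure.
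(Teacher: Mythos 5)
Your argument is correct, and it is essentially an unpacking of the proof of the result that the paper simply cites: the paper's own proof of \Cref{Liu_unipotent} is two lines long, invoking \cite[Lemma 7.5.18]{Liu} (i.e.\ the theory of generalized Jacobians) together with the single observation that, by \Cref{normalization}, $\tau$ is bijective, so the curve with ordinary singularities associated to $X$ coincides with $\widetilde{X}$ and the toric part of the kernel disappears. Your conductor computation --- the units sequence, the identification of the kernel with $\prod_{x\in\Sigma}\widetilde{\mathcal{O}}_x^*/\mathcal{O}_{X,x}^*$, the splitting $A^*\cong \Gm\times(1+\mathfrak{m}_A)$ for a finite local $k$-algebra with residue field $k$, and the cancellation of the two $\Gm$ factors because there is a single branch --- is exactly the mechanism inside Liu's lemma, and your key point (bijectivity $\Rightarrow$ unibranch $\Rightarrow$ no torus) is the same one the paper exploits. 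What your version buys is self-containedness, and it makes transparent where $G$-normality actually enters; what it does not supply is the step you yourself flag, namely establishing the sequence $1\to \UU \to \underline{\Pic}^0_X\to \underline{\Pic}^0_{\widetilde{X}}\to 1$ at the level of representable \textit{fppf} sheaves rather than on $k$-points --- but since you delegate that to the standard theory of generalized Jacobians, which is precisely what the citation to \cite{Liu} covers, this is a presentational difference rather than a gap. Two small confirmations: your surjectivity-of-neutral-components step does go through, but it uses smoothness of both Picard schemes, which the paper provides in \Cref{Pic_curve}; and your exact sequence recovers the dimension count $\dim\ker\tau^*=p_a(X)-p_a(\widetilde{X})$ recorded at the end of the paper's proof.
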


\begin{proof}
    This is a special case of \cite[Lemma 7.5.18]{Liu}, applied to the $G$-normal curve $X$, for which $\tau$ is a bijective morphism by \Cref{normalization}. The (reduced and projective) curve with ordinary singularities associated to $X$ is isomorphic to the smooth curve $\widetilde{X}$. Thus, the kernel of $\tau^*$ is unipotent of dimension $p_a(X)-p_a(\widetilde{X})$.
\end{proof}


\section{Isotrivial elliptic surfaces and their Betti numbers}\label{Section:ellipticsurfaces}

\subsection{Elliptic ruled surfaces}
\label{section:ruled}

\begin{definition}
\begin{enumerate}
    \item A \emph{geometrically ruled surface} is a surface $S$ equipped with a surjective morphism $\pi\colon S\to C$ to a smooth curve $C$ such that each fiber is isomorphic to $\proj^1$. 
    \item If moreover there exists a morphism $S\to D$ to a smooth curve $D$ such that the generic fiber is a smooth curve of genus one, we say that $S$ is an \emph{elliptic ruled surface}.
\end{enumerate}
\end{definition}

\medskip
For every geometrically ruled surface $S$, there exists a rank-$2$ vector bundle $\mathcal{E}$ such that $\proj(\mathcal{E}) = S$ (see e.g. \cite[V. Proposition 2.2]{Hartshorne}). Therefore, the fibration $\pi\colon S\to C$ is locally trivial and we say that $\pi$ is a $\proj^1$-bundle over $C$.

A $\proj^1$-bundle $\proj(\mathcal{E})$ admits two disjoint sections if and only if $\mathcal{E}$ is \emph{decomposable}, i.e.\ if and only if $\mathcal{E}$ is isomorphic to the sum of two line bundles. In that case, we also say that $\proj(\mathcal{E})$ is decomposable; else, it is \emph{indecomposable}. By \cite[Theorem 11]{Atiyah}, there exist exactly two indecomposable $\proj^1$-bundles up to isomorphism over an elliptic curve $E$, which we denote by $\mathsf{A}_0$ and $\mathsf{A}_1$. They are respectively obtained by the projectivization of the indecomposable rank-$2$ vector bundles $\mathcal{E}_{2,0}$ and $\mathcal{E}_{2,1}$, which are of degree zero and one, and which fit into the short exact sequences
\[
\begin{array}{ccccccccc}
    0 & \to &\mathcal{O}_E & \to & \mathcal{E}_{2,0} & \to & \mathcal{O}_E & \to & 0, \\
     0 & \to &\mathcal{O}_E & \to & \mathcal{E}_{2,1} & \to & \mathcal{O}_E(z) & \to & 0;
\end{array}
\]
where $z\in E$, and the isomorphism class of $\mathsf{A}_1$ is independent of the choice of $z$.

\begin{remark}\label{rem:elliptic_ruled_surface}
Elliptic ruled surfaces are classified in \cite[Theorem 4]{Maruyama}; see also \cite[Propositions 2.10 and 2.15]{TogashiUehara}. A ruled surface $\pi\colon S\to E$ over an elliptic curve is an elliptic ruled surface if and only if $S$ is isomorphic to one of the following:
\begin{enumerate}
    \item \label{case.i} $\proj(\mathcal{O}_E\oplus \mathcal{L})$ for some $\mathcal{L}\in \Pic(E)$ of finite order,
    \item \label{case.ii} $\mathsf{A}_0$,
    \item \label{case.iii} $\mathsf{A}_1$.
\end{enumerate}

Over an algebraically closed field of characteristic $0$, the ruled surface $\mathsf{A}_0$ is not elliptic. The ruled surfaces $\proj(\mathcal{O}_E\oplus \mathcal{L})$ and $\mathsf{A}_0$ are elliptic if and only if the neutral components of their automorphism groups are anti-affine: see \cite[Examples 1.2.3 and 4.2.4]{BSU}. From the point of view of birational geometry, this can be understood as follows. The cone of curves of such a surface is two-dimensional: one extremal ray corresponds to the structural morphism of the $\proj^1$-bundle, while the other one corresponds to the contraction of the numerical class of a minimal section. However, every curve lying in one of these surfaces intersects a minimal section of self-intersection zero; see e.g.\ the proof of \cite[Lemma 2.14]{fong}. This excludes the existence of a contraction from these surfaces to a curve, other than the two structural rulings.

\medskip

By the results of \cite[Theorem 1.1]{TogashiUehara} (see also Remark 4.4 of \emph{loc.cit.}), the surfaces considered in \cite[Examples 4.7, 4.8, 4.9]{Katsuro_Ueno} are isomorphic to the elliptic ruled surfaces of \Cref{rem:elliptic_ruled_surface} (\ref{case.i}) and (\ref{case.ii}). 

We recall the constructions of Katsuro and Ueno. In case (\ref{case.i}), when $\mathcal{L}$ is of order $p$, the surface is isomorphic to $F\times^{\boldsymbol{\mu}_p} \proj^1$, where $F$ is an elliptic curve such that $F/\boldsymbol{\mu}_p = E$. In the next lemma, we recover this construction for any line bundle $\mathcal{L}$ of finite order. In case (\ref{case.ii}), the surface is isomorphic to $F\times^{G} \proj^1$, with $E= F/G$ and $G =\Z/p\Z$ or $\boldsymbol{\alpha}_p$, depending whether the base elliptic curve is ordinary or supersingular.
\end{remark}

\begin{lemma}\label{lem:dec_are_contracted}
    Let $E$ be an elliptic curve and $\mathcal{L} \in \Pic(E)$ of order $n\geq 2$. There exists an elliptic curve $F$ and an action of $\boldsymbol{\mu}_n$ on $F\times \proj^1$ such that \[
    \proj(\mathcal{O}_E \oplus \mathcal{L}) = F\times^{\boldsymbol{\mu}_n} \proj^1 \quad \text{and} \quad E=F/\boldsymbol{\mu}_n.
    \]
\end{lemma}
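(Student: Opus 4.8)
The plan is to realize $F$ as an elliptic curve isogenous to $E$, built directly from the torsion line bundle $\mathcal{L}$, and then to recognize $\proj(\mathcal{O}_E\oplus\mathcal{L})$ as the projectivization of the rank-two bundle associated to the resulting $\boldsymbol{\mu}_n$-torsor. Since $\mathcal{L}$ has finite order, it has degree zero, hence lies in $\Pic^0(E)$, which I identify with the dual elliptic curve $\hat{E}$; there it generates a cyclic constant subgroup $\Z/n\Z\subset\hat{E}$. Dualizing the isogeny $\hat{E}\to\hat{E}/(\Z/n\Z)$ and using that the Cartier dual of $\Z/n\Z$ is $\boldsymbol{\mu}_n$, I obtain an isogeny $\phi\colon F\to E$ from an elliptic curve $F$ whose kernel is $\boldsymbol{\mu}_n$. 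By construction $E=F/\boldsymbol{\mu}_n$, the subgroup $\boldsymbol{\mu}_n\subset F$ acting by translation, and $\phi$ is a $\boldsymbol{\mu}_n$-torsor. Framing $F$ this way has the advantage that smoothness is automatic, even when $p\mid n$ and $\boldsymbol{\mu}_n$ is nonreduced: $F$ is an abelian variety, so $\phi$ is simply a (possibly inseparable) isogeny of smooth curves. As a consistency check, decomposing $\phi_*\mathcal{O}_F=\bigoplus_{i=0}^{n-1}\mathcal{M}_i$ into $\boldsymbol{\mu}_n$-eigensheaves exhibits each $\mathcal{M}_i$ as a degree-zero line bundle, so $\chi(\mathcal{O}_F)=0$ and $g(F)=1$.

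Next I would let $\boldsymbol{\mu}_n$ act on $\proj^1=\proj(W)$ with $W=W_0\oplus W_1$, the two lines carrying the weights $0$ and $1$, and take the diagonal action on $F\times\proj^1$. The contracted product $F\times^{\boldsymbol{\mu}_n}\proj^1$ exists and is smooth by \Cref{regular} (with $G=\boldsymbol{\mu}_n\subset G^\flat=F$ and the $\boldsymbol{\mu}_n$-normal curve $\proj^1$). The standard compatibility between contracted products and projectivization gives $F\times^{\boldsymbol{\mu}_n}\proj^1=\proj\!\left(F\times^{\boldsymbol{\mu}_n}W\right)$, and since $W$ splits into weight spaces the associated rank-two bundle splits as $\left(F\times^{\boldsymbol{\mu}_n}W_0\right)\oplus\left(F\times^{\boldsymbol{\mu}_n}W_1\right)=\mathcal{O}_E\oplus\mathcal{M}_1$, where $\mathcal{M}_1$ is the line bundle attached to the generating character of $\boldsymbol{\mu}_n$. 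Identifying the torsor class with $\mathcal{L}$ via the Kummer isomorphism $H^1_{\mathrm{fppf}}(E,\boldsymbol{\mu}_n)\cong\Pic(E)[n]$ then yields $\mathcal{M}_1=\mathcal{L}$, so that $F\times^{\boldsymbol{\mu}_n}\proj^1\cong\proj(\mathcal{O}_E\oplus\mathcal{L})$.

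The main obstacle is bookkeeping the duality and weight conventions so that the eigensheaf attached to the generating character is exactly $\mathcal{L}$ rather than some other generator of $\langle\mathcal{L}\rangle$. I expect the connecting map of the Kummer sequence to match the associated-bundle construction only up to a sign, so the computation may a priori produce $\mathcal{L}^{\pm1}$; this ambiguity is harmless, since $\proj(\mathcal{O}_E\oplus\mathcal{L}^{-1})\cong\proj(\mathcal{O}_E\oplus\mathcal{L})$ after twisting by $\mathcal{L}$, and it can also be absorbed by replacing the weights $(0,1)$ with $(0,-1)$. The remaining verifications—that $\phi$ is genuinely a $\boldsymbol{\mu}_n$-torsor because $\mathcal{L}\in\Pic^0(E)$ makes it an isogeny, and that the eigensheaf decomposition of $\phi_*\mathcal{O}_F$ has $\mathcal{M}_0=\mathcal{O}_E$ with the algebra structure forcing $\mathcal{M}_i=\mathcal{M}_1^{\otimes i}$—are routine once this framework is in place.
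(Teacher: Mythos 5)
Your proposal is correct, and its backbone coincides with the paper's proof: both arguments construct $F$ by dualizing the quotient of $\Pic^0(E)$ by the constant subgroup $\langle\mathcal{L}\rangle\cong\Z/n\Z$, so that $\phi\colon F\to E$ is a $\boldsymbol{\mu}_n$-torsor with $E=F/\boldsymbol{\mu}_n$, and both then recognize $\proj(\mathcal{O}_E\oplus\mathcal{L})$ as the bundle associated to this torsor and a two-dimensional $\boldsymbol{\mu}_n$-representation with weights $0$ and a generator of $X^*(\boldsymbol{\mu}_n)$. Where you genuinely diverge is the treatment of the action on $\proj^1$: the paper starts from the projective action $\boldsymbol{\mu}_n\subset\PGL_2$, passes to the preimage $\widetilde{\boldsymbol{\mu}_n}\subset\GL_2$, and must prove that this central extension of $\boldsymbol{\mu}_n$ by $\Gm$ splits (two separate $\mathrm{Ext}^1$-vanishing computations, one for the prime-to-$p$ part and one for $\boldsymbol{\mu}_{p^r}$) before it can speak of the weight decomposition $k_0\oplus k_\alpha$; you instead take the linear action on $W_0\oplus W_1$ as the starting datum, so the lifting and splitting issues never arise and the weight decomposition is built in. This is legitimate and more elementary, since the lemma only asserts the existence of \emph{some} $\boldsymbol{\mu}_n$-action; what the paper's detour buys is the extra fact that the natural projective action linearizes, which your formulation does not need. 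Conversely, your identification step is more careful than the paper's: the paper simply asserts $\mathcal{L}\cong F\times^{\widetilde{\boldsymbol{\mu}_n}}k_\alpha$, whereas you justify $\mathcal{M}_1\cong\mathcal{L}$ via the Kummer isomorphism $H^1_{\mathrm{fppf}}(E,\boldsymbol{\mu}_n)\cong\Pic(E)[n]$ and correctly note that the residual sign ambiguity is harmless, being absorbed by $\proj(\mathcal{O}_E\oplus\mathcal{L}^{-1})\cong\proj(\mathcal{O}_E\oplus\mathcal{L})$ or by replacing the weight $1$ by $-1$. One small remark: a priori the ambiguity could be any generator $\mathcal{L}^a$ with $\gcd(a,n)=1$ rather than just $\mathcal{L}^{\pm 1}$, depending on how one normalizes the isomorphism $\ker(\phi)\cong\boldsymbol{\mu}_n$; but this too is absorbed by replacing the weight $1$ by the inverse unit $a'$ (which still generates $X^*(\boldsymbol{\mu}_n)$ and still gives a faithful action), so no convention-chasing is actually required.
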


\begin{proof}
    Since $\Pic^0(E)=E$ admits an element of order $n\geq 2$, either $E$ is ordinary or $p$ does not divide $n$. This implies that $E[n]=(\Z/n\Z)\times \boldsymbol{\mu}_n$, where the constant cyclic group of order $n$ is generated by $\mathcal{L}$.  
    We set $F = E/(\Z/n\Z)$, which is also an elliptic curve; by construction the following is exact:
    \[
    0 \longrightarrow \Z/n\Z = \langle \mathcal{L} \rangle \longhookrightarrow E= \Pic^0(E) \longrightarrow F = \Pic^0(F) \longrightarrow 0.
    \]
    By duality, this yields a short exact sequence
    \[
    0 \longrightarrow \boldsymbol{\mu}_n \longrightarrow F \overset{\iota}{\longrightarrow} E \longrightarrow 0.
    \] 
    
    Next, let us consider the following commutative diagram:
    \begin{center}
    \begin{tikzcd}
    1 \arrow[r, rightarrow] & \Gm \arrow[r,rightarrow] & \GL_2 \arrow[r,rightarrow] & \PGL_2 \arrow[r,rightarrow] & 1 
    \\
    1 \arrow[r, rightarrow] & \Gm \arrow[r,rightarrow] \arrow[u, rightarrow] & \widetilde{\boldsymbol{\mu}_n} \arrow[r,rightarrow] \arrow[u, rightarrow] & \boldsymbol{\mu}_n \arrow[r,rightarrow] \arrow[u, rightarrow] & 1,
    \end{tikzcd}
    \end{center}
    where $\widetilde{\boldsymbol{\mu}_n}$ denotes the preimage of $\boldsymbol{\mu}_n$ in $\GL_2$ and acts diagonally on $F\times \proj^1$ via $\boldsymbol{\mu}_n$. 

    Write $n=mp^r$ with $m$ and $p$ coprime. Then $\boldsymbol{\mu}_n = \boldsymbol{\mu}_m \times \boldsymbol{\mu}_{p^r}$ and the restriction of $\widetilde{\boldsymbol{\mu}_n}$ above $\boldsymbol{\mu}_m$ is a central extension $H$ of $\boldsymbol{\mu}_m$ by $\mathbf{G}_m$. By \cite[III, \S 6, Corollaire 4.4]{DG}, such extensions are classified by 
    \[ 
    \mathrm{Ext}^1(\boldsymbol{\mu}_m,\mathbf{G}_m) = \mathrm{Ext}^1(\Z/m\Z,\mathbf{G}_m) = k^*/{k^*}^m =0, 
    \] 
    since $k$ is algebraically closed and $m$ is coprime to $p$. Hence $H =  \boldsymbol{\mu}_m \times \mathbf{G}_m$ and $\widetilde{\boldsymbol{\mu}_n}$ is an extension of the infinitesimal group $\boldsymbol{\mu}_{p^r}$ by $H$. By \cite[IV, §1, Proposition 4.5]{DG}, such extensions are classified by \[
    \mathrm{Ext}^1(\boldsymbol{\mu}_{p^r},H)=0;
    \]
    thus $\widetilde{\boldsymbol{\mu}_n} = \boldsymbol{\mu}_n \times \mathbf{G}_m$. In particular, the group $\widetilde{\boldsymbol{\mu}_n}$ is diagonalizable as well.

    Without loss of generality, the embedding of $\widetilde{\boldsymbol{\mu}_n}$ into $\GL_2$ can be see as a faithful action of $\widetilde{\boldsymbol{\mu}_n}$ on the two-dimensional vector space $k_0 \oplus k_\alpha$. With this notation, we mean that the action is trivial on the first copy of $k$, while the second factor is acted on via the character $\alpha\colon \widetilde{\boldsymbol{\mu}_n}\to \mathbf{G}_m$. The line bundle $\mathcal{L}$ is then isomorphic to $F\times^{\widetilde{\boldsymbol{\mu}_n}} k_\alpha$, where the structural morphism is identified with the projection on $F/\widetilde{\boldsymbol{\mu}_n}=E$. This implies that \[
    \proj(\mathcal{O}_F \oplus \mathcal{L}) = F\times^{\widetilde{\boldsymbol{\mu}_n}} \proj^1(k_0\oplus k_\alpha)=F\times^{\boldsymbol{\mu}_n} \proj^1(k_0\oplus k_\alpha)\]
    and we are done.
\end{proof}

\begin{comment}
\begin{lemma}
    Let $E$ be an elliptic curve, $\mathcal{L}\in \Pic^0(E)\setminus \{\mathcal{O}_E\}$ of order $n\geq 1$ and $S=\proj(\mathcal{O}_E \oplus \mathcal{L})$. Two cases arise:
    \begin{enumerate}
        \item If $p$ does not divide $n$, then $S$ is isomorphic to $F\times^{\Z/n\Z} \proj^1$, where $F= E/(\Z/n\Z)$;

        \item If $p$ divides $m$, then $E$ is ordinary and $S$ is isomorphic to $F\times^{\boldsymbol{\mu}_n} \proj^1$, where $F= E/(\Z/n\Z)$. 
    \end{enumerate}
\end{lemma}

\begin{proof}
    (1): Since $p$ does not divide $n$, the group of $n$-torsion of $E$ is isomorphic to $(\Z/n\Z)^2$. Then the subgroup $\Z/n\Z$ acts on $F$ by translations and $F/(\Z/n\Z) = E$. Moreover, the subgroup $\Z/n\Z\subset \Gm$ also acts naturally on $\mathbf{A}^1 \setminus \{0\}$ without fixed points and this action extends to $\proj^1$ by fixing $0$ and $\infty$. Then we obtain the following cartesian square
    \begin{center}
    \begin{tikzcd}
    F\times \proj^1 \arrow[r, rightarrow,"q"]\arrow[d, rightarrow, "p_1"] & F\times^{(\Z/n\Z)} \proj^1\arrow[d, rightarrow, "\pi"] \\
    F \arrow[r, rightarrow,"\iota"]& F/(\Z/n\Z) = E.
    \end{tikzcd}
    \end{center}    
\end{proof}
\end{comment}

Going back to the Atiyah surface $\mathsf{A}_1$, by \Cref{blanchard}, the structural morphism $\pi\colon \mathsf{A}_1 \to E$ induces a morphism of connected algebraic groups 
\[
\pi_*\colon \Aut^0(\mathsf{A}_1) \to \Aut^0(E) = E,
\]
which is surjective and has a finite kernel by \cite[Theorems 2 and 3]{Maruyama}. This implies that $\Aut^0(\mathsf{A}_1)$ is isomorphic to an elliptic curve $F$; so by \cite[Proposition 5.6]{Brion2}, there exist a finite subgroup scheme $G\subset F$ and a $G$-normal curve $Y$ such that $\mathsf{A}_1$ is isomorphic to the contracted product $F\times^G Y$.  

If $p\neq 2$, then the two torsion subgroup $E[2]\subset E$ is isomorphic to $(\Z/2\Z)^2$ and $\mathsf{A}_1$ is isomorphic to the contracted product \[
E\times^{E[2]} \proj^1,\]
where $E[2]$ acts on $\proj^1$ via $z \mapsto \pm z^{\pm 1}$: see e.g.\ \cite[\S 3.3.3]{fong_1}. 

To the extent of our knowledge, there is no description in the literature of $\mathsf{A}_1$ as a contracted product when $p= 2$. In that case, the two torsion subgroup scheme $E[2]$ is not constant. If $E$ is an ordinary elliptic curve, then $E[2] = \Z/2\Z \times \boldsymbol{\mu}_2$. 
Else $E$ is a supersingular elliptic curve and $E[2]$ is an infinitesimal group scheme which is a non-trivial extension of $\boldsymbol{\alpha}_2$ by itself (an explicit description is given in \cite[Lemma 4.2]{Bianca_Dajano}). 

\begin{lemma}\label{lem:A_1_ordinary}
    Let $E$ be an elliptic curve. Assume $p\neq 2$, or $p=2$ and $E$ is ordinary. Then $E[2]$ acts faithfully on $\proj^1$ via the embedding
    \[
    \begin{array}{ccc}
         \Z/2\Z \times \boldsymbol{\mu}_2 & \longrightarrow & \PGL_2 \\
         t \in \boldsymbol{\mu}_2 & \longmapsto & 
         \begin{pmatrix}
            t & 0 \\ 0 & 1
         \end{pmatrix} \\
         1 \in \Z/2\Z & \longmapsto & 
         \begin{pmatrix}
            0 & 1 \\ 1 & 0
         \end{pmatrix}.
    \end{array}
    \]
    This induces a diagonal action of $E[2]$ on $E\times \proj^1$ and the contracted product $E\times^{E[2]} \proj^1$ is isomorphic to the ruled surface $\mathsf{A}_1$.
\end{lemma}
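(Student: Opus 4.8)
The plan is to verify faithfulness by a direct computation in $\PGL_2$, then to realize $S\defeq E\times^{E[2]}\proj^1$ as an elliptic ruled surface over $E$ and to pin it down among the three possibilities of \Cref{rem:elliptic_ruled_surface} by computing the parity of the self-intersection of a suitable section. For the faithfulness statement, I would first record that on the affine coordinate of $\proj^1$ the subgroup $\boldsymbol{\mu}_2$ acts by $z\mapsto tz$ and the generator of $\Z/2\Z$ by $z\mapsto z^{-1}$. One checks that these two families commute in $\PGL_2$: the commutator of the matrices in the statement equals the scalar $tI$ (using $t^2=1$), hence is trivial in $\PGL_2$, so the assignment does define a homomorphism on the direct product $\Z/2\Z\times\boldsymbol{\mu}_2=E[2]$. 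It is injective because the only group element whose image is a scalar matrix is the identity: $\mathrm{diag}(t,1)$ is scalar only for $t=1$, while $\sigma\,\mathrm{diag}(t,1)$ is never scalar. (The nondegeneracy of this commutator pairing, which is the Weil pairing $e_2$, is morally the reason the resulting bundle is indecomposable of odd degree.)

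Next I would set up the geometry. The diagonal action of $E[2]$ on $E\times\proj^1$ is the prescribed one, with $E[2]\subset E$ acting by translation on the first factor. Since $E[2]$ is a subgroup of the smooth connected group $E$ and $\proj^1$ is smooth, hence $E[2]$-normal, \Cref{regular} shows that $S$ is a smooth projective surface. By \Cref{contracted_product} the first projection is a $\proj^1$-bundle $\pi\colon S\to E/E[2]$, and since $[2]\colon E\to E$ identifies $E/E[2]\cong E$, the surface $S$ is geometrically ruled over an elliptic curve. The second projection $S\to\proj^1/E[2]\cong\proj^1$ has generic fiber $E$, of genus one, so $S$ is an elliptic ruled surface; by \Cref{rem:elliptic_ruled_surface} we conclude $S\in\{\proj(\mathcal{O}_E\oplus\mathcal{L}),\ \mathsf{A}_0,\ \mathsf{A}_1\}$.

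It then suffices to exhibit one section of $\pi$ of odd self-intersection, since the first two surfaces have underlying bundle of even degree (so all their sections have even self-intersection), whereas $\mathsf{A}_1=\proj(\mathcal{E}_{2,1})$ has odd degree. Sections of $\pi$ correspond to $E[2]$-equivariant morphisms $\varphi\colon E\to\proj^1$. I would take $\mathcal{M}\defeq\mathcal{O}_E(2\cdot 0_E)$, for which the group $\{x\in E:\ t_x^*\mathcal{M}\cong\mathcal{M}\}$ is exactly $E[2]$; the translation action of $E[2]$ then linearizes on $H^0(\mathcal{M})$ and descends, in suitable coordinates, to the projective action of the statement (this faithful $\PGL_2$-action with nondegenerate commutator is unique up to conjugacy, and conjugating the action on $\proj^1$ produces an isomorphic contracted product). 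The associated degree-two map $\varphi\colon E\to\proj^1=\proj\,H^0(\mathcal{M})^\vee$ is thus $E[2]$-equivariant and defines a section $C_0\subset S$. Writing $q\colon E\times\proj^1\to S$ for the quotient, which is finite and flat of degree $\vert E[2]\vert=4$, one has $q^*C_0=\Gamma_\varphi$, the graph of $\varphi$, and the projection formula yields $4\,C_0^2=\Gamma_\varphi^2=\deg\varphi^*T_{\proj^1}=2\deg\varphi=4$, so $C_0^2=1$ is odd and $S\cong\mathsf{A}_1$.

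The hard part is this last step. Two points require care, both genuinely new in the regime $p=2$ where $\boldsymbol{\mu}_2$ is infinitesimal (in the classical case $p\neq 2$ the action is étale and this is \cite[\S 3.3.3]{fong_1}). First, one must justify that the linear system $\vert\mathcal{M}\vert$ realizes precisely the prescribed action, i.e.\ match it with the explicit matrices via uniqueness of the Heisenberg representation. Second, one must control the intersection computation through the non-étale quotient $q$; the crucial identity $q^*C_0=\Gamma_\varphi$ (with multiplicity one) follows by comparing degrees, since $\Gamma_\varphi\to C_0$ is the map $[2]\colon E\to E/E[2]$ of degree $4=\deg q$, and the projection formula $(q^*\alpha)^2=\deg(q)\,\alpha^2$ remains valid for finite flat, possibly inseparable, $q$.
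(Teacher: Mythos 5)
Your strategy is genuinely different from the paper's, and its geometric core is sound. The paper argues by exclusion: after the commutation check it shows that $\tilde p\colon E\times^{E[2]}\proj^1\to E/E[2]$ admits no pair of disjoint sections (their pullbacks along the torsor $q$ would be disjoint, hence constant, $E[2]$-invariant sections of the trivial bundle, contradicting that $E[2]$ has no fixed point on $\proj^1$) and no section of self-intersection zero, so Atiyah's classification leaves only $\mathsf{A}_1$. You argue by exhibition: you produce a section of odd self-intersection and use the parity of $\deg\mathcal{E}$ as the distinguishing invariant. Your intersection-theoretic step is correct and is actually cleaner than your degree comparison suggests: since $q$ is an $E[2]$-torsor and $\Gamma_\varphi$ is $E[2]$-stable, fppf descent gives $q^{-1}(C_0)\cong\Gamma_\varphi$ scheme-theoretically, and the projection formula for finite flat (possibly inseparable) morphisms then yields $4C_0^2=\Gamma_\varphi^2=4$, so $C_0^2=1$. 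The parity criterion you invoke (all sections of $\proj(\mathcal{E})$ have self-intersection $\equiv\deg\mathcal{E}\pmod 2$, since any section is numerically $C_0+nf$) is standard and correctly applied.

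The genuine gap is exactly the point you flag and then leave unresolved: matching the action coming from $\vert\mathcal{M}\vert$, $\mathcal{M}=\mathcal{O}_E(2\cdot 0_E)$, with the prescribed matrices. Your appeal to ``uniqueness of the Heisenberg representation'' is not an off-the-shelf citation here: Mumford's Stone--von Neumann theory is developed for line bundles of separable type, i.e.\ degree prime to $p$, which fails precisely in the case $p=2=\deg\mathcal{M}$, where $K(\mathcal{M})=E[2]$ contains the infinitesimal factor $\boldsymbol{\mu}_2$. Without this matching, you have proven the lemma for \emph{some} faithful $E[2]$-action on $\proj^1$, not for the stated one. The needed statement --- every faithful homomorphism $\Z/2\Z\times\boldsymbol{\mu}_2\to\PGL_2$ is conjugate to the one in the statement --- is true and can be proven by hand: the image of $\boldsymbol{\mu}_2$, being of multiplicative type, lies in a maximal torus, so up to conjugacy it is $t\mapsto\diag(t,1)$; a functor-of-points computation shows that its centralizer in $\PGL_2$ consists exactly of the diagonal and antidiagonal elements, so the generator of $\Z/2\Z$ maps to some antidiagonal involution $\begin{pmatrix} 0 & a\\ 1 & 0\end{pmatrix}$, and all of these are conjugate to $\begin{pmatrix} 0 & 1\\ 1 & 0\end{pmatrix}$ by diagonal elements because squaring is surjective on $k^\times$ in characteristic $2$. (Alternatively, the paper's remark following \Cref{lem:A_1_supersingular} observes that $E[2]$ acts faithfully on $E/\sigma=\proj^1$, $\sigma$ the sign involution, and the quotient map $E\to E/\sigma$ is exactly your $\varphi$, tautologically equivariant --- but even with that simplification the conjugacy lemma is still required to transfer the conclusion to the specific embedding in the statement.) Once that lemma is supplied, your proof is complete.
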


\begin{proof}
    In $\PGL_2$, the following equalities hold:
    \[
    \begin{pmatrix}
        t & 0 \\ 0 & 1
    \end{pmatrix} 
    \begin{pmatrix}
        0 & 1 \\ 1 & 0
    \end{pmatrix}     
    =
    \begin{pmatrix}
        0 & t\\ 1 & 0
    \end{pmatrix}
    =
    \begin{pmatrix}
        0 & 1\\t&0
    \end{pmatrix}
    =
    \begin{pmatrix}
        0 & 1 \\ 1 & 0
    \end{pmatrix}
    \begin{pmatrix}
        t & 0 \\ 0 & 1
    \end{pmatrix},
    \]
    as $t \in \boldsymbol{\mu}_2$. This yields the embedding of $E[2]$ in $\PGL_2$ given in the statement, and hence an action of $E[2]$ on $\proj^1$. We obtain the following cartesian square:
    \begin{center}
    \begin{tikzcd}
    E\times \proj^1 \arrow[r, rightarrow,"q"]\arrow[d, rightarrow, "p_1"] & E\times^{E[2]} \proj^1\arrow[d, rightarrow, "\Tilde{p}"] \\
    E \arrow[r, rightarrow,"\pi"]& E/E[2],
    \end{tikzcd}
    \end{center}
    where $p_1$ denotes the projection on the first factor, $\tilde{p}$ the projection onto the first factor modulo $E[2]$, while $q$ and $\pi$ are the quotient maps by $E[2]$. 

    Now we claim that $E\times^{E[2]} \proj^1$ is isomorphic to $\mathsf{A}_1$. First, assume by contradiction that $\tilde{p}$ admits two disjoint sections $\sigma_1$ and $\sigma_2$. Then their pullbacks $\pi^*\sigma_1$ and $\pi^*\sigma_2$ are disjoint sections of the trivial $\proj^1$-bundle $p_1$; moreover, they are given by precomposition by $\pi$; hence, they are $E[2]$-invariant constant sections of the trivial bundle $p_1$. As the group $E[2]$ acts on $\proj^1$ without fixed point, the constant sections of $p_1$ are not obtained by pulling back sections of $\tilde{p}$. Hence we get a contradiction, which shows that the $\proj^1$-bundle $\tilde{p}$ is indecomposable. 

    Moreover, the $\proj^1$-bundle $\tilde{p}$ has no section of self-intersection zero (this may be seen using the formula in \cite[Proposition 1.10]{Debarre}). In particular, the $\proj^1$-bundle $\tilde{p}$ is not isomorphic to $\mathsf{A}_0\to E$, as the latter admits a unique minimal section of self-intersection zero: see e.g.\ \cite[Proposition 2.21]{fong_1}.
\end{proof}

\begin{remark}
\label{A0}
    Replacing the two-torsion subgroup scheme by its reduced structure in the proof above does not give the ruled surface $\mathsf{A}_1$. Instead, this construction is similar to \cite[Example 4.7]{Katsuro_Ueno} and we obtain the ruled surface $\mathsf{A}_0$ over an ordinary elliptic curve $E$. Indeed, set $F\defeq E/\boldsymbol{\mu}_2$. 
    Then the reduced component of the two torsion subgroup $F[2]_{\text{red}} =\Z/2\Z$ acts on $\proj^1$ via $z\mapsto z^{\pm 1}$. Also, by duality, $E=F/F[2]_{\text{red}}$.
    This yields the following cartesian square
    \begin{center}
    \begin{tikzcd}
    F\times \proj^1 \arrow[r, rightarrow,"q"]\arrow[d, rightarrow, "p_F"] & F\times^{F[2]_{\text{red}}} \proj^1\arrow[d, rightarrow, "p_{E}"] \\
    F \arrow[r, rightarrow,"h"]& E\defeq F/F[2]_{\text{red}}.
    \end{tikzcd}
    \end{center}
    Since the trivial $\proj^1$-bundle $p_F$ admits a unique $(\Z/2\Z)$-invariant constant section, we obtain by the projection formula that $p_E$ has a unique minimal section of self-intersection zero; see e.g.\ \cite[Proposition 1.10]{Debarre}. Therefore, $p_E$ is an indecomposable $\proj^1$-bundle isomorphic to $\mathsf{A}_0$ or $\mathsf{A}_1$, but the minimal sections of the ruled surfaces $\mathsf{A}_0$ and $\mathsf{A}_1$ have respectively self-intersection zero and one (see e.g.\ \cite[Propositions 2.18 and 2.21]{fong_1}); thus, 
    \[
    F\times^{F[2]_{\text{red}}} \proj^1=\mathsf{A}_0.\] 
\end{remark}

In the following computation, the parameter $t \in E[2]$ is intended in a functorial sense: for a $k$-algebra $R$, we consider $t \in E[2](R)$.

\begin{lemma}\label{lem:A_1_supersingular}
    Assume $p=2$ and let $E$ be a supersingular elliptic curve. Then the subgroup scheme $E[2]$ 
    is a nontrivial extension of $\boldsymbol{\alpha}_2$ by itself. Moreover, $E[2]$ acts faithfully on $\proj^1$ via the embedding
    \[
    \begin{array}{ccc}
    E[2] & \longrightarrow & \PGL_2\\
    t & \longmapsto & 
    \begin{pmatrix}
        1 & t^2 \\ t & 1+t^3
    \end{pmatrix}.
    \end{array}
    \]
    This induces a diagonal action of $E[2]$ on $E\times \proj^1$, and the contracted product $E\times^{E[2]} \proj^1$ is isomorphic to the ruled surface $\mathsf{A}_1$. 
\end{lemma}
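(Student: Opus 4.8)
The plan is to follow the strategy of the proof of \Cref{lem:A_1_ordinary}: exhibit the action explicitly, check that it is faithful, and then identify the contracted product with $\mathsf{A}_1$ by proving that the resulting $\proj^1$-bundle is indecomposable and carries no section of self-intersection zero. The only genuinely new content, compared to the ordinary case, is the construction of the action in the infinitesimal setting, since now $E[2]$ is not constant.

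First I would recall from \cite{Bianca_Dajano} the structure of $E[2]$: for a supersingular $E$ in characteristic $2$ it is infinitesimal of order $4$, isomorphic as a scheme to $\Spec k[t]/(t^4)$, and fits into a nonsplit self-extension $0 \to \boldsymbol{\alpha}_2 \to E[2] \to \boldsymbol{\alpha}_2 \to 0$. In suitable coordinates its group law reads $s \ast t = s + t + s^2 t^2$: the sub-$\boldsymbol{\alpha}_2$ is cut out by $\{s^2 = 0\}$, the induced law on the quotient is additive, the nonsplitness is recorded by the term $s^2 t^2$, and every element is indeed $2$-torsion.

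Next I would verify that $M\colon t \mapsto \left(\begin{smallmatrix} 1 & t^2 \\ t & 1+t^3\end{smallmatrix}\right)$ is a well-defined homomorphism. Well-definedness is clear since $\det M(t) = (1+t^3) - t^3 = 1$, so $M(t)\in \SL_2(R)$ for every $t \in E[2](R)$ and its class in $\PGL_2$ makes sense. The homomorphism property is the heart of the argument: multiplying out modulo $s^4 = t^4 = 0$ and using the group law above, one checks entry by entry that
\[ M(s)\,M(t) = (1 + s^2 t)\, M(s \ast t). \]
As $s^2 t$ is nilpotent, the scalar $1 + s^2 t$ is a unit, so this equality descends to $\PGL_2(R)$; hence $M$ is a homomorphism of group schemes. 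Note that $M$ does \emph{not} lift to a homomorphism into $\SL_2$, since the scalar $1 + s^2 t$ is not symmetric in $s,t$, which is precisely why the target must be $\PGL_2$. Faithfulness is then immediate: the kernel of $M$ is the closed subscheme of $E[2]$ over which $M(t)$ becomes scalar, and the vanishing of the off-diagonal entry $t$ already forces $t=0$; thus the kernel is $\Spec k$ and $M$ is a closed immersion, i.e.\ the action on $\proj^1$ is faithful. I expect this verification, together with matching coordinates to the presentation of \cite{Bianca_Dajano}, to be the main obstacle, being a delicate truncated computation.

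Finally, $M$ induces a diagonal action of $E[2]$ on $E \times \proj^1$ and hence the contracted product $\tilde p\colon E \times^{E[2]} \proj^1 \to E/E[2]$. To conclude that this is $\mathsf{A}_1$, I would repeat the last part of the proof of \Cref{lem:A_1_ordinary}, whose only input is that $E[2]$ acts on $\proj^1$ without fixed points. This holds here as well: the stabilizer of $[0:1]$ is the sub-$\boldsymbol{\alpha}_2$ cut out by $t^2=0$, while the stabilizer of $[1:0]$ is trivial, so no rational point has full stabilizer $E[2]$. The no-fixed-point property forces $\tilde p$ to be indecomposable, and the exclusion of $\mathsf{A}_0$ follows, as before, by checking via \cite[Proposition 1.10]{Debarre} that $\tilde p$ has no section of self-intersection zero; since $\mathsf{A}_0$ is the unique indecomposable $\proj^1$-bundle over $E$ whose minimal section has self-intersection zero, we conclude $E \times^{E[2]}\proj^1 \cong \mathsf{A}_1$.
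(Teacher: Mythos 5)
Your proposal follows the paper's proof essentially step for step: the heart of both is the truncated computation $M(t)M(s)=(1+t^2s)\,M(t\boxplus s)$ for the group law $t\boxplus s=t+s+t^2s^2$ (yours is the same identity with $s$ and $t$ interchanged), and the endgame identifying the contracted product with $\mathsf{A}_1$ is the one from \Cref{lem:A_1_ordinary} in both cases. The only structural difference is that the paper derives the group law self-containedly --- classifying the self-extensions of $\boldsymbol{\alpha}_2$ via \cite{DG} and identifying $E[2]$ with $\ker(F-V\colon W_2\to W_2)$ by a Verschiebung argument --- whereas you quote it from \cite{Bianca_Dajano}; that is legitimate (the paper credits that reference for the result), though your ``matching coordinates'' caveat is precisely where the paper's Witt-vector argument does real work. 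Your added checks that $\det M(t)=1$ and that $\ker M$ is cut out by $t=0$ (so $M$ is a closed immersion and the action faithful) are correct and make explicit what the paper leaves implicit.

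The one step whose written justification fails is the no-fixed-point claim (which the paper asserts without proof). From ``$\Stab_{E[2]}([0:1])=\boldsymbol{\alpha}_2$ and $\Stab_{E[2]}([1:0])=1$'' it does not follow that no $k$-point of $\proj^1$ has full stabilizer $E[2]$: a priori a fixed point could sit at some $[1:c]$ with $c\neq 0$, which you never examine, so the ``so'' in that sentence is a non sequitur. The repair is one line and uses exactly the first of your two computations: any $E[2]$-fixed point is in particular fixed by the subgroup $\boldsymbol{\alpha}_2=\{t^2=0\}$, which acts by $[x:y]\mapsto[x:tx+y]$; its fixed $k$-points satisfy $tx^2=0$, i.e.\ $x=0$, so the only candidate is $[0:1]$, whose stabilizer you have already shown to be $\boldsymbol{\alpha}_2\subsetneq E[2]$. (Equivalently, the fixed-point condition at $[a:b]$ reads $a^2t+b^2t^2+abt^3=0$ in $k[t]/(t^4)$, forcing $a=b=0$.) With that line inserted, your argument is complete and coincides with the paper's.
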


\begin{remark}
	The morphism in the statement above, since we are dealing with infinitesimal group schemes, needs an additional argument to be well-defined: if $R$ is a finitely generated $k$-algebra, then $\PGL_2(R)$ is not isomorphic to the quotient $\GL_2(R)/R^{\times}$; however, the natural short exact sequence describing $\PGL_2$ as a quotient of $\GL_2$ by $\mathbf{G}_m$ implies there is an inclusion of $\GL_2(R)/R^{\times}$ into $\PGL_2(R)$.
	
	The ideas of the proof below come from \cite[Example 5.11]{Bianca} and \cite{Bianca_Dajano}. We write here a less technical description which avoids the formalism of Hopf algebras, suggested as well by Gouthier and Tossici. \Cref{lem:A_1_supersingular} is due to them and is also partially contained in the PhD thesis of Gouthier.
\end{remark}

\begin{proof}[Proof of \Cref{lem:A_1_supersingular}]
    Since the $2$-torsion group $E[2]$ is a subscheme of $E$ of length four with reduced subscheme a point, so it is isomorphic to 
    \[
    \Spec \left(k[t]/t^4\right)
    \] as a scheme. By \cite[III, $\S$6, Corollary 7]{DG} applied to the case of an algebraically closed base field $k$, there are exactly four isomorphism classes of commutative unipotent infinitesimal groups obtained as extensions of $\boldsymbol{\alpha}_2$ by itself, corresponding to the cases $(0,0)$, $(1,0)$, $(0,1)$ and $(1,1)$ in the statement of the Corollary. In particular, these correspond to $\boldsymbol{\alpha}_2\times \boldsymbol{\alpha}_2$, to $\boldsymbol{\alpha}_4$, to 
    \begin{align}
    \label{FrobeniusW2}
    \ker (F \colon W_2 \longrightarrow W_2)
    \end{align}
    and finally to a fourth group which we do not need in this context. Concerning notation, $W_2$ denotes the ring scheme of Witt vectors of length two over $k$, while $F$ is the Frobenius homomorphism. Next, let us consider
    \[
    G=  \ker (F-V \colon W_2\longrightarrow W_2) = \Spec (k[T_0,T_1]/(T_0^2,T_1^2-T_0)),
    \]
    where $V$ is the Verschiebung homomorphism. In particular, the group law is given by the one on $W_2$, namely
    \[
    (t_0,t_1) \boxplus (s_0,s_1) = (t_0+s_0,t_1+s_1+t_0s_0).
    \]
    Both the group $G$ and $E[2]$ are nontrivial extensions of $\boldsymbol{\alpha}_2$ by itself. Moreover, they cannot be isomorphic to $\boldsymbol{\alpha}_4$ nor to the Frobenius kernel (\ref{FrobeniusW2}), since the Verschiebung morphism of these latter groups is trivial, while the one of $G$ and of $E[2]$ are not. Hence, both are isomorphic to the fourth group, and in particular $E[2] = G$.

    Since in $E[2]$ we have the equality $t_0 = t_1^2$, we can just work with the second coordinate and simply call it $t = t_1$. Thus, the group scheme structure is as follows :
    \[
    E[2] \simeq \Spec(k[t]/t^4), \quad t  \boxplus s = t+s+t^2s^2.
    \]
    We can verify that the embedding into $\PGL_2$ given in the statement respects this group law.
    On one hand, we have
    \begin{align}
        \label{computation_E2}
    t \boxplus s \longmapsto
    \begin{pmatrix}
        1 & t^2+s^2\\
        t+s+t^2s^2 & 1+t^3+st^2+s^2t+s^3
    \end{pmatrix}.
    \end{align}
    On the other hand, 
    \[
    \begin{pmatrix}
        1 & t^2\\ t & 1+t^3
    \end{pmatrix}
    \begin{pmatrix}
        1 & s^2\\ s & 1+s^3
    \end{pmatrix}
    = \begin{pmatrix}
        1+t^2s & s^2+t^2+t^2s^3\\
        t+s+st^3 & ts^2+1+t^3+s^3+t^3s^3
    \end{pmatrix}.
    \]
    Since we are in $\PGL_2$, we can multiply the last matrix by the nonzero scalar $1+t^2s$. Using the fact that $t^4=s^4=0$, we get exactly the same thing as in (\ref{computation_E2}).
    
    Therefore, the subgroup scheme $E[2]\subset E$ acts diagonally on the product $E\times \proj^1$, and as in the proof of Lemma \ref{lem:A_1_ordinary}, we obtain the following commutative square:
    \begin{center}
    \begin{tikzcd}
    E\times \proj^1 \arrow[r, rightarrow,"q"]\arrow[d, rightarrow, "p_1"] & E\times^{E[2]} \proj^1\arrow[d, rightarrow, "\Tilde{p}"] \\
    E \arrow[r, rightarrow,"\pi"]& E/E[2].
    \end{tikzcd}
    \end{center}
    Since the action of $E[2]$ on $\proj^1$ has no fixed point, it follows that the constant sections of $p_1$ are not obtained by pulling back sections of $\tilde{p}$. Thus there is no section of $\tilde{p}$ of self-intersection zero, and this implies once more that $E\times^{E[2]} \proj^1 = \mathsf{A}_1$.
\end{proof}

\begin{remark}
	The constructions of Lemmas \ref{lem:A_1_ordinary} and \ref{lem:A_1_supersingular} give an explicit embedding of $E[2]$ into $\PGL_2$. An alternative argument for the existence of such embedding is the following: $E[2]$ commutes with the sign involution $\sigma$ on $E$, so $E[2]$ acts faithfully on the quotient $E/\sigma = \proj^1$.
\end{remark}

The following proposition seems to be a folklore result, known by the experts of surfaces in positive characteristic, but we cannot locate any reference.

\begin{proposition}\label{prop:ellipticruledsurface}
    Let $S$ be a ruled surface. Then $S$ is elliptic if and only if there exist an elliptic curve $F$ and a finite subgroup scheme $G\subset F$ such that $S=F\times^G \proj^1$. In particular, $S$ is equipped with a faithful action of the elliptic curve $F$.
\end{proposition}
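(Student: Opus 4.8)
The plan is to prove the two implications separately, assembling the explicit descriptions obtained in Lemmas~\ref{lem:dec_are_contracted}, \ref{lem:A_1_ordinary} and~\ref{lem:A_1_supersingular}, in Remark~\ref{A0}, and in the classification recalled in Remark~\ref{rem:elliptic_ruled_surface}.

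For the implication ``contracted product $\Rightarrow$ elliptic'', I would start from $S=F\times^G\proj^1$ and use Proposition~\ref{regular} to get that $S$ is smooth, together with Definition~\ref{contracted_product} to identify the first projection $S\to F/G=:E$ with a $\proj^1$-bundle over the elliptic curve $E$; in particular $S$ is geometrically ruled. Left translation of $F$ on the first factor commutes with the diagonal $G$-action, hence descends to an $F$-action on $S$; I would check faithfulness by noting that, after replacing $(F,G)$ by $(F/G',G/G')$ where $G'$ is the kernel of the $G$-action on $\proj^1$, an element of $F$ fixing all of $S$ must lie in $\bigcap_{x}\Stab_G(x)=1$. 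To exhibit a genus-one fibration I would then turn to the second projection $p\colon S\to\proj^1/G$: its target is a smooth curve, being the normal quotient of the (smooth, hence $G$-normal) curve $\proj^1$ by Lemma~\ref{normalization}, and over a free point $x$ the fiber is $F\times^G(G\cdot x)\cong F$ (and $E=F/G$ if $G$ acts trivially on $\proj^1$). In every case the generic fiber of $p$ is a smooth curve of genus one, so $S$ is elliptic.

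For the converse, suppose $S$ is ruled with ruling $\pi\colon S\to C$ and also carries a genus-one fibration $\psi\colon S\to D$ with generic fiber $\Phi$. Since $\Phi$ has positive genus it is not contained in a fiber of $\pi$, so $\pi|_\Phi\colon\Phi\to C$ is finite surjective and Riemann--Hurwitz forces $g(C)\le 1$. The case $C=\proj^1$ is excluded because a Hirzebruch surface admits no class $\Gamma$ with $\Gamma^2=K_S\cdot\Gamma=0$ besides multiples of the ruling fiber, which are rational; hence $C$ is an elliptic curve and Remark~\ref{rem:elliptic_ruled_surface} applies. It then remains to observe that each surface in that classification is a contracted product: the product $C\times\proj^1$ with $F=C$ and $G$ trivial; for $\mathcal{L}$ of order $n\ge 2$ the surface $\proj(\mathcal{O}_C\oplus\mathcal{L})=F\times^{\boldsymbol{\mu}_n}\proj^1$ by Lemma~\ref{lem:dec_are_contracted}; the surface $\mathsf{A}_0$ by Remark~\ref{A0} (and the construction of Katsuro--Ueno recalled in Remark~\ref{rem:elliptic_ruled_surface}); and $\mathsf{A}_1=E\times^{E[2]}\proj^1$ by Lemmas~\ref{lem:A_1_ordinary} and~\ref{lem:A_1_supersingular}. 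This exhausts all cases.

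The main obstacle has in fact already been resolved in the preceding lemmas, namely producing the contracted-product descriptions in characteristic two, where $E[2]$ is nonreduced in the ordinary case and infinitesimal in the supersingular case. Within the present proof the only steps requiring genuine argument are the reduction of the base of the ruling to an elliptic curve and the computation of the generic fiber of the second projection; the faithfulness of the induced $F$-action then yields the final ``in particular'' assertion.
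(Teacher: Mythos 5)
Your proof is correct and takes essentially the same route as the paper: the paper's own proof consists exactly of invoking the classification recalled in Remark \ref{rem:elliptic_ruled_surface} and then citing Lemmas \ref{lem:dec_are_contracted}, \ref{lem:A_1_ordinary}, \ref{lem:A_1_supersingular} and the Katsuro--Ueno construction of Remark \ref{A0} to realize each of the three classes as a contracted product. The additional points you spell out --- the easy implication via the two projections of $F\times^G\proj^1$, the reduction showing the base of the ruling must be an elliptic curve before the classification applies, and the faithfulness of the $F$-action after replacing $(F,G)$ by $(F/G',G/G')$ --- are steps the paper leaves implicit, and your treatment of them is sound.
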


\begin{proof}
    By Remark \ref{rem:elliptic_ruled_surface}, $S$ is elliptic if and only if $S$ is isomorphic to $\proj(\mathcal{O}_E\oplus \mathcal{L})$ with $\mathcal{L}\in \Pic^0(E)\setminus \{\mathcal{O}_E\}$ of finite order, or $\mathsf{A}_0$ or $\mathsf{A}_1$. Each of these surfaces can be expressed in the form of a contracted product as in the statement, by \cite[Examples 4.7 and 4.9]{Katsuro_Ueno} (see also Remark \ref{rem:elliptic_ruled_surface}) for the case $S=\mathsf{A}_0$, and Lemmas \ref{lem:dec_are_contracted}, \ref{lem:A_1_ordinary} and \ref{lem:A_1_supersingular} for the remaining cases.
\end{proof}

\subsection{Strongly isotrivial elliptic fibrations} 
Let $S$ be a smooth projective surface and $Y$ be a smooth projective curve with generic point $\eta$. 

\begin{definition}\label{def:stronglyiso}
	A morphism
	\[
	f \colon S \longrightarrow Y
	\]
	is a \emph{strongly isotrivial elliptic fibration} if there exists an elliptic curve $E$ over $k$ such that 
	\[
	(\underline{\Pic}^0_{S/Y})_\eta = E \times_{\Spec k} \Spec k(Y).
	\]
\end{definition}

In particular, such fibration is automatically an \emph{elliptic fibration}, i.e.\ the generic fiber is a torsor under an elliptic curve.

We would like to work with a regular action of an elliptic curve on this family of surfaces; it turns out that it is enough to assume minimality in order to have one.

\begin{lemma}
    \label{lem:first}
    Let $S$ as above be relatively minimal. Then the action of $E$ on the generic fiber of $S$ induces a regular faithful action of $E$ on $S$.
\end{lemma}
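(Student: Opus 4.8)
The plan is to exhibit the action in three steps: produce it first as a rational action coming from the torsor structure of the generic fibre, next check that it is biregular by exploiting relative minimality, and finally extend it over all of $E$ by Weil's extension theorem. First I would construct the rational action. Since $f$ is strongly isotrivial, the generic fibre $S_\eta\defeq S\times_Y\Spec k(Y)$ is a smooth curve of genus one over $k(Y)$ whose Jacobian is $(\underline{\Pic}^0_{S/Y})_\eta=E\times_k\Spec k(Y)$; write $E_{k(Y)}\defeq E\times_k\Spec k(Y)$. Thus $S_\eta$ is a torsor under $E_{k(Y)}$ and carries the translation action $E_{k(Y)}\times_{k(Y)}S_\eta\to S_\eta$. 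Using the identification $E_{k(Y)}\times_{k(Y)}S_\eta\cong E\times_k S_\eta$ together with the fact that $S_\eta$ is the generic fibre of $f$, this morphism spreads out to a rational map $a\colon E\times_k S\dashrightarrow S$ over $Y$ which on generic fibres is the torsor translation.

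The main obstacle is to show that $a$ is in fact everywhere defined, and this is where relative minimality is essential; the cleanest way I see is to work at the generic point of $E$. Restricting $a$ there yields a birational self-map $\theta$ of the surface $S_{k(E)}$ over $Y_{k(E)}\defeq Y\times_k\Spec k(E)$ which restricts, on the generic fibre of $S_{k(E)}\to Y_{k(E)}$, to translation by the tautological (nonzero) point of $E\big(k(E)\big)$. I would prove that $\theta$ is an automorphism. As it is an isomorphism on the generic fibre of $S_{k(E)}$, it is an isomorphism over a dense open of $Y_{k(E)}$, so its indeterminacy points and contracted curves lie in finitely many fibres. A birational self-map of a smooth projective surface is an isomorphism unless it (or its inverse) contracts a curve; since $\theta^{-1}$ is again a translation, I may assume $\theta$ contracts an irreducible curve $C$, which being over $Y_{k(E)}$ lies in a single fibre. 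Factoring $\theta$ through blow-ups, $C$ is dominated by a $(-1)$-curve $\mathcal E$, and the self-intersection relation $-1=\mathcal E^2=C^2-\sum_i b_i^2\le C^2$ (the $b_i\ge 0$ arising from the blow-ups), together with the fact that a curve contained in a fibre has non-positive self-intersection, forces $C$ to be a $(-1)$-curve contained in a fibre, contradicting relative minimality. The one delicate case is when $C$ is a whole irreducible fibre of arithmetic genus one (so $C^2=0$): a smooth or cuspidal such fibre cannot be the image of a single $(-1)$-curve, and a nodal one (multiplicative reduction) is excluded because strong isotriviality makes the $j$-invariant constant and finite. Hence $\theta$ contracts nothing and is an automorphism of $S_{k(E)}$.

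Finally I would globalise. An automorphism $\theta\in\Aut(S_{k(E)})$ is the same datum as a rational map $E\dashrightarrow\Aut^0(S)$ defined at the generic point of $E$ (its image is connected and contains $\id_S$, hence lands in the neutral component). Because $E$ is a smooth curve and $\Aut^0(S)$ is a smooth algebraic group, Weil's extension theorem shows this rational map has no indeterminacy, i.e.\ extends to a morphism $E\to\Aut^0(S)$; equivalently, $a$ extends to a regular action $E\times_k S\to S$, which by construction restricts to the torsor translation on $S_\eta$. This action is faithful: if $e\in E(k)$ acted trivially on $S$, it would act trivially on $S_\eta$, i.e.\ translation by $e$ on the $E_{k(Y)}$-torsor $S_\eta$ would be the identity, forcing $e=0$; in fact the action is free on $S_\eta$. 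I expect the second step---ruling out contractions inside special fibres---to be the crux, the genus-one fibre case being the subtle point that makes the strong isotriviality hypothesis (constancy of $j$) genuinely necessary.
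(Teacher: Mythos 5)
Your plan---show the generic translation $\theta$ is biregular using relative minimality, then globalise---is a legitimate alternative to the paper's proof (which splits into $\kappa(S)<0$, settled by the classification of elliptic ruled surfaces in \Cref{prop:ellipticruledsurface}, and $\kappa(S)\geq 0$, settled by Weil's \emph{regularization} theorem for rational actions followed by equivariant desingularization, contraction of $(-1)$-curves, and uniqueness of minimal models). But the two steps you yourself call the crux both have genuine gaps. In Step 2, if $\theta$ contracts an irreducible curve $C$, what is true is that the strict transform $\widetilde C$ of $C$ on a resolution $Z$ of $\theta$ is contracted by the second projection $Z\to S_{k(E)}$, hence satisfies $\widetilde C^2\leq -1$; nothing forces $\widetilde C^2=-1$. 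Since $\widetilde C^2=C^2-\sum_i b_i^2$, you only get $C^2\geq \widetilde C^2$, which is vacuous, so the announced relation $-1=C^2-\sum_i b_i^2$ and the resulting dichotomy ``$C$ is a $(-1)$-curve or a whole genus-one fibre'' are not established: fibre components with $C^2\leq -2$, and singular rational curves with $C^2=-1$ (which are not $(-1)$-curves), are never excluded. The ``delicate case'' is also handled by assertion: the image of a smooth rational curve under a birational morphism can perfectly well be cuspidal, so ``a cuspidal fibre cannot be the image of a single $(-1)$-curve'' needs proof. The correct way to close Step 2 is the classical uniqueness of relatively minimal models for fibrations with fibres of arithmetic genus $\geq 1$ (see e.g.\ \cite[Theorem 9.3.21]{Liu}): adjunction plus Zariski's lemma show that relative minimality forces $K_{S}$ to be nef on vertical curves, and then a $(-1)$-curve on a minimal resolution of $\theta$ contracted by one projection but not the other pushes forward to a vertical curve $D$ with $K\cdot D<0$, a contradiction. (One also needs the minor remark that relative minimality survives the base change to $k(E)$, checked after passing to $\overline{k(E)}$, where the fibres are unchanged.)

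Step 3 is wrong as stated. Weil's theorem on rational maps to group varieties says the indeterminacy locus is pure of codimension one, which gives nothing for a map from the curve $E$; and rational maps from smooth projective curves to non-proper algebraic groups genuinely fail to extend (e.g.\ $t\mapsto t$ from $\proj^1$ to $\Gm$). The variant you implicitly need---rational maps from smooth varieties to \emph{abelian} varieties are morphisms---would require knowing $\Aut^0(S)$ is an abelian variety, which is not available at this stage and is false when $\kappa(S)=-\infty$: for $S=E\times\proj^1$ one has $\Aut^0(S)=E\times\PGL_2$. A correct repair is group-theoretic rather than geometric: spreading out $\theta$ gives a family of automorphisms $\theta_u$ over a dense open $U\subseteq E$ with $\theta_u\circ\theta_v=\theta_{u+v}$ wherever defined; for arbitrary $w\in E(k)$ write $w=u+v$ with $u,v\in U$, set $\theta_w\defeq\theta_u\circ\theta_v$ (well defined by separatedness of $S$), and use descent along the smooth surjection $U\times U\to E$, $(u,v)\mapsto u+v$, to see that the resulting action map $E\times S\to S$ is a morphism. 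Finally, faithfulness should be checked against subgroup \emph{schemes}, not only $k$-points; your remark that the action on $S_\eta$ is free (it is a torsor) does give this, but that is the argument to record, not triviality of translation by $k$-rational $e$. With these repairs your route would work, and would in fact be more uniform than the paper's, since it avoids the case split on $\kappa(S)$ and the appeal to the classification of elliptic ruled surfaces.
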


\begin{proof}
   If $\kappa(S)<0$, the statement follows from Proposition \ref{prop:ellipticruledsurface}. From now on, assume that $\kappa(S) \geq 0$. Then $\Pic^0 _{S_\eta/\eta}$
   is an elliptic curve over $\eta$ and it comes equipped with a natural action on 
   \[
   S_\eta \stackrel{\sim}{\longrightarrow} \underline{\Pic}^1_{S_\eta/\eta}.
   \]
   By the hypothesis on $f$, there is an elliptic curve $E$ (over $k$) such that
   \[
   \underline{\Pic}^0 _{S_\eta/\eta} = E \times_{\Spec k} \Spec k(\eta).
   \]
   Thus we can reformulate the above by saying that the curve $E_\eta$ acts on $S_\eta$; by the associativity of the fiber product, this yields a morphism
   \begin{align}
   \label{action_genfiber}
   E \times_{\Spec k} S_\eta \longrightarrow S_\eta.
   \end{align}
   The last map is an action of $E$ of $S_\eta$: indeed, $E_\eta$ acts on $S_\eta$, and all the isomorphisms considered just above commute with taking products. 
   Next, let us show that this action extends to a non-empty open subset of $S$. Let $\Gamma$ be the graph of (\ref{action_genfiber}) inside of $E \times S \times S$; then we have the following commutative diagram, where $pr_Y$ is the composition of the projection on $S$ and the fibration $f$.
   \begin{center}
   \begin{tikzcd}
       & \Gamma \arrow[d, "\sigma"] \arrow[r, hookrightarrow] & E \times S \times S \arrow[dl, "pr_{E \times S}"]\\
       Y & E \times S \arrow[l, "pr_Y"] & 
   \end{tikzcd}
   \end{center}
   By construction, $\sigma$ is an isomorphism over $E \times S_\eta$; hence there exists an open subset
   \[
   E \times S \supset U \supset E \times S_\eta
   \]
   such that $\sigma$ defines an isomorphism over $U$ (namely, the open subset $U$ on which the birational inverse of $\sigma$ is defined). Let $Z \defeq (E \times S ) \setminus U$, then as underlying sets:
   \[
   \vert Z \vert \subset \vert E \times (S \setminus S_\eta) \vert.
   \]
   Therefore, $pr_Y(Z) \subset Y$ does not contain the generic point $\eta$; hence, it must coincide with a finite subset $W$ of $Y$. The open subset $U$ then contains $E \times f^{-1}(Y \setminus W)$; hence there is a well-defined morphism 
   \[
   \alpha \colon E \times f^{-1} (Y \setminus W) \longrightarrow S
   \]
   i.e.\ a rational $E$-action on $S$. Composing $\alpha$ with $f$ yields an $E$-invariant morphism; hence, there is a unique factorisation through $Y \setminus W$. Thus, $E$ acts on $f^{-1}(Y\setminus W)$. 

    By the Weil regularization theorem, there exists a birational model $T$ of $S$ on which $E$ acts regularly. By \cite[Corollary 3]{Brion_actions_on_normal_varieties}, we may assume that $T$ is normal and projective. Replacing by its desingularization, which is obtained by successive blowups of the singular points and normalizations (see \cite[Remark B p.155]{Lipman}), we may furthermore assume that $T$ is smooth. Contracting successively the $(-1)$-curves, we arrive to a relatively minimal surface equipped with a regular action of $E$. Since $\kappa(S)\geq 0$ and $S$ is relatively minimal, it follows that the obtained surface is isomorphic to $S$ itself; hence, the action of $E$ on the non-empty open subset $f^{-1}(Y \setminus W)$ extends to the whole surface $S$, and we are done. 
\end{proof}

Thanks to \Cref{lem:first}, from now on we can (and we do) place ourselves in the following setting: let $E$ be an elliptic curve, acting faithfully on a smooth projective surface $S$. As illustrated in \cite[Section 5]{Brion2}, there exist a finite subgroup scheme $G \subset E$ and a $G$-normal curve $X$ such that
\[
S = E \times^G X,
\]
where the isomorphism is $E$-equivariant.

\begin{remark}
\label{diagram}
Let us fix the following notation for the maps involving $S$.
\[
\begin{tikzcd}
    E \times X \arrow[dd, "pr_X"] \arrow[r,"q"] & S\arrow[dd,"f"] \arrow[r,"h"] & E/G\\
    &&\\
    X \arrow[r, "\pi"] & Y \defeq X/G &
\end{tikzcd}
\]
The quotient morphism $q$ is a $G$-torsor, hence it is a finite morphism. 
The map $h$ is a surjective morphism locally trivial for the $fppf$ topology, with fiber $X$, and whose target is an elliptic curve isogenous to $E$. The morphism $\pi$ is the quotient by the group $G$, it restricts to a $G$-torsor 
\[
X_{\text{fr}} \longrightarrow Y_{\text{fr}}
\]
over the largest open $G$-stable subset $X_{\text{fr}}$ on which $G$ acts freely, and $Y_{\text{fr}}$ is open in $Y$. As noticed in the proof of \cite[Proposition 5.6]{Brion2}, the $G$-action is generically free (which is not automatically implied by faithfulness of the action, since we are dealing with group schemes) so that $X_{\text{fr}}$ is non-empty.  
Finally, the map $f$ is the categorical quotient by $E$; it is an elliptic fibration with fiber $E$ over $Y_{\text{fr}}$, while on the complement it might have multiple fibers; we describe these fibers in \Cref{nowild} below.
\end{remark}

\begin{remark}
\label{rem:nosection}
    A crucial point to notice is that the elliptic fibration $f\colon S\to Y$ does not admit a section in general. Let us assume such a section $\sigma$ exists, then it defines a map $\xi \colon E \times Y \to S$, defined by the action of $E$ and by $\sigma$. Since the action of $E$ is generically free, i.e., it is free at the generic point $\eta$, the restriction $ E \times \eta \to S_\eta$ is an isomorphism. Thus, the morphism $\xi$ is a birational morphism and decomposes as a product of contractions of $(-1)$-curves. The surfaces $E\times Y$ and $S$ being minimal, this implies that $\xi$ is an isomorphism between $S$ and $E \times Y$, which is in general not the case.
\end{remark}

Let us now prove a kind of a converse implication to \Cref{lem:first}.

\begin{lemma}
\label{lem:minimal}
    Assumption as above. Then any curve on $S$ has non-negative self-intersection; in particular, $S$ is minimal.
\end{lemma}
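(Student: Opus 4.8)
The plan is to exploit the faithful action of the elliptic curve $E$ on $S$ provided by \Cref{lem:first} together with the fact that, for every $g\in E(k)$, the translation automorphism $\varphi_g$ belongs to $\Aut^0(S)$. Since a connected algebraic group of automorphisms acts trivially on the Néron--Severi group, each translate $g\cdot C$ of an irreducible curve $C\subset S$ is numerically equivalent to $C$, i.e.\ $[g\cdot C]=[C]$ in $\mathrm{NS}(S)$. The whole argument then rests on moving $C$ by a general translation and separating two cases according to whether or not $C$ is $E$-invariant.

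First I would treat the case where $C$ is \emph{not} $E$-invariant. Its stabilizer in $E$ is then a proper closed subgroup, hence finite, so the orbit of $C$ is positive-dimensional and there exists $g\in E(k)$ with $g\cdot C\neq C$. The curves $C$ and $g\cdot C$ are distinct prime divisors on the smooth surface $S$, so they share no component and meet in nonnegative intersection, $C\cdot(g\cdot C)\geq 0$. Combining this with the numerical equivalence $g\cdot C\equiv C$ yields $C^2=C\cdot(g\cdot C)\geq 0$, as desired.

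Next I would treat the $E$-invariant case. Because $S=E\times^G X$ with $G\subset E$ finite, every point-stabilizer for the $E$-action is a finite subgroup scheme contained in (a conjugate of) $G$; in particular $E$ has no fixed points and every orbit is one-dimensional. Thus an $E$-invariant irreducible curve $C$ is the closure of a single one-dimensional orbit, hence coincides with the reduced structure $F_{\mathrm{red}}$ of a fiber of the quotient morphism $f\colon S\to Y$ from \Cref{diagram}. Any two fibers of $f$ are disjoint and algebraically equivalent (two points of the smooth curve $Y$ being algebraically equivalent), so a fiber $F$ satisfies $F^2=0$; writing $F=mF_{\mathrm{red}}$ gives $0=F\cdot F_{\mathrm{red}}=mF_{\mathrm{red}}^2$, whence $C^2=F_{\mathrm{red}}^2=0$. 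In all cases $C^2\geq 0$, so $S$ contains no $(-1)$-curve and is therefore relatively minimal.

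The two points demanding care are the triviality of the $\Aut^0(S)$-action on $\mathrm{NS}(S)$, which is standard since numerical classes are rigid under a connected group, and the identification of $E$-invariant irreducible curves with reduced fibers of $f$. I expect the latter to be the main (if minor) obstacle: it requires knowing that $f$ is the categorical quotient by the connected group $E$, so that its set-theoretic fibers are exactly the $E$-orbit closures, and that these are one-dimensional and irreducible, which follows from the finiteness of all point-stabilizers inherited from the finiteness of $G$.
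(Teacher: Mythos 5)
Your proof is correct, but it takes a genuinely different route from the paper's. The paper argues by contradiction in a few lines: if $S$ had a $(-1)$-curve $C$, Blanchard's Lemma (\Cref{blanchard}) would descend the $E$-action along the contraction $g\colon S\to S'$ of $C$, making $g$ equivariant; the curve $C$ is then $E$-stable, so the point $s=g(C)$ satisfies $\Stab_E(s)=E$, contradicting the fact that point stabilizers of a faithful action are linear (affine) subgroups, while $E$ is an elliptic curve. You instead compute self-intersections directly: translations by $E(k)$ lie in $\Aut^0(S)$ and hence preserve numerical classes, so a curve that is not $E$-invariant can be moved to a distinct curve $g\cdot C$, giving $C^2=C\cdot(g\cdot C)\geq 0$, while an $E$-invariant curve is the reduced structure of a fiber of $f$ and has square zero. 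The trade-off is this: the paper's argument is shorter and requires no analysis of orbits or fibers, but as written it only excludes contractible curves, i.e.\ it proves minimality; your argument delivers the lemma's full assertion (non-negativity of every self-intersection) directly. The step you flag as delicate --- identifying $E$-invariant irreducible curves with reduced fibers of $f$ --- does go through: on $S=E\times^G X$ the stabilizer of the image of a point $(e,x)$ is $\Stab_G(x)\subset G$ (using that $E$ is abelian), hence finite, so every orbit is one-dimensional and closed, $E$ being proper; moreover, since the set-theoretic fibers of $\pi$ are single $G$-orbits (\Cref{lem:categorical_quotient}) and any two points of such an orbit differ by an element of $G(k)\subset E(k)$ after adjusting the $E$-coordinate, each set-theoretic fiber of $f$ is a single $E$-orbit, hence irreducible; an $E$-invariant irreducible curve, being a closed orbit contained in such a fiber, therefore equals its reduced structure, and Zariski's argument with $f^*y=mF_{\mathrm{red}}$ finishes as you say.
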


\begin{proof}
    Let us assume that the surface $S$ is not minimal and let us consider a $(-1)$-curve $C$ and the morphism
    \[
    g \colon S \longrightarrow S^\prime
    \]
    obtained by contracting $C$ to a point $s$. By \Cref{blanchard} applied to the $E$-action, the curve $C$ must be $E$-stable, hence 
    \[
    \Stab_E(s) = E.
    \]
    However, the stabilizer of a point (for a faithful action of an algebraic group) must be a linear subgroup, hence we get a contradiction.
\end{proof}

Let us also mention that such surfaces $S$ have few rational curves. Indeed, either $X$ is rational, in which case the rational curves are exactly the fibers of $h$, or $X$ is not rational, which implies that $S$ does not contain any rational curve.

\newpage

\begin{proposition}
    Let $S$ be a smooth projective surface. The following are equivalent:
    \begin{enumerate}
        \item There exists an elliptic curve $E$ acting faithfully on $S$,
        \item There exist an elliptic curve $E$, a finite subgroup scheme $G\subset E$ and a $G$-normal curve $X$ such that $S = E\times^G X$,
        \item The surface $S$ is relatively minimal and is equipped with a strongly isotrivial elliptic fibration $S\to Y$.
    \end{enumerate}
\end{proposition}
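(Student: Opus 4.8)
The plan is to prove the three conditions equivalent by establishing the cycle of implications $(3) \Rightarrow (1) \Rightarrow (2) \Rightarrow (3)$, which suffices to make them pairwise equivalent. Two of the three arrows are essentially already available in the text. The implication $(3) \Rightarrow (1)$ is precisely \Cref{lem:first}: if $S$ is relatively minimal and equipped with a strongly isotrivial elliptic fibration, then the action of $E$ on the generic fiber extends to a regular faithful action of $E$ on the whole surface. For $(1) \Rightarrow (2)$, suppose $E$ acts faithfully on the smooth projective surface $S$. Since $E$ is connected and the action is nontrivial, the generic orbit is one-dimensional and the categorical quotient by $E$ is a curve; applying Brion's structure theorem for faithful actions of abelian varieties \cite[Proposition 5.6]{Brion2} yields a finite subgroup scheme $G \subset E$ and a $G$-normal curve $X$ together with an $E$-equivariant isomorphism $S \cong E \times^G X$, which is exactly assertion $(2)$.

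The substance of the argument is the remaining implication $(2) \Rightarrow (3)$, which I would carry out in two steps. By hypothesis $S = E \times^G X$ is already a smooth projective surface; this is consistent with \Cref{regular}, which relates the $G$-normality of $X$ to the smoothness of the contracted product. First, $S$ is relatively minimal by \Cref{lem:minimal}: every curve on $S$ has non-negative self-intersection, so $S$ contains no $(-1)$-curve. Second, I take $f \colon S \to Y \defeq X/G$ to be the categorical quotient by $E$ recorded in \Cref{diagram}, and claim it is a strongly isotrivial elliptic fibration. Over the nonempty open subset $Y_{\text{fr}}$, the fibration $f$ has fibers isomorphic to $E$, each acted on by the residual $E$-action as an $E$-torsor. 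Since the generic point $\eta$ of $Y$ lies in $Y_{\text{fr}}$, the generic fiber $S_\eta$ is a torsor under $E_\eta = E \times_{\Spec k} \Spec k(Y)$. The relative Picard functor commutes with base change to $\eta$, so $(\underline{\Pic}^0_{S/Y})_\eta = \underline{\Pic}^0_{S_\eta/\eta}$, and because the $\underline{\Pic}^0$ of a torsor under an elliptic curve is canonically that same elliptic curve, this equals $E \times_{\Spec k} \Spec k(Y)$. This is exactly the condition of \Cref{def:stronglyiso}, so $f$ is a strongly isotrivial elliptic fibration and $(3)$ holds.

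I expect the delicate point to be the verification of the $\underline{\Pic}^0$ identity in $(2) \Rightarrow (3)$: one must check that the generic fiber $S_\eta$ is genuinely a torsor under the \emph{constant} elliptic curve $E_\eta$, rather than an isotrivial family that only becomes trivial after a further field extension, and that forming $\underline{\Pic}^0$ is compatible with passing to the generic point. This is where the explicit description of $f$ over $Y_{\text{fr}}$ from \Cref{diagram} and the identification of the Jacobian of an $E$-torsor with $E$ itself do the essential work; the rest of the cycle rests on the already-established \Cref{lem:first}, \Cref{lem:minimal}, \Cref{regular}, and Brion's structure theorem.
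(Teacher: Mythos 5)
Your proposal is correct and follows essentially the same route as the paper: both rest on \Cref{lem:first} for the implication from the fibration condition to the faithful action, on \cite[Proposition 5.6]{Brion2} for passing from the action to the contracted product, and on \Cref{lem:minimal} together with the structure of $f\colon S\to Y$ from \Cref{diagram} for the converse. The only difference is organizational: the paper proves $(1)\Leftrightarrow(2)$ directly and cites \Cref{lem:first} for the equivalence with $(3)$, leaving the verification that $S=E\times^G X$ yields a strongly isotrivial fibration implicit, whereas you arrange the implications in a cycle and spell out that verification (the generic fiber is an $E_\eta$-torsor, hence its Jacobian is the constant curve $E_\eta$), which is a worthwhile elaboration rather than a different method.
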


\begin{proof}
    (1) implies (2) is proven in \cite[Proposition 5.6]{Brion2}; and conversely, the elliptic curve $E$ acts on the first coordinate of the contracted product of $E\times^G X$. The equivalence with (3) follows from Lemma \ref{lem:first}.
\end{proof}

\subsection{Betti numbers}
We now compute the Betti numbers of the elliptic surface $S$.
Let $l$ be a prime number distinct from $p$. As in \cite[Section 3.2]{Liedtke}, we define the \emph{$i$-th Betti number} of $S$ as 
\[
b_i(S) \defeq \dim H^i_{\text{ét}} (S,\Q_l), 
\]
which is independent of the choice of $l$. It is useful to fix the following notation for a scheme $X$:
\[
H^r_{\text{ét}} (X,\Q_l) = (\lim_{\longleftarrow} H^r_{\text{ét}} (X, \Z/l^n\Z)) \otimes_{\Z_l} \Q_l = H^r_{\text{ét}} (X,\Z_l) \otimes \Q_l.
\]
Let us recall a few results of étale cohomology which we are going to need in our computation below. The first one is a rather fundamental property, which turns out useful when dealing with infinitesimal group schemes. It is the \emph{topological invariance} of étale cohomology \cite[\href{https://stacks.math.columbia.edu/tag/03SI}{Proposition 03SI}]{stacks-project}: let $j \colon Z \rightarrow X$ be a universal homeomorphism (for example, the closed immersion defined by a nilpotent sheaf of ideals) and let $\mathcal{F}$ be an abelian sheaf on $X$. Then
\begin{align}
\label{topologicalinvariance}
H^r_{\text{ét}} (X, \mathcal{F}) = H^r_{\text{ét}} (Z, j^*\mathcal{F}) \quad \text{for all } r.
\end{align}
The second one is an application of the proper base change Theorem stated in \cite[\href{https://stacks.math.columbia.edu/tag/0DDF}{Lemma 0DDF}]{stacks-project}: let $f \colon X \rightarrow S$ be a proper morphism of schemes, and let $s$ be a geometric point of $S$. Then for any torsion abelian sheaf $\mathcal{F}$ on $X_{\text{ét}}$ we have
\begin{align}
\label{cohomology_fibers}
(R^i f_* \mathcal{F} )_s = H^i_{\text{ét}}(X_s, \mathcal{F}_s) \quad \text{for all } i.
\end{align}

Let us first clarify what we mean by a $G$-action on the étale cohomology with coefficients in $\Q_l$. If $G$ is a constant group, one can look at \cite[Chapter 5]{Tohoku}. By \emph{dévissage}, it is enough to define such an action for an infinitesimal group scheme.

\begin{lemma}
    Let $G$ an infinitesimal group scheme acting on a quasi-projective variety. Then there is a natural action of $G$ on $H^i_{\text{ét}}(T,\Q_l)$ for all $i \geq 0$.
\end{lemma}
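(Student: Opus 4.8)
The plan is to produce the $G$-action on $l$-adic cohomology by functoriality, being careful that for an infinitesimal group scheme the ``action'' is only a formal one at the level of $k$-points (where $G(k) = \{e\}$), so we cannot simply pull back along the action of group elements. Instead, I would work with the action morphism itself. Let $a\colon G \times T \to T$ denote the action and $p\colon G \times T \to T$ the second projection. First I would invoke the topological invariance of étale cohomology \eqref{topologicalinvariance}: since $G$ is infinitesimal, the inclusion of the reduced point $\Spec k = G_{\mathrm{red}} \hookrightarrow G$ is a universal homeomorphism, hence so is the base change $T = \{e\}\times T \hookrightarrow G \times T$. Therefore both $a^*$ and $p^*$ induce \emph{the same} canonical isomorphism
\[
a^*,\ p^* \colon H^i_{\text{ét}}(T,\Q_l) \stackrel{\sim}{\longrightarrow} H^i_{\text{ét}}(G\times T,\Q_l),
\]
because they agree after restriction along the universal homeomorphism $T \hookrightarrow G\times T$ (both compositions $G\times T \to T$ restrict to the identity on the reduced subscheme $\{e\}\times T$).

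The key conceptual step is to promote this into a genuine action. I would phrase the action as a coaction: the comultiplication and the group axioms of $G$ translate, via the action morphism $a$, into the statement that the composite isomorphism $\theta \defeq (a^*)^{-1}\circ p^*$ is the identity. Concretely, one checks the associativity and unit diagrams for $a$ induce the corresponding cocycle identity on cohomology; but since every relevant map is an isomorphism forced by topological invariance, the action is automatically trivial on each $H^i_{\text{ét}}(T,\Q_l)$ as an abstract module. The content of the lemma is thus not that the action is nontrivial, but merely that it is \emph{well-defined and natural}: the functoriality of $H^i_{\text{ét}}(-,\Q_l)$ applied to $a$, $p$ and the structure morphisms of $G$ endows the cohomology with a compatible $G$-module structure, where here ``$G$-module'' is understood in the appropriate scheme-theoretic (comodule) sense over $\Q_l$.

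The main obstacle, and the reason the lemma needs stating at all, is precisely that naive functoriality fails: $G$ has no nontrivial $k$-points, so one cannot define the action element-by-element as in the constant case treated in \cite[Chapter 5]{Tohoku}. The resolution is the identification of $a^*$ with $p^*$ via \eqref{topologicalinvariance}, which is the one nontrivial input. I would therefore structure the proof as: (i) reduce to the infinitesimal case by the \emph{dévissage} already mentioned before the statement, using the exact sequence \eqref{etale_connected} to split off the étale quotient $\pi_0(G)$ (handled by the constant-group case) from the infinitesimal part $G^0$; (ii) for $G$ infinitesimal, apply topological invariance to $\{e\}\times T \hookrightarrow G\times T$ to see $a^*$ and $p^*$ coincide; (iii) verify the comodule axioms hold by applying the same identification to $G\times G \times T$, so that the associativity constraint for $a$ becomes a tautology in cohomology. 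The verification in (iii) is routine once (ii) is in hand, since every map in sight is the canonical isomorphism supplied by topological invariance.
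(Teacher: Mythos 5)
Your proof is correct, but it follows a genuinely different route from the paper's. The paper reduces to torsion coefficients $\Z/l^n\Z$, uses the quasi-projectivity hypothesis to compute étale cohomology by Čech cohomology, and then invokes the \emph{infinitesimal lifting property} of étale morphisms: since $G$ is infinitesimal, the $G$-action on $T$ lifts uniquely to every étale cover $U \to T$, so $G$ acts on each Čech complex and hence on its cohomology, the $\Q_l$-statement being obtained in the inverse limit. You instead apply topological invariance directly to the nilpotent closed immersion $\{e\}\times T \hookrightarrow G\times T$, deduce that $a^*$ and $p^*$ are the same isomorphism (both are inverse to $\iota^*$, since $a\circ\iota = p\circ\iota = \id_T$), and define the action as the resulting coaction, which is then canonically trivial and satisfies the axioms tautologically. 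The two arguments exploit infinitesimality through two sides of the same coin --- unique lifting along nilpotent thickenings versus blindness of the étale site to nilpotents --- but yours buys two things: it never uses quasi-projectivity, so it proves a slightly more general statement, and it makes explicit that the infinitesimal part acts trivially on cohomology, a fact the paper uses only implicitly later, in the step $H^r_{\text{ét}}(U,\Q_l)^{\pi_0(G)} = H^r_{\text{ét}}(U,\Q_l)^G$ of the proof of \Cref{cohomology_quotient}. What the paper's construction buys in exchange is an honest chain-level action that runs parallel to the classical construction for constant groups in \cite{Tohoku}, making naturality and compatibility with the quotient morphisms transparent. Two minor remarks: step (i) of your outline (dévissage via the connected-étale sequence) is superfluous, since the lemma already assumes $G$ infinitesimal --- the dévissage belongs to the discussion preceding the lemma, not to its proof; and both your argument and the paper's share the same mild informality about what an action of a nonreduced group scheme on a $\Q_l$-vector space means, which you at least flag explicitly by casting it as a comodule-type structure.
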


\begin{proof}
    It suffices to define a natural $G$-action on the groups $H^i_{\text{ét}}(T,\mathcal{F})$, where $\mathcal{F} \defeq \Z/l^n\Z$, for all $n$ and all $i$; then the desired $G$-action is obtained by taking the inverse limit. Since $T$ is by assumption quasi-projective, by \cite[Chapter III, Theorem 2.17]{Mil}, the étale cohomology groups can be computed using Čech cohomology 
    $\check{H}^\bullet(T,\mathcal{F})$. By construction, the latter is defined as being the inverse limits of the Čech cohomologies $\check{H}^i(\mathcal{U},\mathcal{F})$, where $\mathcal{U} = (U_i \to T)_i$ ranges among all the étale covers of $T$. Thus, it suffices to define the $G$-action on a fixed $\check{H}^i(\mathcal{U},\mathcal{F})$. For each étale map $U \to T$, the $G$-action on $T$ lifts uniquely to a $G$-action on $U$, thanks to the infinitesimal lifting property of étale morphism; see \cite[Remark 18.4(3)]{GortzWedhorn2}. Next, let us consider the Čech complex $\mathcal{C}^\bullet(\mathcal{U},\mathcal{F}) = (\mathcal{C}^j(\mathcal{U},\mathcal{F}))_j$ which computes the cohomology for the étale cover $\mathcal{U}$. Each $\mathcal{C}^j(\mathcal{U},\mathcal{F})$ is now equipped with a canonical action of $G$ coming from the action on $T$. It follows that the same holds for $\check{H}^i(\mathcal{U},\mathcal{F})$, and we are done.
\end{proof}

\begin{lemma}
\label{etale_infinitesimal} 
    Let $G$ be an infinitesimal group scheme acting on a variety $T$ and let $\sigma \colon T \rightarrow S$ be the quotient by $G$. Then
    \[
    H^i_{\text{ét}} (S,\Q_l) = H^i_{\text{ét}} (T, \Q_l) \quad \text{for all } i.
    \]
\end{lemma}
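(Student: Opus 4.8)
The plan is to reduce the $G$-equivariant statement about étale cohomology with $\Q_l$-coefficients to the case of a single infinitesimal group scheme, and then exploit the topological invariance of étale cohomology. The crucial observation is that the quotient morphism $\sigma\colon T\to S$ by an infinitesimal group scheme $G$ is a \emph{universal homeomorphism}: since $G$ is infinitesimal, its order is a power of $p$, and $\sigma$ is finite, surjective, and purely inseparable (bijective on points with radicial residue extensions). Indeed $\sigma$ factors the relative Frobenius of $T$ after a suitable power, so set-theoretically $T$ and $S$ coincide. Therefore I would first establish that $\sigma$ is a universal homeomorphism.

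Once that is in hand, the equality follows almost formally. By definition
\[
H^i_{\text{ét}}(S,\Q_l) = \bigl(\varprojlim_n H^i_{\text{ét}}(S,\Z/l^n\Z)\bigr)\otimes_{\Z_l}\Q_l,
\]
and likewise for $T$. For each fixed $n$, the sheaf $\Z/l^n\Z$ is a torsion abelian sheaf on $S_{\text{ét}}$, and topological invariance \eqref{topologicalinvariance} applied to the universal homeomorphism $\sigma$ gives the isomorphism
\[
H^i_{\text{ét}}(S,\Z/l^n\Z) \xrightarrow{\ \sim\ } H^i_{\text{ét}}(T,\sigma^*\Z/l^n\Z) = H^i_{\text{ét}}(T,\Z/l^n\Z),
\]
using that $\sigma^*$ carries the constant sheaf $\Z/l^n\Z$ on $S$ to the constant sheaf $\Z/l^n\Z$ on $T$. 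These isomorphisms are natural in $n$, hence compatible with the transition maps of the inverse system; passing to the inverse limit over $n$ and tensoring with $\Q_l$ over $\Z_l$ then yields the claimed equality for every $i$.

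The only genuine subtlety, and thus the main point to be careful about, is verifying that $\sigma$ really is a universal homeomorphism rather than merely a finite morphism. This is where infinitesimality of $G$ is essential: for a general finite group scheme the quotient need not be radicial, but for $G^0$ infinitesimal every $G$-orbit is a single point topologically (the orbit $G\cdot x \cong G/\Stab_G(x)$ is an infinitesimal scheme, hence supported at one point), so the set-theoretic fibers of the categorical quotient described in \Cref{lem:categorical_quotient} are singletons and $\sigma$ is bijective; combined with finiteness and the fact that $\sigma$ is purely inseparable, this gives a universal homeomorphism after base change. I would flag that one must check the base-change stability to invoke the \emph{universal} homeomorphism hypothesis of \eqref{topologicalinvariance}, but this follows since finiteness, surjectivity and radiciality are all stable under arbitrary base change.

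Finally, I note that this lemma is exactly the tool needed to pass between $S$ and its infinitesimal covers when computing the Betti numbers $b_i(S)$, since it lets one strip away the infinitesimal part $G^0$ of the finite group scheme $G$ from \eqref{etale_connected} without affecting étale cohomology, leaving only the constant quotient $\pi_0(G)$ to be treated by the classical theory of \cite{Tohoku}.
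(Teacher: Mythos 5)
Your proposal is correct and follows exactly the paper's argument: the quotient by an infinitesimal group scheme is a universal homeomorphism (finite, surjective, radicial), so the topological invariance of \'etale cohomology (\ref{topologicalinvariance}) gives the isomorphism at each torsion level, and the claim follows by passing to the limit and tensoring with $\Q_l$. The paper states this in a single sentence; your additional verifications (why $\sigma$ is a universal homeomorphism, stability under base change, compatibility with the inverse system) simply make explicit what the paper leaves implicit.
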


\begin{proof}
    The quotient morphism $\sigma$ being a universal homomorphism, this follows from the topological invariance property (\ref{topologicalinvariance}).  
\end{proof}

\begin{lemma}
    \label{etale_finite}
 Let $G$ be a finite constant group acting on a variety $T$ and let $\sigma \colon T \rightarrow S$ be the quotient by $G$. Assume that $S$ exists in the category of schemes. Then
 \[
 H^r_{\text{ét}} (S,\Q_l) = H^r_{\text{ét}} (T, \Q_l)^G \quad \text{for all } r.
 \]
\end{lemma}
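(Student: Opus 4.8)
The plan is to compute the $G$-invariants of the étale cohomology by relating them to the cohomology of the quotient $S$, using that a finite constant group $G$ acting on a variety $T$ with quotient $S$ gives a situation where $\Q_l$-coefficients make the cohomology of $S$ into exactly the invariants of the cohomology of $T$. The essential reason this works is that $G$ is finite and we use characteristic-zero coefficients $\Q_l$, so that the order $|G|$ is invertible in $\Q_l$ and a standard averaging (transfer) argument applies.

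First I would reduce to torsion coefficients and pass to the limit: since $H^r_{\mathrm{\acute{e}t}}(-,\Q_l) = (\varprojlim_n H^r_{\mathrm{\acute{e}t}}(-,\Z/l^n\Z))\otimes_{\Z_l}\Q_l$, and taking $G$-invariants commutes with inverse limits and with tensoring by the flat $\Z_l$-module $\Q_l$, it suffices to establish a compatible statement at finite level that survives inverting $l$. The central tool is the Hochschild–Serre spectral sequence for the quotient map $\sigma\colon T \to S$, which for a finite group $G$ takes the form
\[
E_2^{s,t} = H^s(G, H^t_{\mathrm{\acute{e}t}}(T,\mathcal{F})) \Longrightarrow H^{s+t}_{\mathrm{\acute{e}t}}(S,\mathcal{F}),
\]
for a suitable sheaf $\mathcal{F}$. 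Here one must be slightly careful because $\sigma$ need not be étale at the ramification locus; the clean way is to note that after inverting $|G|$ the group cohomology $H^s(G,M)$ vanishes for $s>0$ on any $\Z[1/|G|]$-module $M$ (as $|G|$ acts invertibly and kills higher group cohomology). Working with $\Q_l$-coefficients, and since $\gcd(|G|,l)$ can be arranged or handled by noting $|G|$ is invertible in $\Q_l$ whenever $l\nmid |G|$, the spectral sequence collapses to $E_2^{0,t} = H^t_{\mathrm{\acute{e}t}}(T,\Q_l)^G$, yielding the desired equality.

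The cleaner alternative, which I would actually carry out, avoids spectral sequences entirely via the transfer map. Since $\sigma\colon T\to S$ is finite and $G$ acts with quotient $S$, on $\Q_l$-cohomology there are natural maps $\sigma^*\colon H^r_{\mathrm{\acute{e}t}}(S,\Q_l)\to H^r_{\mathrm{\acute{e}t}}(T,\Q_l)$ and a transfer (trace) map $\sigma_*$ in the other direction, with composition $\sigma_*\circ\sigma^* = [G\,{:}\,1]\cdot\mathrm{id} = |G|\cdot\mathrm{id}$. Because $|G|$ is invertible in $\Q_l$ (as $l\neq p$ and, at worst, by separately treating whether $l\mid|G|$—but over $\Q_l$ the integer $|G|$ is a unit precisely when $l\nmid|G|$, so one chooses $l$ appropriately, which is legitimate since the Betti numbers are independent of $l$), the map $\sigma^*$ is injective and $\frac{1}{|G|}\sigma_*$ is a retraction. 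The image of $\sigma^*$ lands in the $G$-invariants since $\sigma$ is $G$-invariant; conversely the averaging operator $\frac{1}{|G|}\sum_{g\in G} g^*$ projects $H^r_{\mathrm{\acute{e}t}}(T,\Q_l)$ onto its invariants and factors through $\sigma^*\circ(\tfrac{1}{|G|}\sigma_*)$, showing $\sigma^*$ identifies $H^r_{\mathrm{\acute{e}t}}(S,\Q_l)$ with $H^r_{\mathrm{\acute{e}t}}(T,\Q_l)^G$.

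The main obstacle I anticipate is the failure of $\sigma$ to be étale (it is only finite, ramified along the fixed loci), which is why one cannot naively invoke étale descent; the resolution is precisely the transfer argument, whose construction of $\sigma_*$ for a finite surjective morphism requires care but is standard for $\Q_l$-coefficients. A secondary technical point is ensuring $|G|$ is invertible in the coefficient ring: this is handled by exploiting that $b_r(S)$ is independent of the auxiliary prime $l\neq p$, so one may compute with an $l$ coprime to $|G|$, after which the equality of dimensions $b_r(S) = \dim_{\Q_l} H^r_{\mathrm{\acute{e}t}}(T,\Q_l)^G$ transfers back to the original statement.
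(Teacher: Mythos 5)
Your transfer/averaging argument is a valid route and genuinely different from the paper's. The paper argues at the level of sheaves: the fibers of $\sigma$ are finite $G$-orbits (up to nilpotents), so by topological invariance and proper base change $R^q\sigma_*\Z/l^n\Z$ vanishes for $q>0$; the Leray spectral sequence then degenerates to $H^r_{\text{ét}}(T,\Z/l^n\Z)=H^r_{\text{ét}}(S,\sigma_*\Z/l^n\Z)$, and after passing to the limit one concludes from the stalkwise identification $(\sigma_*\Q_l)^G\simeq\Q_l$. Your argument instead works on cohomology groups via the identities $\sigma_*\sigma^*=\vert G\vert\cdot\mathrm{id}$ and $\sigma^*\sigma_*=\sum_{g}g^*$. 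Two caveats. First, the construction of the transfer $\sigma_*$ for this finite but generally non-flat, ramified morphism is exactly where the content lies, and the clean way to build it (identify $H^r_{\text{ét}}(T,\Q_l)$ with $H^r_{\text{ét}}(S,\sigma_*\Q_l)$ using that finite morphisms have no higher direct images, then use the sheaf endomorphism $\sum_g g$ of $\sigma_*\Q_l$, whose image is the invariant subsheaf $\Q_l$) passes through precisely the two facts the paper proves; so, fleshed out, your proof contains the paper's rather than bypassing it. Second, the Hochschild--Serre variant you sketch first should be discarded, as you half-suspect: that spectral sequence is available for étale $G$-torsors only, and for a ramified quotient, asserting its existence with $\Q_l$-coefficients (where it would collapse to the $E_2^{0,t}$ column) is equivalent to the lemma itself, hence circular; invertibility of $\vert G\vert$ repairs its shape, not its existence.

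There is also one genuine error: the claim that the integer $\vert G\vert$ is a unit in $\Q_l$ ``precisely when $l\nmid\vert G\vert$''. That is the criterion in $\Z_l$ or $\Z/l^n\Z$; the field $\Q_l$ has characteristic zero, so every nonzero integer is invertible in it, for every $l\neq p$. Consequently the whole workaround you build on this premise --- choose $l$ coprime to $\vert G\vert$, prove the statement for that $l$, and transfer back using independence of Betti numbers on $l$ --- must be deleted, and fortunately it can be: the averaging argument works verbatim for every $l\neq p$. Keeping it would leave a real gap, for two reasons. The quantity that would have to be independent of $l$ is $\dim_{\Q_l} H^r_{\text{ét}}(T,\Q_l)^G$, which is an invariant of the $G$-representation (by averaging it equals $\frac{1}{\vert G\vert}\sum_{g\in G}\mathrm{tr}\bigl(g^*\mid H^r_{\text{ét}}(T,\Q_l)\bigr)$), not of the plain Betti numbers; its $l$-independence requires $l$-independence of the traces of the $g^*$, which you neither prove nor cite. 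Moreover, the lemma is used downstream (in \Cref{cohomology_quotient} and \Cref{Betti}) as an isomorphism induced by pullback, compatible with Künneth decompositions, not as a mere equality of dimensions for one auxiliary prime.
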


\begin{proof}
    Let $s$ be a geometric point of $S$; then the fiber above $s$ satisfies
    \[
    (T_s)_{\text{red}} = (G/\Stab_G(s)) \times \Spec \kappa(s),
    \]    
    where $\Stab_G(s)$ denotes the stabilizer at the point $s$. The underlying topological space of $T_s$ consists of a finite number of copies of $\Spec \kappa(s)$, which are permuted by the $G$-action. This, together with (\ref{topologicalinvariance}), implies that
    \[
    H^i_{\text{ét}} (T_s, \Z/l^n\Z) = 0 \quad \text{for all } i >0.
    \]
    
    The above vanishing, together with the equality (\ref{cohomology_fibers}), guarantees that the Leray spectral sequence
    \[
    E_2^{p,q}= H^p_{\text{ét}} (S, R^q\sigma_* \Z/l^n\Z) \, \Longrightarrow \, H^{p+q}_{\text{ét}} (T,\Z/l^n\Z)
    \]
    degenerates at the page $E_2$. Next, taking the inverse limits to $\Z_l$, tensoring by $\Q_l$ and finally taking $G$-invariants, we obtain
    \[
    H^r_{\text{ét}} (T,\Q_l) = H^r_{\text{ét}}(S,\sigma_*\Q_l) \quad \text{for all } r.
    \]
    
    Finally, again by the topological invariance property, we have that $\Q_l \simeq (\sigma_* \Q_l)^G$. Hence, by taking $G$ invariants on both sides, we get the desired equality.
\end{proof}

As a consequence of the above computations, we get the following result, which implies that \emph{the Betti numbers of $S$ are the same as those of $E \times Y$.}

\begin{corollary}
\label{cohomology_quotient}
Let $G$ be any finite group scheme acting on a variety $T$ and let $\sigma \colon T \rightarrow S$ be the quotient by $G$. Assume that $S$ exists in the category of schemes. Then
\[
 H^r_{\text{ét}} (S,\Q_l) = H^r_{\text{ét}} (T, \Q_l)^G \quad \text{for all } r.
 \]
\end{corollary}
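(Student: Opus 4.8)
The plan is to reduce the general case to the two already treated—infinitesimal (\Cref{etale_infinitesimal}) and finite constant (\Cref{etale_finite})—by a \emph{dévissage} along the canonical exact sequence \eqref{etale_connected}. Writing $1 \to G^0 \to G \to \pi_0(G) \to 1$ with $G^0$ infinitesimal and $\pi_0(G)$ finite constant, I would first factor the quotient morphism $\sigma$ into two steps. Since $G^0$ is normal in $G$, the group $\pi_0(G) = G/G^0$ acts on the intermediate quotient $T/G^0$ (which exists by \Cref{lem:categorical_quotient}, every orbit being finite and hence contained in an affine open), and the composite
\[
T \overset{\sigma_0}{\longrightarrow} T/G^0 \overset{\bar\sigma}{\longrightarrow} (T/G^0)/\pi_0(G)
\]
is canonically identified with $\sigma\colon T \to S$.

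For the first arrow, \Cref{etale_infinitesimal} gives that $\sigma_0^*\colon H^r_{\text{ét}}(T/G^0,\Q_l) \to H^r_{\text{ét}}(T,\Q_l)$ is an isomorphism for all $r$. The point I would stress is that $\sigma_0$ is $G$-equivariant, where $G$ acts on $T/G^0$ through its quotient $\pi_0(G)$; hence the isomorphism $\sigma_0^*$ intertwines the two $G$-actions on cohomology. Since on the source the $G$-action factors through $\pi_0(G)$ by construction of the quotient, the same must then hold on the target: the $G$-action on $H^r_{\text{ét}}(T,\Q_l)$ factors through $\pi_0(G)$, so that $G^0$ acts trivially and
\[
H^r_{\text{ét}}(T,\Q_l)^G = H^r_{\text{ét}}(T,\Q_l)^{\pi_0(G)}.
\]

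For the second arrow I would apply \Cref{etale_finite} to the finite constant group $\pi_0(G)$ acting on $T/G^0$ with quotient $S$, obtaining $H^r_{\text{ét}}(S,\Q_l) = H^r_{\text{ét}}(T/G^0,\Q_l)^{\pi_0(G)}$. Transporting through the $\pi_0(G)$-equivariant isomorphism $\sigma_0^*$ and combining with the previous paragraph then yields
\[
H^r_{\text{ét}}(S,\Q_l) = H^r_{\text{ét}}(T/G^0,\Q_l)^{\pi_0(G)} \cong H^r_{\text{ét}}(T,\Q_l)^{\pi_0(G)} = H^r_{\text{ét}}(T,\Q_l)^G,
\]
which is the claim.

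I expect the main obstacle to be the bookkeeping around the $G$-action on cohomology: one must check that the action furnished by the infinitesimal construction and the one defined for the constant quotient are compatible under the factorization, and in particular justify cleanly that $G^0$ acts trivially on $\Q_l$-cohomology. The argument above handles this via the equivariance of $\sigma_0^*$, which sidesteps any direct analysis of the $G^0$-action on cohomology classes; the remaining care is simply ensuring that the intermediate quotient $T/G^0$ exists and that the identification $(T/G^0)/\pi_0(G) \cong T/G$ is valid, both of which follow from the general theory of quotients by finite group schemes recalled in \Cref{lem:categorical_quotient}.
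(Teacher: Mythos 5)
Your proposal is correct and follows essentially the same route as the paper: the same dévissage along the connected--étale sequence \eqref{etale_connected}, factoring $\sigma$ through the intermediate quotient $T/G^0$ and applying \Cref{etale_finite} to the constant quotient $\pi_0(G)$ and \Cref{etale_infinitesimal} to the infinitesimal part. The only difference is expository --- you spell out the $G$-equivariance of the isomorphism induced by the quotient $T \to T/G^0$ and why invariants under $G$ and under $\pi_0(G)$ agree, points the paper's chain of equalities leaves implicit.
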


\begin{proof}
 Thanks to the connected-étale exact sequence of $G$ recalled in (\ref{etale_connected}), we have a factorisation
 \begin{center}
     \begin{tikzcd}
         \sigma \colon T \arrow[rr, "\varphi"] && U \arrow[rr, "\psi"] && S,
     \end{tikzcd}
 \end{center}
 where $\varphi$ and $\psi$ are respectively the quotient by $G^0$ and by $\pi_0(G)$. Thus, it suffices to apply \Cref{etale_finite} to the morphism $\psi$ and then \Cref{etale_infinitesimal} to the morphism $\varphi$ in order to get isomorphisms
 \[
 H^r_{\text{ét}} (S,\Q_l) = H^r_{\text{ét}} (U, \Q_l)^{\pi_0(G)} = H^r_{\text{ét}} (U, \Q_l)^G 
 = H^r_{\text{ét}}(T,\Q_l)^G,
 \]
 and we conclude.
\end{proof}

\begin{lemma}
\label{lifting_curve}
    Let $Y$ be a smooth projective curve over $k$. There exists a smooth complex projective curve $Y_0$ such that
    \[
        H^i_{\text{ét}}(Y,\Q_l) = H^i_{\text{sing}}(Y_0,\Q_l) \text{ for all }i, 
    \]
    where $H^i_{\text{sing}}$ denotes the singular cohomology. In particular, $g(Y) = g(Y_0)$.
\end{lemma}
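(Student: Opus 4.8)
The plan is to realize the $l$-adic cohomology of $Y$ as the singular cohomology of a complex curve by passing through a lift to characteristic zero, combining lifting, proper base change and the comparison theorem exactly as announced in the introduction. I first record that both sides are controlled by a single integer: for a smooth projective geometrically connected curve of genus $g$ one has $H^0_{\text{ét}}=H^2_{\text{ét}}=\Q_l$ and $H^1_{\text{ét}}\cong T_l(\mathrm{Jac})\otimes_{\Z_l}\Q_l$ of dimension $2g$, with higher cohomology vanishing; the same shape holds for the singular cohomology of a complex curve. Hence producing $Y_0$ will amount to matching genera, and the isomorphism ``for all $i$'' follows automatically.

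For the lift I work over the Witt vectors $W\defeq W(k)$, a complete discrete valuation ring with residue field $k$ and fraction field $K\defeq\Frac(W)$ of characteristic zero. Deformations of $Y$ over Artinian quotients of $W$ are unobstructed, since the obstruction space $H^2(Y,\mathcal{T}_Y)$ vanishes as $\dim Y=1$; an ample line bundle likewise lifts, the obstruction lying in $H^2(Y,\mathcal{O}_Y)=0$. Grothendieck's existence theorem then algebraizes the resulting compatible system into a smooth projective curve $f\colon\mathcal{Y}\to\Spec W$ with special fiber $Y$.

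Next I apply base change for étale cohomology with $\Z/l^n\Z$-coefficients ($l\neq p$): proper base change identifies the stalks of $R^i f_*\,\Z/l^n\Z$ with the cohomology of the geometric fibers, and the smoothness and properness of $f$ make these sheaves lisse, so over the henselian local base $\Spec W$ the specialization map is an isomorphism between the special and the geometric generic fibers. Taking the inverse limit and tensoring with $\Q_l$ gives $H^i_{\text{ét}}(Y,\Q_l)\cong H^i_{\text{ét}}(\mathcal{Y}_{\bar\eta},\Q_l)$, where $\mathcal{Y}_{\bar\eta}$ is a smooth projective curve over an algebraically closed field of characteristic zero, of the same genus as $Y$ since the genus is locally constant in this smooth proper family. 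Choosing a smooth complex projective curve $Y_0$ with $g(Y_0)=g(Y)$ (such a curve exists for every genus), Artin's comparison theorem yields $H^i_{\text{ét}}(Y_0,\Q_l)\cong H^i_{\text{sing}}(Y_0,\Q_l)$, and the computation recalled above shows all these groups agree degree by degree; comparing $H^1$ forces $g(Y)=g(Y_0)$.

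The step demanding the most care is the change of base field. When $k$ is large, $K=\Frac(W(k))$ need not embed into $\C$, so one cannot simply set $Y_0=\mathcal{Y}_K\times_K\C$. The cleanest remedy is the reduction of the opening paragraph: since the cohomology of a curve is determined by its genus, it suffices to transport the single invariant $g(\mathcal{Y}_{\bar\eta})=g(Y)$ and then pick any complex curve of that genus, avoiding descent of the specific model. Alternatively one spreads $\mathcal{Y}$ out over a finitely generated subring, specializes at a closed point whose residue field embeds into $\C$, and invokes invariance of $l$-adic cohomology under extension of algebraically closed fields; but the genus argument is shorter and is the one I would ultimately write.
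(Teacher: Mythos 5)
Your proposal is correct, and its core machinery (lift over the Witt vectors $W(k)$, smooth and proper base change over the strictly henselian base, Artin's comparison theorem) coincides with the paper's; the genuine difference is how the complex curve $Y_0$ is produced. The paper takes $Y_0$ to be the generic fiber of the lift itself and handles the delicate point you flag --- that $\Frac(W(k))$ need not embed into $\C$ --- by the Lefschetz principle: $Y_0$, being of finite type, is defined over a finitely generated subfield $k_0\subset \Frac(W(k))$, and such a field does embed into $\C$. You instead sidestep descent entirely: since the $l$-adic cohomology of a smooth projective curve over any algebraically closed field (with $l\neq p$) is $\Q_l,\ \Q_l^{2g},\ \Q_l$ in degrees $0,1,2$, it suffices to transport the single invariant $g$ (constant in the smooth proper family) and take \emph{any} complex curve of that genus. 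This is cleaner on the field-embedding issue, and it is all the lemma needs, since its only use in the paper (the Betti number computation of \Cref{Betti}) is dimensional; the cost is that your isomorphism is purely abstract, whereas the paper's is induced by specialization and comparison maps. One remark: your opening paragraph already renders the lifting superfluous in your own write-up --- once you invoke $H^1_{\text{ét}}(Y,\Q_l)\cong T_l(\mathrm{Jac}(Y))\otimes_{\Z_l}\Q_l$ of dimension $2g(Y)$, the lemma follows immediately from the existence of a complex curve of genus $g(Y)$ and the topology of compact Riemann surfaces, with no deformation theory or base change needed; the lifting argument is precisely what one uses to \emph{avoid} appealing to that computation, so you should commit to one of the two routes rather than run both.
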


\begin{proof}
    Let $A\defeq W(k)$ be the ring of Witt vectors. In particular, $A$ is a discrete valuation ring of characteristic $0$ with residue field $k$. Let $s$ be the closed point of $\Spec A$; by \cite[Exposé III, Théorème 7.3]{SGAI}, there exists a smooth proper morphism $\mathcal{Y}\to \Spec A$ such that $\mathcal{Y}_{s} = Y$, and the fiber $\mathcal{Y}_{\eta}$ above the generic point $\eta$ is a smooth projective curve $Y_0$ over $\Frac A$. By the Lefschetz principle, $Y_0$ is defined over a subfield $k_0\subset \Frac A$, which can be embedded in $\C$. So now we can assume that $Y_0$ is a complex smooth projective curve. Next, by \cite[VI, Corollary 4.2]{Milne}, it follows that \[
    H^i_{\text{ét}}(Y,\Q_l) = H^i_{\text{ét}}(Y_0,\Q_l).\] Since $Y_0$ is complex, the Comparison Theorem - see \cite[Theorem 2]{Artin} - gives that 
    $H^i_{\text{ét}}(Y_0,\Q_l)= H^i_{sing}(Y_0,\Q_l)$. 
\end{proof}

\begin{proposition}
\label{Betti}
    Let $S= E \times^G X$ with $G$ any finite subgroup scheme of $E$ and $X$ a $G$-normal curve. Then the Betti numbers of $S$ are as follows:
    \[
    b_1(S) = 2+2g(Y) \quad \text{and} \quad b_2(S) = 2+4g(Y).
    \]
\end{proposition}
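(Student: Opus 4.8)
The plan is to reduce the computation of the Betti numbers of $S = E \times^G X$ to those of $E \times Y$, and then to compute the latter using a lift to characteristic zero. By \Cref{cohomology_quotient} applied to the $G$-action on $T = E \times X$ with quotient $\sigma = q \colon E \times X \to S$, we have $H^r_{\text{ét}}(S, \Q_l) = H^r_{\text{ét}}(E \times X, \Q_l)^G$ for all $r$. The first step is therefore to understand the $G$-invariant part of the cohomology of the product $E \times X$.

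The key idea is to use the Künneth formula for $\ell$-adic étale cohomology, which gives
\[
H^r_{\text{ét}}(E \times X, \Q_l) = \bigoplus_{i+j=r} H^i_{\text{ét}}(E, \Q_l) \otimes H^j_{\text{ét}}(X, \Q_l),
\]
and is $G$-equivariant, where $G \subset E$ acts diagonally. Taking $G$-invariants, I would exploit the fact that $G$ acts on $E$ by translations and on $X$ by the given $G$-action. Since translations act trivially on the cohomology of an abelian variety (a point-connected group acts trivially, and translations by a finite subgroup are limits/specializations of such, or directly: translations are homotopic to the identity hence act trivially on cohomology), the action of $G$ on $H^i_{\text{ét}}(E,\Q_l)$ is trivial. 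Hence the $G$-invariants of the tensor product reduce to $H^i_{\text{ét}}(E,\Q_l) \otimes H^j_{\text{ét}}(X,\Q_l)^G$, and by \Cref{cohomology_quotient} applied to the $G$-curve $X \to Y = X/G$, one has $H^j_{\text{ét}}(X,\Q_l)^G = H^j_{\text{ét}}(Y,\Q_l)$. This yields $H^r_{\text{ét}}(S,\Q_l) = \bigoplus_{i+j=r} H^i_{\text{ét}}(E,\Q_l) \otimes H^j_{\text{ét}}(Y,\Q_l)$, i.e.\ the Betti numbers of $S$ agree with those of $E \times Y$, as announced in the corollary preceding the statement.

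It then remains to compute $b_1$ and $b_2$ of $E \times Y$. Using \Cref{lifting_curve}, the curve $Y$ lifts to a complex smooth projective curve $Y_0$ of the same genus with matching Betti numbers, and $E$ (being an elliptic curve) likewise has the standard Betti numbers $b_0 = b_2 = 1$, $b_1 = 2$; by the same lifting and comparison argument these match those of a complex elliptic curve. For complex smooth projective varieties the Künneth formula together with $b_i(Y_0) = (1, 2g(Y), 1)$ gives
\[
b_1(E \times Y) = b_1(E) + b_1(Y) = 2 + 2g(Y),
\]
\[
b_2(E \times Y) = b_0(E)b_2(Y) + b_1(E)b_1(Y) + b_2(E)b_0(Y) = 1 + 2\cdot 2g(Y) + 1 = 2 + 4g(Y),
\]
which is exactly the claim.

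The main obstacle is the justification that $G$ acts trivially on $H^i_{\text{ét}}(E, \Q_l)$. For the infinitesimal part $G^0$ this is automatic from the topological invariance used in \Cref{etale_infinitesimal} (an infinitesimal subgroup scheme acts through a universal homeomorphism). For the étale part $\pi_0(G)$, which acts by translations by honest torsion points, I would invoke the fact that translations on an abelian variety are algebraically equivalent to the identity and hence act trivially on $\ell$-adic cohomology (for instance via the rigidity of the monodromy action, or by specializing to characteristic zero where translations are homotopic to the identity). This is the one place where one must be careful that finiteness and possible non-reducedness of $G$ do not interfere; everything else is a formal consequence of Künneth, \Cref{cohomology_quotient}, and \Cref{lifting_curve}.
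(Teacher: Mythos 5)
Your proposal is correct and follows essentially the same route as the paper: reduce to the $G$-invariants of $H^*_{\text{ét}}(E\times X,\Q_l)$ via \Cref{cohomology_quotient}, apply the Künneth formula, identify $H^j_{\text{ét}}(X,\Q_l)^G$ with $H^j_{\text{ét}}(Y,\Q_l)$, and compute dimensions by lifting the curves to characteristic zero via \Cref{lifting_curve} and the comparison theorem. The only cosmetic difference is that for $b_1$ the paper handles the $E$-factor by writing $H^1_{\text{ét}}(E,\Q_l)^G = H^1_{\text{ét}}(E/G,\Q_l)$ (again via \Cref{cohomology_quotient}), whereas you invoke triviality of the translation action on $H^*_{\text{ét}}(E,\Q_l)$ in all degrees --- an argument the paper itself uses, with the same connectedness justification, for the middle Künneth term in the $b_2$ computation.
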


\begin{proof}
    Thanks to \Cref{cohomology_quotient} applied to the map $q$, in order to compute the étale cohomology of $S$ it is enough to compute the $G$-invariant part of the étale cohomology of $E \times X$. More precisely, we aim to compute the dimensions of $H^i_{\text{ét}}(E\times X,\Q_l)^G$. Let us start by the computation of $b_1$: by Künneth's formula, together with the fact that both $E$ and $X$ are connected,
    $$
    H^1_{\text{ét}}(E\times X,\Q_l)^G = H^1_{\text{ét}}(E,\Q_l)^G \oplus H^1_{\text{ét}}(X,\Q_l)^G.
    $$
    
    Let us consider the first term on the right hand side: by applying \Cref{lifting_curve} to the smooth curve $F \defeq E/G$, there exists a smooth elliptic curve $F_0$ over $\C$ such that
    \begin{align}
    \label{comparison_E}
    H^1_{\text{ét}} (E,\Q_l)^G = H^1_{\text{ét}}(F,\Q_l)= H^1_{\text{sing}} (F_0, \Q_l) \simeq H_{1,\text{sing}}(F_0,\Q_l)^\vee,
    \end{align}
    where the last isomorphism comes from Poincaré duality. The latter has dimension $2$, because $F_0$ is a complex elliptic curve hence topologically it is a complex torus. Moving on to the second term, let us apply \Cref{cohomology_quotient} to the morphism $\pi \colon X\rightarrow Y$ and then apply \Cref{lifting_curve} to the smooth curve $Y$. This yields that there is a smooth curve $Y_0$ over $\C$, with same genus as $Y$, such that
    \begin{align}
    \label{comparison_curveY}
    H^1_{\text{ét}} (X,\Q_l)^G = H^1_{\text{ét}} (Y,\Q_l) = H^1_{\text{sing}} (Y_0,\Q_l) \simeq  H_{1,\text{sing}} (Y_0,\Q_l)^\vee.
    \end{align}
    The last term, for the same reason as for the elliptic curve above, has dimension $2g(Y)$. Thus, we get the desired value for the first Betti number.

    Moving on to $b_2(S)$, the Künneth's formula together with \Cref{cohomology_quotient} applied to the quotient morphism $\pi$ yields
    \[
    H^2_{\text{ét}} (E \times X, \Q_l)^G = H^2_{\text{ét}} (Y,\Q_l) \oplus \left( H^1_{\text{ét}} (F,\Q_l) \otimes H^1_{\text{ét}} (Y,\Q_l) \right) \oplus H^2_{\text{ét}}(F,\Q_l).
    \]
    Concerning the middle term, taking $G$-invariants commutes with the tensor product is due to the fact that $G \subset E$ acts on $E$ by translation and that $E$ is connected, hence the $G$-action on $H^1_{\text{ét}}(E,\Q_l)$ is trivial. Thanks to Poincaré duality, together with the isomorphisms (\ref{comparison_curveY}) and (\ref{comparison_E}), computing dimensions yields
    \[
    b_2(S) = b_2(E \times X)^G = b_0(Y) + b_1(F_0)b_1(Y_0) + b_0(F) = 2+4g(Y)
    \]
    and we are done.
\end{proof}

\subsection{Dualizing sheaf and Kodaira dimension} 
\label{sec:dualizing}
Let $Y$ be a locally Noetherian scheme and let $g \colon X \rightarrow Y$ be a quasi-projective morphism which is a locally of complete intersection, i.e.\ which factors through a scheme $Z$ into a regular embedding $i$ followed by a smooth morphism. Then the \emph{canonical sheaf} of $g$ is defined as 
    \[
    \omega_{X/Y} := \det (\mathcal{C}_{X/Z})^\vee \otimes i^* (\det \Omega_{Z/Y}^1),
    \]
    where $\mathcal{C}$ is the conormal sheaf. If $Y$ is a smooth variety of dimension $d$ over $k$, its canonical sheaf can just be defined as being the sheaf of regular $d$-forms.

    \medskip

Since in our context we deal with non-smooth $G$-curves, it is convenient to use the following object, which plays the analogous role of the canonical sheaf and generalizes it. The $r$-th \emph{dualizing sheaf} of a proper morphism $g \colon X\longrightarrow Y$, with fibers of dimension $\leq r$, is a quasi-coherent sheaf $\omega_g$, equipped with a canonical isomorphism
    \[
    g_\ast Hom_{\mathcal{O}_X} (\mathcal{F}, \omega_g) \simeq Hom_{\mathcal{O}_Y}(R^rg_* \mathcal{F},\mathcal{O}_Y),
    \]
    for all quasi-coherent $\mathcal{O}_X$-modules $\mathcal{F}$; for more details on this topic, see \cite[\S 6.4]{Liu}.\\
    
 When it is defined, we denote as $\omega_T$ the dualizing sheaf of the structure morphism $T/ k$, which in particular is the same as the canonical sheaf when $T$ is smooth. The dualizing sheaf satisfies the following adjunction formula: for $g\colon X\rightarrow Y$ a flat projective l.c.i.\ morphism of pure relative dimension $r$, the $r$-th dualizing sheaf $\omega_g$ is isomorphic to the canonical sheaf $\omega_{X/Y}$. Moreover, for $Y \rightarrow Z$ flat, projective, of pure relative dimension $r$ and l.c.i, we have
        \begin{align}
        \label{adjunction}
        \omega_{X/Z} = \omega_{X/Y} \otimes g^*\omega_{Y/Z}.
        \end{align}

\begin{remark}\label{dualizing_existence}
    The following results on the dualizing sheaf can be found in \cite[\S 6]{Liu} (see e.g. Corollary 4.29 and Theorem 4.32) or in \cite[III. \S 7]{Hartshorne} (see e.g. Proposition 7.5, Theorem 7.6 and Corollary 7.7). For every projective variety, the dualizing sheaf exists. As a direct consequence of the fact that every $G$-normal curve $X$ is locally of complete intersection, see \Cref{lci}, the dualizing sheaf $\omega_X$ is invertible. Its degree is defined as the integer
    \[
    \deg(\omega_X) \defeq \chi(\omega_X) - \chi(\mathcal{O}_X).
    \]
    Moreover, $X$ is Cohen-Macaulay; hence, we have the Serre duality on $X$ (see \cite[III. Corollary 7.7]{Hartshorne}); i.e., for any locally free sheaf $\mathcal{F}$ on $X$:
    \[
        \mathrm{H}^1(X,\mathcal{F}) \simeq \mathrm{H}^0(X,\mathcal{F}^\vee \otimes \omega_X)^\vee.
    \]
\end{remark}

The morphism $\pi$ being flat and a local complete intersection, the relative dualizing sheaf of $\pi$ is isomorphic to the relative canonical sheaf $\omega_{X/Y}$; in particular, it is equipped with a $G$-linearization (see \cite[Section 7]{diagos}). This implies that the dualizing sheaf $\omega_X$ is equipped with natural $G$-action, which extends the $G$-action on the canonical sheaf on the smooth locus of $X$.

\begin{lemma}
\label{kappa}
    Let $S$ be the surface as in \Cref{diagram}. 
    Then the canonical sheaf is described in terms of the dualizing sheaf of $X$ as follows:
    \begin{align*}
    \omega_S = (q_*)^G {pr_X}^*\omega_X.
    \end{align*}
\end{lemma}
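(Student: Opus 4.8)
The plan is to pull everything back to $E\times X$ along the $G$-torsor $q$ and then descend. Since $G\subset E$ acts on the first factor by translation, the diagonal $G$-action on $E\times X$ is free, so $q\colon E\times X\to S$ is a $G$-torsor; moreover $S$ is smooth by \Cref{regular}, so $\omega_S$ is an honest line bundle. By faithfully flat descent along $q$, the functor $\mathcal{N}\mapsto q^*\mathcal{N}$ is an equivalence from $\mathcal{O}_S$-modules onto $G$-linearized $\mathcal{O}_{E\times X}$-modules, with quasi-inverse $\mathcal{M}\mapsto (q_*\mathcal{M})^G$. Hence it suffices to produce a $G$-equivariant isomorphism $q^*\omega_S\cong pr_X^*\omega_X$: applying $(q_*-)^G$ then yields $\omega_S\cong (q_*)^G pr_X^*\omega_X$.

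First I would compute $\omega_{E\times X}$. As $X$ is $G$-normal it is l.c.i.\ by \Cref{lci}, and writing $pr_E,pr_X$ for the two projections, the map $pr_X\colon E\times X\to X$ is smooth (a base change of the elliptic curve $E$), projective, of pure relative dimension one. The adjunction formula (\ref{adjunction}) applied to $E\times X\xrightarrow{pr_X} X\to \Spec k$ gives
\[
\omega_{E\times X}\cong \omega_{(E\times X)/X}\otimes pr_X^*\omega_X\cong pr_E^*\omega_E\otimes pr_X^*\omega_X,
\]
and since $E$ is an elliptic curve $\omega_E\cong\mathcal{O}_E$, trivialized by the invariant differential. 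As this differential is translation-invariant, the identification $\omega_{E\times X}\cong pr_X^*\omega_X$ is $G$-equivariant, where $pr_X^*\omega_X$ carries the linearization induced by the $G$-equivariance of $pr_X$ and the natural $G$-linearization on $\omega_X$.

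Next I would apply adjunction to $E\times X\xrightarrow{q} S\to\Spec k$, obtaining $\omega_{E\times X}\cong \omega_{(E\times X)/S}\otimes q^*\omega_S$. The heart of the argument — and the step I expect to be the main obstacle, especially when $G$ is infinitesimal and $q$ is purely inseparable rather than étale — is to show that the relative dualizing sheaf $\omega_{(E\times X)/S}$ is trivial. I would argue as follows: $\omega_{(E\times X)/S}$ commutes with base change along the finite flat map $q$, and the torsor trivializes over itself, $(E\times X)\times_S(E\times X)\cong G\times (E\times X)$, so the base change of $q$ becomes a projection $G\times(E\times X)\to E\times X$ whose relative dualizing sheaf is the pullback of $\omega_{G/k}$. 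Now $\omega_{G/k}\cong\mathcal{O}_G$, because $\mathcal{O}(G)$, being a finite-dimensional Hopf algebra, is a Frobenius algebra, so $\mathcal{H}om_k(\mathcal{O}(G),k)\cong\mathcal{O}(G)$ as modules. Pulling back along the unit section, which splits the relevant morphism, descends this triviality back to $\omega_{(E\times X)/S}\cong\mathcal{O}_{E\times X}$ already on $E\times X$.

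Combining the two adjunction computations gives a $G$-equivariant isomorphism $q^*\omega_S\cong\omega_{E\times X}\cong pr_X^*\omega_X$, and descent along $q$ then produces $\omega_S\cong (q_* q^*\omega_S)^G\cong (q_*)^G pr_X^*\omega_X$, as claimed. The only points requiring care beyond the triviality of the relative dualizing sheaf are checking that all the canonical isomorphisms respect the $G$-linearizations, which follows from functoriality, and that the adjunction formula (\ref{adjunction}) indeed applies in each instance, which holds since every morphism in sight is flat, projective and l.c.i.\ of the stated relative dimension.
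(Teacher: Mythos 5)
Your argument is correct and follows the same skeleton as the paper's proof: identify $q^*\omega_S$ with $pr_X^*\omega_X$ using adjunction, K\"unneth and the triviality of $\omega_E$, then descend along the $G$-torsor $q$ by applying $(q_*)^G$. Where you genuinely differ is at the key input: the paper simply quotes \cite[Lemma 7.1]{diagos} for the triviality of the relative canonical sheaf $\omega_{(E\times X)/S}$, whereas you prove it directly --- base-change the torsor along itself so that it becomes the projection $G\times(E\times X)\to E\times X$, trivialize $\omega_{G/k}$ via the Frobenius (Larson--Sweedler) property of the finite-dimensional Hopf algebra $\mathcal{O}(G)$, and pull back along the unit section, i.e.\ the diagonal, which splits the base-changed map. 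This is in substance a self-contained proof of Brion's lemma, so your version trades a citation for honest work; both are valid.

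One point deserves more care than an appeal to ``functoriality'': for the final descent step to return $\omega_S$ on the nose, the composite isomorphism $q^*\omega_S\cong pr_X^*\omega_X$ must respect the $G$-linearizations. Functoriality does cover the canonical maps (adjunction, K\"unneth), and you correctly handle $\omega_E$ via the translation-invariant differential; but your trivialization of $\omega_{(E\times X)/S}$ is a choice, not a canonical map, so its equivariance is not automatic. A non-equivariant trivialization would change the descended bundle by the (torsion, possibly nontrivial) line bundle on $S$ attached to a character of $G$, in the spirit of \Cref{ker_finite}, so the statement would then only hold up to such a twist. The standard fix, which is what makes Brion's statement equivariant, is to take as generator of $\omega_{G/k}\cong\mathcal{O}(G)^\vee$ a nonzero integral, which is translation-invariant; since $G\subset E$ is commutative, left and right integrals agree, and with this choice every isomorphism in your chain is $G$-equivariant and the descent yields exactly $\omega_S = (q_*)^G\,pr_X^*\omega_X$.
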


\begin{proof}
    By \cite[Lemma 7.1]{diagos}, we have that the relative canonical sheaf of $q$ is trivial. Hence,
    \[
    q^*\omega_S = \omega_{E \times X} = {pr_E}^*(\omega_E) \otimes {pr_X}^*(\omega_X) = {pr_X}^*(\omega_X).
    \]
    Taking the push-forward via $q_*$ and taking $G$-invariants then yields
    \[
    \omega_S = (q_*)^G q^*\omega_S = (q_*)^G {pr_X}^*(\omega_X)
    \]
    and we are done.
\end{proof}

\label{sec:kodaira}
The guiding idea of this section is to show that the Kodaira dimension of $S$ should be \emph{the same as the one of $X$}; since $X$ is non-smooth in general we need to make use of the notion of dualizing sheaf. If $H^0(X,L) \neq 0$, we denote by $\Phi_{|L|}\colon X\dashrightarrow \proj^N$ the rational map induced by $L$.

\begin{definition}
Let $L$ be a line bundle on $X$ and
\[
N(L) \defeq \{n\geq 1,\ H^0(X,L^n) \neq 0\}.
\]
The \emph{Iitaka dimension} $\kappa(X,L)$ of $L$ on $X$ is defined as $-\infty$ if $N(L) = \emptyset$, else as the quantity
\[
  \max_{n \in N(L)} \left( \dim \Phi_{\vert nL\vert}(X) \right).
\]
\end{definition}

If moreover $X$ is smooth, then the canonical sheaf $\omega_X$ on $X$ exists and the \emph{Kodaira dimension} of $X$ is the Iitaka dimension of $\omega_X$, which provides a birational invariant for smooth varieties. 
For a $G$-normal curve $X$, we denote as
\[
\kappa^\prime(X) \defeq \kappa(X,\omega_X)
\]
the Iitaka dimension of its dualizing sheaf, which is well defined thanks to \Cref{dualizing_existence}. This notion coincides with the classical one of Kodaira dimension when $X$ is smooth, but it can be different when $X$ is not a normal curve; see e.g.\ \cite[Example 2.9]{Chen_Zhang}. 

\begin{lemma}
\label{equalityKodaira}
The canonical ring 
\[
R(S,\omega_S) \defeq \bigoplus_{n=0}^\infty H^0(S,\omega_S^n)
\]
is a finitely generated $k$-algebra, isomorphic to the $G$-invariant part of $R(X,\omega_X)$. Moreover, the equality $\kappa(S) = \kappa^\prime(X)$ holds.
\end{lemma}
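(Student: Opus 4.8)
The plan is to reduce everything to the $G$-torsor $q\colon E\times X\to S$ of \Cref{diagram} and to transport the computation of sections along it. From the proof of \Cref{kappa} we have the $G$-equivariant identification $q^{*}\omega_S={pr_X}^{*}\omega_X$, hence $q^{*}\omega_S^{n}={pr_X}^{*}\omega_X^{n}$ for every $n\geq 0$. Since $q$ is a finite locally free $G$-torsor (\Cref{lem:categorical_quotient}), faithfully flat descent gives $(q_*q^{*}\omega_S^{n})^{G}=\omega_S^{n}$, and taking global sections, which commutes with the left-exact functor of $G$-invariants, yields
\[
H^{0}(S,\omega_S^{n})=H^{0}\big(E\times X,\ {pr_X}^{*}\omega_X^{n}\big)^{G}.
\]
First I would compute the right-hand side as a $G$-module. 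By the projection formula (or Künneth), using that $E$ is proper and connected so that $H^{0}(E,\mathcal{O}_E)=k$, one gets $H^{0}(E\times X,{pr_X}^{*}\omega_X^{n})=H^{0}(X,\omega_X^{n})$. The diagonal $G$-action on $E\times X$ acts on $E$ by translations, hence trivially on $H^{0}(E,\mathcal{O}_E)=k$ (exactly as observed in the proof of \Cref{Betti}); consequently the induced $G$-action on $H^{0}(E\times X,{pr_X}^{*}\omega_X^{n})$ is the one coming from the $G$-linearization of $\omega_X^{n}$. Therefore $H^{0}(S,\omega_S^{n})=H^{0}(X,\omega_X^{n})^{G}$, and since $q^{*}$ is a homomorphism of graded rings compatible with invariants, summing over $n$ produces a graded $k$-algebra isomorphism $R(S,\omega_S)\cong R(X,\omega_X)^{G}$.

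Next I would establish finite generation. As $\omega_X$ is an invertible sheaf on the integral projective curve $X$ (\Cref{dualizing_existence}), its section ring $R(X,\omega_X)$ is finitely generated: if $\deg\omega_X<0$ then $R(X,\omega_X)=k$; if $\deg\omega_X=0$ then each graded piece is at most one-dimensional, so $R(X,\omega_X)$ is $k$ or $k[t]$; and if $\deg\omega_X>0$ then $\omega_X$ is ample on the integral curve $X$ and its section ring is finitely generated. It then remains to pass to $G$-invariants. Since $R(X,\omega_X)$ is finitely generated and $G$ is finite, the invariant subring $R(X,\omega_X)^{G}$ is again finitely generated, $R(X,\omega_X)$ being a finite module over it (standard finiteness of invariants for finite group scheme actions; alternatively, apply \Cref{lem:categorical_quotient} to the affine cone $\Spec R(X,\omega_X)$ and invoke the Artin--Tate lemma). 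Hence $R(S,\omega_S)\cong R(X,\omega_X)^{G}$ is a finitely generated $k$-algebra.

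Finally, for the equality of Iitaka dimensions I would exploit that $R(X,\omega_X)$ is integral over $R(X,\omega_X)^{G}$. An integral extension of graded domains preserves Krull dimension, so
\[
\dim R(S,\omega_S)=\dim R(X,\omega_X)^{G}=\dim R(X,\omega_X).
\]
Recalling the standard fact that, for a line bundle $L$ on an integral projective variety with finitely generated section ring, the Iitaka dimension equals $\dim R(\,\cdot\,,L)-1$ when $N(L)\neq\varnothing$ and $-\infty$ otherwise, this gives $\kappa(S)=\kappa^{\prime}(X)$ once the exceptional cases are matched: $R(S,\omega_S)=k$ if and only if $R(X,\omega_X)^{G}=k$, and since $R(X,\omega_X)$ is a domain integral over $R(X,\omega_X)^{G}$ (with $k$ algebraically closed), the latter forces $R(X,\omega_X)=k$ as well.

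The step I expect to be the main obstacle is the careful bookkeeping of the $G$-module structures in the first paragraph when $G$ is infinitesimal: one must justify torsor descent and the Künneth decomposition $G$-\emph{equivariantly} for a non-reduced group scheme, and verify the triviality of the $G$-action on $H^{0}(E,\mathcal{O}_E)$ at the level of comodules rather than of $k$-points. Once this equivariant identification $H^{0}(S,\omega_S^{n})=H^{0}(X,\omega_X^{n})^{G}$ is secured, the remaining finiteness and dimension assertions are routine commutative algebra.
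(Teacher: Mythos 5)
Your proposal is correct and follows essentially the same route as the paper's proof: the identification $q^*\omega_S = pr_X^*\omega_X$ from \Cref{kappa}, descent along the $G$-torsor $q$ to obtain $H^0(S,\omega_S^n)=H^0(X,\omega_X^n)^G$ for all $n$, and then finite generation of $R(X,\omega_X)$ together with its integrality over the invariant subring to conclude both finite generation of $R(S,\omega_S)$ and the equality $\kappa(S)=\kappa'(X)$. Your extra details (the equivariant K\"unneth bookkeeping, the case analysis on $\deg\omega_X$, and the matching of the exceptional cases via Krull dimension) simply make explicit what the paper leaves implicit.
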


\begin{proof}
    As noticed in the proof of \Cref{kappa}, we have the equality $q^*\omega_S = pr_X^*\omega_X$. Taking on both sides the $n$-th tensor powers, then the direct image $q_*$ and using that $q$ is a $G$-torsor, we obtain that 
    \[
        \omega_S^{n} = q_*(pr_X^*(\omega_X)^{n})^G.
    \]
    Finally, by taking the global sections, we get:
    \begin{align}
    \label{sections}
H^0(S,\omega_S^n) = H^0(E\times X,pr_X^*(\omega_X^{n}))^G = H^0(X,\omega_X^n)^G, \quad \text{for all } n.
\end{align}
The ring of sections $R(X,\omega_X)$ of the dualizing sheaf is a finitely generated $k$-algebra, because $X$ is a projective curve; moreover, it is integral over
\[
R(S,\omega_S) = R(X,\omega_X)^G \subset R(X,\omega_X).
\]
Thus, we can conclude that $R(S,\omega_S)$ is also finitely generated, and that the two Iitaka dimensions coincide.
\end{proof}

\subsection{Proofs of \Cref{TheoremA} and \Cref{CorollaryB}}

\begin{proof}[Proof of \Cref{TheoremA}]
    By Lemmas \ref{lem:minimal} and \ref{equalityKodaira}, $S$ is a relatively minimal surface such that $\kappa(S) = \kappa'(X)$. The computations of the Betti numbers of $S$ are given in \Cref{Betti}.

    \smallskip

    Next we prove the table of classification. Assume $\kappa(S) = -\infty$. The surface $S$ is not isomorphic to the projective plane, so $S$ is equipped with a structure of $\proj^1$-bundle and has Picard rank two. Since the morphism $f\colon S\to Y$ has general fiber $E$, the structural morphism of $\proj^1$-bundle is the fibration $h\colon S\to E/G$. Hence $X=\proj^1$, this implies $Y=\proj^1$ and we deduce the invariants of $S$. Conversely, if $X = \proj^1$, then $h$ is a $\proj^1$-bundle over the elliptic curve $E/G$, which is isogenous to $E$, so $\kappa(S) = -\infty$.

    \smallskip
    
    Assume now that $\kappa(S)=0$. Then $X$ is a curve with arithmetic genus
    \[
    p_a(X) \leq \kappa'(X) + 1 = 1,
    \]
    where $\kappa'(X)$ is the Iitaka dimension of the dualizing sheaf on $X$. Now notice that the morphism $X\to Y$ factorizes through the quotient $X\to X/G^0$ and $X/G^0$ is a smooth curve with geometric genus $g(X/G^0) = g(X)$ (see \cite[Corollary 4.8 and Remark 5.2]{Brion2}). This implies that
    \[g(Y)=p_a(Y) \leq g(X) \leq p_a(X),\]
    where the second inequality follows from \cite[IV. Ex 1.8]{Hartshorne}. 
    
    Therefore, $Y=\proj^1$ or an elliptic curve, and $p_a(X)\in \{0,1\}$. If $p_a(X)=0$, then $X=\proj^1$ as $X$ is an irreducible curve (see again \cite[IV. Exercise 1.8]{Hartshorne}). Then $h$ is again a $\proj^1$-bundle, which contradicts that $\kappa(S)=0$. Hence $p_a(X) = 1$, so $X$ is either an elliptic curve, or a rational curve with a cusp or a node (again by \cite[IV. Exercise 1.8]{Hartshorne}). By \cite[Corollary 4.6]{Brion2}, the latter case is prohibited because $G$-normal curves admit only cusps as singularities. It suffices to see that the cases where $X$ is a rational curve with a cusp or an elliptic curve correspond to the surfaces of Kodaira dimension zero.
    
    Now, if $X$ is a rational curve with a cusp, then $Y=\proj^1$, so $q(S)=1$, $b_1(S) = b_2(S) = 2$, and $S$ is a quasi-hyperelliptic surface: see \cite[Proposition p.\ 26]{BombieriMumford}. If $X$ is an elliptic curve, then we distinguish two cases: either $G$ acts by translations on $X$, in which case $Y$ is an elliptic curve and $S$ is an abelian surface; or $G$ acts on $X$ not only by translations, $Y=\proj^1$ and $S$ is a hyperelliptic surface: see \cite[Theorem 4]{BombieriMumford}. This proves that if $S$ is a properly elliptic surface, then $X$ is a $G$-normal curve which was not considered above (i.e.\ $X$ is not isomorphic to $\proj^1$, a rational curve with a cusp or an elliptic curve); and any of these $G$-normal curves gives rise to a surface of Kodaira dimension one.
\end{proof}

\begin{proof}[Proof of \Cref{CorollaryB}]
    By \cite[Proposition 3.24]{fong_1}, $\Aut^0(S)$ is an abelian variety; and if $S$ is an abelian surface, then $\Aut^0(S)=S$ acts on itself by translations. To extend the classification of pairs $(S,\Aut^0(S))$ in positive characteristic, where $S$ is relatively minimal with $\kappa(S)\geq 0$, it suffices to determine the surfaces $S$ for which $\Aut^0(S)$ is an elliptic curve (see \cite[Remark 3.26]{fong_1}). Such surfaces are isomorphic to a contracted product $E\times^G X$, where $E$ is an elliptic curve, $G\subset E$ a finite subgroup scheme and $X$ a $G$-normal curve. 
    
    Assume that $\Aut^0(S)$ is an elliptic curve. By \Cref{TheoremA}, if $\kappa(S)=0$, then $S$ is a quasi-hyperelliptic surface or a hyperelliptic surface. Conversely, every quasi-hyperelliptic and hyperelliptic surface is a contracted product $S=E\times^G X$ equipped with a faithful action of $E=\Aut^0(S)$. If $\kappa(S)=1$, then $\Aut^0(S)$ is an elliptic curve if and only if $S$ is also a contracted product of that form. Finally, in all remaining cases from the classification of surfaces, we obtain that $\Aut^0(S)$ is trivial.
\end{proof}


\section{The diagonalizable case: on the multiple fibers and the Picard scheme}\label{Section:diagonalizable}

From now on, we keep the notation of \Cref{diagram}, and assume moreover the group $G$ to be a finite \emph{diagonalizable} subgroup scheme of $E$, unless explicitly stated otherwise. Under this assumption, the elliptic fibration $f$ satisfies the following: all multiple fibers are tame, and we have a Hurwitz formula for the $G$-normal curve $X$.

\subsection{Multiplicity of fibers}
Let us keep the notation of the diagram in \Cref{diagram} and denote as 
\[
f^{-1}(y) = m(y) \cdot f^{-1}(y)_{\text{\text{red}}},\]
the schematic fiber over the point $y \in Y$, seen as a divisor in $S$, with $m(y)$ its multiplicity. We assume in this part that $E$ is ordinary, so that $G$ is a finite diagonalizable group. Let us recall that we denote as $H(x)$ the stabilizer of the action at the point $x \in X$ and as $n(x)$ its order. 

We distinguish two types of multiple fibers for (quasi)-elliptic surfaces:

\begin{definition}\cite{BombieriMumford}
    A fiber above $y \in Y$ is called a \emph{wild} fiber if $\dim_k \mathcal{O}(f^{-1}(y)) \geq 2.$ Otherwise, it is called a \emph{tame} fiber.
\end{definition}

In order to study multiple fibers, it is useful to look at the quotient map 
\[
\pi \colon X \longrightarrow Y = X/G.
\]

\begin{lemma}
\label{trivialR1} 
    Let us keep the above assumptions and let $G$ be a finite diagonalizable group. Then $R^1f_*\mathcal{O}_S = \mathcal{O}_Y$.
\end{lemma}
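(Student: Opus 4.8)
The plan is to compute $R^1 f_*\mathcal{O}_S$ by transporting the question up along the finite $G$-torsor $q\colon E\times X\to S$ and then exploiting the equality of composites $f\circ q = \pi\circ pr_X$ recorded in \Cref{diagram}. First I would reduce to $E\times X$. Since $q$ is a finite (hence affine) $G$-torsor and $G$ is diagonalizable, thus linearly reductive, the sheaf $q_*\mathcal{O}_{E\times X}$ splits into its isotypic components, the trivial one being $\mathcal{O}_S=(q_*\mathcal{O}_{E\times X})^G$. Affineness of $q$ makes the Leray spectral sequence degenerate, so $R^i(f\circ q)_*\mathcal{O}_{E\times X}=R^if_*(q_*\mathcal{O}_{E\times X})$. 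As $Y$ carries the trivial $G$-action, the invariants functor $(-)^G$ is exact and is a direct-summand projection, hence commutes with $R^if_*$; taking invariants yields
\[
R^if_*\mathcal{O}_S=\bigl(R^i(f\circ q)_*\mathcal{O}_{E\times X}\bigr)^G.
\]

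Next I would evaluate the right-hand side through the other factorization $f\circ q=\pi\circ pr_X$. Since $\pi$ is finite, $R^1(f\circ q)_*\mathcal{O}_{E\times X}=\pi_*\,R^1(pr_X)_*\mathcal{O}_{E\times X}$, and flat base change along $X\to\Spec k$ gives $R^1(pr_X)_*\mathcal{O}_{E\times X}\cong \mathcal{O}_X\otimes_k H^1(E,\mathcal{O}_E)$; the singularities of $X$ are harmless here because the fibre $E$ is smooth and proper. All these identifications are $G$-equivariant ($pr_X$ is $G$-equivariant and $\pi$ is $G$-invariant), and the only nontrivial point is the $G$-action on the factor $H^1(E,\mathcal{O}_E)$: as $G\subset E$ acts on $E$ by translations, and translations act trivially on $H^1(E,\mathcal{O}_E)$ — from the primitivity formula $m^*\omega=\omega\otimes 1+1\otimes\omega$ for $\omega\in H^1(E,\mathcal{O}_E)$, which yields $t_g^*\omega=\omega$ for every (functorial, in particular infinitesimal) point $g$ — the module $H^1(E,\mathcal{O}_E)$ is the trivial one-dimensional $G$-module. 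Taking invariants and using $(\pi_*\mathcal{O}_X)^G=\mathcal{O}_Y$ (the defining property of the quotient $Y=X/G$) together with $\dim_k H^1(E,\mathcal{O}_E)=1$ gives
\[
R^1f_*\mathcal{O}_S=(\pi_*\mathcal{O}_X)^G\otimes_k H^1(E,\mathcal{O}_E)=\mathcal{O}_Y,
\]
as desired.

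I expect the main obstacle to be the careful bookkeeping of $G$-equivariance: one must check that each identification above — the isotypic splitting of $q_*\mathcal{O}_{E\times X}$, the two Leray degenerations, the base-change isomorphism, and the interchange of $(-)^G$ with $R^1f_*$ — respects the $G$-action, which is precisely where linear reductivity of the diagonalizable group $G$ is used. The only genuinely geometric input is the triviality of the $G$-action on $H^1(E,\mathcal{O}_E)$, i.e.\ that translations act trivially on the coherent cohomology of the elliptic curve $E$; everything else is formal.
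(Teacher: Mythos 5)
Your proof is correct and takes essentially the same route as the paper: both reduce to computing $(R^1(f\circ q)_*\mathcal{O}_{E\times X})^G$ using exactness of $q_*$ and linear reductivity of the diagonalizable group $G$, and then evaluate this via the factorization $f\circ q=\pi\circ pr_X$ --- the paper's local K\"unneth computation over affine opens $U\subset Y$ (namely $H^1(E\times V,\mathcal{O}_{E\times V})=H^1(E,\mathcal{O}_E)\otimes\mathcal{O}(V)$ with $V=\pi^{-1}(U)$) is exactly what your flat base change along $pr_X$ plus exactness of $\pi_*$ amounts to. The one point where you are more careful than the paper is in justifying, via primitivity of $H^1(E,\mathcal{O}_E)$, that translation by a possibly non-reduced subgroup scheme $G\subset E$ acts trivially on $H^1(E,\mathcal{O}_E)$; the paper's step $(H^1(E,\mathcal{O}_E)\otimes\mathcal{O}(V))^G=\mathcal{O}(U)$ uses this silently, and your functorial-points argument is the right way to make it rigorous.
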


\begin{proof}
    By definition $\mathcal{O}_S = (q_*)^G\mathcal{O}_{E\times X})$. Since $G$ is diagonalizable and $q_*$ exact, 
    \[
    R^1f_*(\mathcal{O}_S) = R^1f_*(q_*)^G\mathcal{O}_{E\times X}=(R^1f_*(q_*(\mathcal{O}_{E\times X})))^G = (R^1(fq)_*\mathcal{O}_{E\times X})^G.
    \]
    Let $\phi= fq$. Taking $U\subset Y$ affine and $V\defeq \pi^{-1}(U)$, we get that
    \begin{align*}
        (R^1\phi_*\mathcal{O}_{E\times X}(U))^G & = H^1(\phi^{-1}(U),\mathcal{O}_{E\times X})^G  = H^1(E\times V,\mathcal{O}_{E\times V})^G \\
        & = (H^1(E,\mathcal{O}_E)\otimes \mathcal{O}(V))^G = \mathcal{O}(U).
    \end{align*}
    Thus $R^1f_*(\mathcal{O}_S) = \mathcal{O}_Y$ and we find that there are no wild fibers.
    \end{proof}

\begin{proposition}
\label{nowild}
    With the same assumption as in \Cref{trivialR1}, the following holds: for every $x \in X$ and $y\defeq \pi(x)$, the multiplicity $m(y)$ is equal to $n(x)$. 
\end{proposition}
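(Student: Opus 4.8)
The plan is to pull back the schematic fiber $f^{-1}(y)$ along the $G$-torsor $q\colon E\times X\to S$ and to compare it with the divisor $\pi^{*}(y)$ on $X$. Since the diagram in \Cref{diagram} commutes, we have $f\circ q=\pi\circ pr_X$, so as Cartier divisors $q^{*}f^{*}(y)=pr_X^{*}\pi^{*}(y)$. Writing $f^{*}(y)=m(y)\,D$ with $D\defeq f^{-1}(y)_{\text{red}}$, I first identify $D$: because $G$ acts freely on $E$, the stabilizer of any point of $E\times X$ is trivial and $q$ is a genuine torsor; moreover two points $(e,x),(e',x)$ have the same image under $q$ exactly when $e'=eg^{-1}$ with $g\in \Stab_G(x)=H(x)$, so $q(E\times\{x\})=E/H(x)$ and $D\cong E/H(x)$ is an elliptic curve isogenous to $E$. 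Comparing the coefficients (lengths of the local rings of the divisors) of the two sides of $q^{*}f^{*}(y)=pr_X^{*}\pi^{*}(y)$ along the prime divisor $E\times\{x\}$ then yields $m(y)\cdot e_q=e_\pi$, where $e_\pi$ is the ramification order of $\pi$ at $x$ and $e_q$ is the order of $q^{*}D$ along $E\times\{x\}$. Thus it suffices to prove $e_\pi=n(x)\cdot e_q$.

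Both orders are computed with the local description of \Cref{sec:local} (that is, \cite[Theorem 1]{diagos}): on the $G$-stable open neighbourhood $U=U(x)$ the map $\pi$ factors as $U\xrightarrow{\varphi}U/H\xrightarrow{\psi}U/G$, where $\varphi$ is a cyclic cover of degree $n(x)=\vert H(x)\vert$ totally ramified at $x$ and $\psi$ is a $(G/H)$-torsor. Hence $e_\pi=n(x)\cdot e_\psi$, where $e_\psi$ is the ramification of $\psi$ over $y$, equal to the inseparable degree $\vert (G/H)^0\vert$: indeed the infinitesimal part of $G/H$ acts trivially on the reduced base, so $\psi$ is purely inseparable of that degree, while the étale part of $G/H$ contributes only separable unramified sheets. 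On the other hand, factoring the torsor $q$ through $E\times X\xrightarrow{q_1} E\times^{H}X\xrightarrow{q_2} S$, the first map restricts on $E\times\{x\}$ to the isogeny $E\to E/H$; since $E$ is reduced and already realizes the full torsor degree $\vert H\vert$ over $E/H$, the preimage of $D$ under $q_1$ is reduced and $e_{q_1}=1$. The second map $q_2$ is the same $(G/H)$-torsor, collapsing the infinitesimal structure along $D\cong E/H$, so $e_q=e_{q_2}=\vert (G/H)^0\vert=e_\psi$. The factor $e_\psi$ cancels and $m(y)=e_\pi/e_q=n(x)$.

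The main obstacle is the careful bookkeeping of the purely inseparable contributions, and in particular verifying that the infinitesimal part of $G/H$ produces exactly the same ramification in $\psi$ and in $q$, so that it cancels; the structural reason this works is that $G$ acts freely on $E$, which makes $q$ a torsor whose only ``ramification'' is inseparable and matches that of $\psi$. A secondary technical point is that $X$ need not be smooth at $x$ (it may carry a cusp, as for quasi-hyperelliptic surfaces), so that $E\times X$ fails to be regular along $E\times\{x\}$; the comparison of orders must therefore be carried out length-theoretically on the Cohen--Macaulay curve rather than via valuations on a discrete valuation ring, using that $E\times X$ and $X$ are Cohen--Macaulay (\Cref{lci}) and that $q$ and $pr_X$ are flat. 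Finally, over $Y_{\mathrm{fr}}$ one has $H(x)=1$, $n(x)=1$ and $f^{-1}(y)=E$ reduced, consistently with $m(y)=1$.
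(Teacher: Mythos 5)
Your strategy is genuinely different from the paper's: the paper never pulls back divisors, but instead computes the schematic fiber directly, writing $X_y = G\times^{H}X_z$ and hence $f^{-1}(y) = E\times^{H}X_z$ with $X_z\simeq \Spec\left(k[T]/(T^{n(x)})\right)$, from which both the reduced structure $E/H$ and the multiplicity $n(x)$ are read off. Within your bookkeeping, the identity $m(y)\,e_q=e_\pi$, the computation $e_\pi=n(x)\cdot\vert (G/H)^0\vert$ via \cite[Theorem 1]{diagos}, and the step $e_{q_1}=1$ are all sound; the last one works precisely because $E\times\{x\}$ is a \emph{closed $H$-stable} subscheme, so it descends along the $H$-torsor $q_1$ and $q_1^{-1}(D')=E\times\{x\}$, where $D'$ denotes the image of $E\times\{x\}$ in $E\times^{H}X$.

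However, there is a genuine gap at the two intertwined claims $D\cong E/H(x)$ and $e_{q_2}=\vert(G/H)^0\vert$. Your identification of $D$ argues with $k$-points only, and $k$-points see nothing of the infinitesimal part of $G$: the argument cannot exclude that the scheme-theoretic image $D$ of $E\times\{x\}$ is such that $E/H\to D$ is finite, bijective and purely inseparable of degree $p^a>1$. If that happened, the flatness degree count for the $(G/H)$-torsor $q_2$ over the generic point of $D$, namely $\vert\pi_0(G/H)\vert\cdot e_{q_2}\cdot[k(D'):k(D)]=\vert G/H\vert$, would give $e_{q_2}=\vert(G/H)^0\vert/p^a$ and your formula would output $m(y)=n(x)p^a$. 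This is not a fictitious worry: pullback of a reduced divisor along an infinitesimal torsor can perfectly well stay reduced (pull back $\{w=s\}$ along the $\boldsymbol{\mu}_p$-torsor $\Gm\times\AA^1\to\Gm\times\AA^1$, $(t,s)\mapsto(t^p,s)$), so ``collapsing the infinitesimal structure along $D$'' is exactly the assertion to be proven, and your justification of it presupposes $D\cong E/H$, i.e.\ it is circular. Note also that the descent argument that saved you for $q_1$ is unavailable here, since $D'$ is \emph{not} $(G/H)$-stable. The gap is fixable inside your framework: the natural $G/H$-equivariant map $E\times^{H}X\to S\times_Y(X/H)$, given by $q_2$ and by the map induced by $pr_X$, is a morphism of $G/H$-torsors over $S$, hence an isomorphism, so $q_2$ is the pullback of $\psi$ along $f$; since $D\subset f^{-1}(y)$ maps to the single point $y$, one gets $q_2^{-1}(D)\cong D\times_k\psi^{-1}(y)\cong D\times (G/H)$, which yields simultaneously $D'\xrightarrow{\sim} D$ and $e_{q_2}=\vert(G/H)^0\vert$. (This is where diagonalizability must enter on the $e_q$ side — via the fact, from \cite[Theorem 1]{diagos}, that $\psi$ is a torsor over a neighbourhood of $y$; the paper instead channels the same hypothesis through linear reductivity, in the spirit of \Cref{lin_red}.)
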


\begin{proof}
    Recall that $H=H(x)$ is the stabilizer of $G$ at the point $x$; let $Z\defeq X/H$. Then we get the following commutative diagram:
    \[
    \begin{tikzcd}
        X \arrow[rr,"\pi"]\arrow[rd,"\pi_H",swap] && Y\\
        & Z\arrow[ru,"\pi_{G/H}",swap] &.
    \end{tikzcd}
    \]
    First, the morphism $\pi_{G/H}$ is a $G/H$-torsor over an open neighborhood of $y$; this holds by the local description of \Cref{sec:local}. Next, let us consider the restriction of $\pi_H$ to the fibers over the point $y$, which gives a $G$-equivariant morphism
   \[
   X_y := \pi^{-1}(y) \longrightarrow \pi_{G/H}^{-1} (y) \simeq G/H.
   \]
   Thus we can write the fiber of $\pi$ over $y$ as the following contracted product:
   \begin{align*}
      X_y = G\times^H X_z,
   \end{align*}  
   where $X_z$ is the fiber of $\pi_H$ over the point $z = \pi_H(x)$. Taking the preimages of $q$ on both sides, then taking the quotient by $G$ gives 
    \begin{align}
    \label{fiber_contracted}
    f^{-1}(y) = (E\times X_y)/G = E\times^H X_z,
    \end{align}
    with reduced subscheme being $E/H$. Thus, the regular functions on the fiber satisfy
    \[
        \mathcal{O}(f^{-1}(y)) = \mathcal{O}(E\times X_z)^H = \mathcal{O}(X_z/H) = k,
    \]
    where we use again that $H$ is linearly reductive; this implies that every multiple fiber is tame. The fact that the multiplicity is equal to $n(x)$ follows again from \Cref{sec:local}.
\end{proof}

\subsection{Dualizing sheaf formula}

\begin{theorem}\label{thm:dualformula} \textnormal{(Dualizing sheaf formula)}\\
    Let $G$ be diagonalizable. Then the quotient morphism $\pi \colon X \rightarrow Y$ is flat, locally of complete intersection and we have an isomorphism of $G$-linearized sheaves
    \begin{align}
    \label{formula:dualizing}
    \omega_X \simeq (\pi^*\omega_Y) \otimes \mathcal{O}_X \left( \sum ( n(x) -1) \, G \cdot x\right)
    \end{align}
    where the sum is taken over the $G$-orbits of rational points of $X$.
\end{theorem}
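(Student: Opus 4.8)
The plan is to reduce the global formula to a purely local computation of the relative dualizing sheaf by means of the adjunction formula, and then to carry out that computation on the explicit cyclic model furnished by the local description of \Cref{sec:local}. First I would record that $\pi$ is finite, flat and l.c.i.: on each $G$-stable open subset $U=U(x)$ the quotient factors as $U\xrightarrow{\varphi}U/H\xrightarrow{\psi}U/G$, where $\varphi$ is a cyclic cover of degree $\vert H\vert$ (locally the hypersurface $t^{n(x)}=s$, hence l.c.i.) and $\psi$ is a $G/H$-torsor (flat and l.c.i.); composing these and gluing over the finitely many non-free orbits gives the claim. Since both $\pi$ and the structure morphism $Y\to\Spec k$ are flat, projective, l.c.i.\ of pure relative dimension, the adjunction formula \eqref{adjunction} yields an isomorphism of invertible sheaves $\omega_X\simeq\omega_{X/Y}\otimes\pi^*\omega_Y$. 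Thus it suffices to identify the relative dualizing sheaf $\omega_{X/Y}$ with $\mathcal{O}_X\!\left(\sum(n(x)-1)\,G\cdot x\right)$, both of which are invertible: $\omega_X$ by \Cref{dualizing_existence}, and the orbit divisors $G\cdot x$ by \Cref{lci}.

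Next I would localize the relative dualizing sheaf. Over the free locus $X_{\text{fr}}$ the morphism $\pi$ restricts to a $G$-torsor, and the relative dualizing sheaf of a torsor under a finite group scheme is canonically trivial (being the pullback of $\omega_{G/k}\simeq\mathcal{O}_G$); hence $\omega_{X/Y}$ is trivial there. Consequently $\omega_{X/Y}$ differs from $\mathcal{O}_X$ only along the finitely many non-free orbits, and it remains to compute, for each non-free rational point $x$, the local multiplicity $d_x$ of $\omega_{X/Y}$ at $x$, and to check that $d_x=n(x)-1$. Because $\psi$ is a torsor, we have $\omega_{X/Y}\vert_U=\omega_\varphi$, so the computation takes place on the cyclic cover $\varphi$, whose completion at $x$ is $\Spec k[[t]]\to\Spec k[[s]]$ with $s=t^{n(x)}$.

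To evaluate $d_x$ I would pass to the global toy model $\phi\colon\proj^1\to\proj^1$, $t\mapsto t^{n}$ with $n=n(x)$, whose completion at $0$ is exactly the above cyclic cover. Here $\phi$ restricts to a $\boldsymbol{\mu}_n$-torsor over $\Gm=\proj^1\setminus\{0,\infty\}$, so $\omega_\phi$ is supported only at $0$ and $\infty$; adjunction gives $\deg\omega_\phi=\deg\omega_{\proj^1}-n\deg\omega_{\proj^1}=2n-2$, and the involution $\iota\colon t\mapsto t^{-1}$ commutes with $\phi$ and exchanges $0$ and $\infty$, forcing $d_0=d_\infty=n-1$. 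Transporting back yields $d_x=n(x)-1$ at every geometric point of the orbit, and the $G$-equivariance of the construction spreads this multiplicity uniformly over $G\cdot x$; the resulting isomorphism is one of $G$-linearized sheaves via the linearization of $\omega_{X/Y}$ recorded in \cite[Section 7]{diagos}. The main obstacle is precisely the purely inseparable case $p\mid n(x)$: there the differential $\pi^*\Omega^1_Y\to\Omega^1_X$ vanishes and the classical Kähler-different description of the ramification breaks down, so one cannot read off $d_x$ from $\Omega^1_{X/Y}$. The symmetry argument on $\phi$ circumvents this, as it uses only the degree of the relative dualizing sheaf together with torsor-triviality away from the fixed points, both of which are insensitive to separability.
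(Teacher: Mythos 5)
First, a structural remark: the paper does not prove this theorem at all --- immediately below the statement it is attributed to \cite[Corollary 7.1]{diagos} --- so your proposal is not being compared with an internal argument but with Brion's original proof. Measured on its own terms, your skeleton (adjunction, triviality of $\omega_{X/Y}$ over $X_{\text{fr}}$ via the torsor structure, local computation at the non-free orbits) is the right one, but the argument collapses at its crux: the assertion that the completion of $\varphi$ at a non-free point $x$ is $k[[s]]\hookrightarrow k[[t]]$ with $s=t^{n(x)}$. This presupposes that $X$ is smooth at $x$ and that the branch divisor of the cyclic cover is reduced at $\varphi(x)$; neither follows from the factorization recalled in \Cref{sec:local} (Brion's cyclic covers allow non-reduced branch divisors), and the smoothness claim is genuinely \emph{false} for $G$-normal curves. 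Concretely, let $p$ be odd, $E$ ordinary, $G=\Z/2\Z\times\boldsymbol{\mu}_p\subset E[2]\times E[p]$, and let $X=\{w^p=\tilde g(t^2)\}\to \proj^1_t$, where $\tilde g$ has degree $p$ and simple nonzero roots, with $\Z/2\Z$ acting by $t\mapsto -t$ and $\boldsymbol{\mu}_p$ by $w\mapsto \zeta w$. The point $x_0$ above $t=0$ has stabilizer $\boldsymbol{\mu}_2$, so $n(x_0)=2$, and it is a \emph{cusp}: writing $w=c+w'$ with $c^p=\tilde g(0)$, the completed local ring is $k[[t,w']]/(w'^p-t^2\cdot\mathrm{unit})$. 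Yet $X$ is $G$-normal: near $x_0$ the $\boldsymbol{\mu}_p$-action is free, so by descent every $G$-stable ideal there is pulled back from $X/\boldsymbol{\mu}_p=\proj^1_t$, hence is a power of $(t)$ and in particular principal, and \Cref{lci} applies (the ramification points over the roots of $\tilde g$ and the singular free points are handled as in the paper's examples). Note that the orbit $G\cdot x_0$ is the non-reduced, length-$p$ subscheme $V(t)\cong\boldsymbol{\mu}_p$.

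At such a point your argument cannot be run: there is no isomorphism of completions along which the toy-model multiplicity could be transported, and the divisor occurring in \eqref{formula:dualizing} is $(n(x_0)-1)\,G\cdot x_0=V(t)$, of length $p$ at $x_0$, whereas "transporting back" would produce $(n(x_0)-1)$ times the reduced point --- off by a factor of $p$. The missing idea is exactly where $G$-normality must enter beyond the factorization: using \Cref{lci} (invertibility of the orbit ideals) one shows that the branch divisor of $\varphi$ is the $\psi$-pullback of a \emph{reduced} divisor on $Y$, so the correct local model is $\mathcal{O}_{U/H}[t]/(t^n-f)$ with $V(t)$ equal to the possibly non-reduced orbit; the multiplicity is then best extracted algebraically rather than by symmetry: $\Hom_{\mathcal{O}_{U/H}}(\varphi_*\mathcal{O}_U,\mathcal{O}_{U/H})$ is freely generated over $\mathcal{O}_U$ by the functional $\lambda$ dual to $t^{n-1}$, while the $G$-invariant trivialization coming from the torsor structure over the free locus is $t^{n-1}\lambda$, whence the divisor $(n-1)V(t)=(n-1)\,G\cdot x$ with the orbit, not the point, as the basic divisor. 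A final symptom that a step is missing: after invoking the factorization you never use $G$-normality again, yet for the non-$G$-normal cover $t^p=s^2$ (with $G=H=\boldsymbol{\mu}_p$ and trivial torsor part) the right-hand side of \eqref{formula:dualizing} is not even a line bundle and the formula fails; no proof that leaves this hypothesis idle can be complete.
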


The above fundamental result is \cite[Corollary 7.1]{diagos}; a way to reformulate it is as follows:
\begin{align}
\label{ramification_canonical}
\omega_X \otimes \mathcal{O}_X(G \cdot \Delta_X) = \pi^* (\omega_Y \otimes \mathcal{O}_Y(\Delta_Y))
\end{align}
where $\Delta_X = X \setminus X_{\text{fr}}$ is the divisor of the non-free points and $\Delta_Y = Y \setminus Y_{\text{fr}}$ is the branch divisor. 


\begin{corollary}
\label{Hurwitz}
    The degree of the dualizing sheaf of $X$ can be calculated as follows
    \[
    \deg \omega_X = \deg \omega_Y \cdot \vert G \vert + \sum_{y \in Y} (\vert G \vert - [G \colon H(x)]),
    \]
    where $[G \colon H(x)]$ only depends on the image $y = \pi(x)$.
\end{corollary}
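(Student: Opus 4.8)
The plan is to obtain the formula by applying the degree homomorphism $\deg \colon \Pic(X) \to \Z$ to the dualizing sheaf formula (\ref{formula:dualizing}) of \Cref{thm:dualformula}. Recall from \Cref{dualizing_existence} that on the projective curve $X$ one sets $\deg \mathcal{L} = \chi(\mathcal{L}) - \chi(\mathcal{O}_X)$, and that this is additive in $\mathcal{L}$. Thus, taking degrees in
\[
\omega_X \simeq (\pi^*\omega_Y) \otimes \mathcal{O}_X\left(\sum (n(x)-1)\, G\cdot x\right)
\]
splits the computation into two independent pieces: the degree of the pullback $\pi^*\omega_Y$, and the degree of the orbit divisor part.

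For the pullback term I would use that, by \Cref{thm:dualformula}, the morphism $\pi$ is finite, flat and surjective of degree $\vert G\vert$, while the target $Y$ is smooth, so that $\pi_*\mathcal{O}_X$ is locally free of rank $\vert G\vert$. The projection formula gives $\pi_*\pi^*\omega_Y = \omega_Y \otimes \pi_*\mathcal{O}_X$, whence $\chi(\pi^*\omega_Y) = \chi(\omega_Y \otimes \pi_*\mathcal{O}_X)$; comparing this with $\chi(\mathcal{O}_X) = \chi(\pi_*\mathcal{O}_X)$ and invoking the additivity of the Euler characteristic against a line-bundle twist of a rank-$\vert G\vert$ bundle on a smooth curve yields $\deg \pi^*\omega_Y = \vert G\vert \cdot \deg\omega_Y$. (Equivalently one simply cites the standard multiplicativity of the degree under a finite flat pullback of curves.) The one point that genuinely requires care is precisely this step, since $X$ may be singular; but working with $\chi$ rather than with a naive divisor-theoretic degree sidesteps the difficulty.

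For the orbit part, each $G\cdot x$ is an effective Cartier divisor on $X$ by \Cref{lci}, and as a scheme it is isomorphic to $G/\Stab_G(x) = G/H(x)$, so its degree equals its length as an Artinian $k$-algebra, namely $[G \colon H(x)] = \vert G\vert / n(x)$. Hence
\[
\deg \mathcal{O}_X\left(\sum(n(x)-1)\,G\cdot x\right) = \sum (n(x)-1)\,\frac{\vert G\vert}{n(x)} = \sum \left(\vert G\vert - [G\colon H(x)]\right),
\]
the sum running over the $G$-orbits of rational points. Since $G$ is diagonalizable and hence commutative, the stabilizer is constant along an orbit, $\Stab_G(g\cdot x) = \Stab_G(x)$, so both $H(x)$ and $[G\colon H(x)]$ depend only on $y = \pi(x)$; moreover a free point contributes nothing, as there $H(x)=1$ and $[G\colon H(x)]=\vert G\vert$. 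Because the set-theoretic fibers of $\pi$ are exactly the orbits (\Cref{lem:categorical_quotient}), I can re-index this finite sum over the points $y\in Y$. Adding the two pieces then gives
\[
\deg\omega_X = \vert G\vert\cdot\deg\omega_Y + \sum_{y\in Y}\bigl(\vert G\vert - [G\colon H(x)]\bigr),
\]
which is the claimed formula; everything beyond the pullback-degree step is bookkeeping resting on \Cref{thm:dualformula} and \Cref{lci}.
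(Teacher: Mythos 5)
Your proof is correct and follows exactly the route the paper intends (the paper leaves the corollary's proof implicit, as it amounts to applying the degree homomorphism to the isomorphism of \Cref{thm:dualformula}): the pullback term contributes $\vert G\vert \cdot \deg\omega_Y$ by finite flat multiplicativity of the degree, and each orbit divisor $G\cdot x \simeq G/H(x)$ contributes $(n(x)-1)[G\colon H(x)] = \vert G\vert - [G\colon H(x)]$, with free orbits contributing zero so the sum re-indexes over $y \in Y$. Your care with the Euler-characteristic definition of degree on the possibly singular curve $X$ is exactly the right way to justify the pullback step.
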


\begin{corollary}
\label{cor:degreeomega}
    We have that $\kappa (S) = 1$ if the degree of $\omega_X$ is positive, that $\kappa(S) = 0$ if $\omega_X$ is trivial, and that $\kappa(S) <0$ if the degree of $\omega_X$ is negative.
\end{corollary}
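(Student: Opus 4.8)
The plan is to reduce the entire statement to the Iitaka dimension of the invertible dualizing sheaf $\omega_X$ on the curve $X$. By \Cref{equalityKodaira} we have $\kappa(S) = \kappa'(X) = \kappa(X,\omega_X)$, and by \Cref{dualizing_existence} the sheaf $\omega_X$ is invertible, $X$ is a projective Cohen--Macaulay curve, and $\deg \omega_X = \chi(\omega_X) - \chi(\mathcal{O}_X)$. Since $X$ is an integral projective curve, the image of any non-constant rational map from $X$ is a curve, so $\kappa(X,\omega_X) \in \{-\infty, 0, 1\}$; the three asserted cases will then follow by separating according to the sign of $\deg \omega_X^{n} = n\deg \omega_X$ for $n\geq 1$, where I use that the degree is additive on $\Pic(X)$.

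First I would treat the negative case. If $\deg \omega_X < 0$, then $\deg \omega_X^{n} < 0$ for every $n\geq 1$. On an integral projective curve a line bundle of negative degree has no nonzero global section: a nonzero section would give an injection $\mathcal{O}_X \hookrightarrow \omega_X^{n}$ whose cokernel is torsion of length $\deg \omega_X^{n} \geq 0$, a contradiction. Hence $H^0(X,\omega_X^{n}) = 0$ for all $n\geq 1$, so $N(\omega_X) = \emptyset$ and $\kappa(X,\omega_X) = -\infty < 0$. For the trivial case, if $\omega_X \simeq \mathcal{O}_X$ then $H^0(X,\omega_X^{n}) = H^0(X,\mathcal{O}_X) = k$ for all $n$, the maps $\Phi_{\vert n\omega_X\vert}$ are constant, and $\kappa(X,\omega_X) = 0$.

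The positive case is where the only real content lies. If $\deg \omega_X > 0$, the Riemann--Roch theorem for the projective Cohen--Macaulay curve $X$ gives $\chi(\omega_X^{n}) = n\deg\omega_X + \chi(\mathcal{O}_X)$, so that $h^0(X,\omega_X^{n}) \geq \chi(\omega_X^{n}) \to +\infty$. Thus $h^0(X,\omega_X^{n}) \geq 2$ for $n$ large; taking two linearly independent sections $s_0, s_1$, the quotient $s_1/s_0$ is a non-constant rational function on $X$, so $\Phi_{\vert n\omega_X\vert}$ is non-constant and its image is a curve. Therefore $\dim \Phi_{\vert n\omega_X\vert}(X) = 1$, giving $\kappa(X,\omega_X) = 1$ and hence $\kappa(S) = 1$.

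I expect the main obstacle to be purely a matter of bookkeeping in the singular setting, namely ensuring that both the Riemann--Roch formula $\chi(L) = \deg L + \chi(\mathcal{O}_X)$ and the additivity $\deg \omega_X^{n} = n\deg\omega_X$ are available for $X$. Both hold for a projective Cohen--Macaulay curve with invertible dualizing sheaf, a setting guaranteed by \Cref{dualizing_existence}, so no new input beyond the standard theory of degrees on singular curves is needed.
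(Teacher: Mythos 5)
Your proof is correct and follows exactly the route the paper intends: the corollary is left without an explicit proof precisely because it reduces, via the equality $\kappa(S)=\kappa'(X)=\kappa(X,\omega_X)$ from \Cref{equalityKodaira}, to the standard trichotomy for the Iitaka dimension of an invertible sheaf on an integral projective curve according to the sign of its degree. Your bookkeeping in the singular setting (degree additivity and Riemann--Roch compatible with the paper's definition $\deg\omega_X = \chi(\omega_X)-\chi(\mathcal{O}_X)$ from \Cref{dualizing_existence}) simply fills in the details the paper treats as standard.
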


Let us apply the above Corollary to deduce information on the Kodaira dimension $\kappa(S)$ from the genus of the smooth curve $Y$ and from the number of multiple fibers of $f$. 
Let us notice that the Kodaira dimension of $S$ \emph{cannot be $2$}, since the self-intersection of $\omega_S$ is never strictly positive in our context.

\medskip

\textbf{Higher genera}: if $Y$ is of genus at least equal to two, then $\omega_Y$ is ample, hence the Kodaira dimension of $S$ is equal to one. 

\begin{remark}
\label{rem:michel_last}
    Let us notice that such actions actually exist: let us fix some $g\geq 1$ and some $r \geq 1$ and consider the action of the trivial group on a smooth projective curve of genus $g$. By applying \cite[Remark 9.4]{diagos} to this case, we get the following: for any genus $g$ and for any $r$, there exists a $\boldsymbol{\mu}_{p^r}$-normal projective curve $X_{g,r}$ with geometric genus $g$.
\end{remark}

\textbf{Genus one}: if $Y$ is an elliptic curve, then $\omega_Y$ is trivial, thus there are two different cases to deal with. In the first case, there is at least one multiple fiber, which implies that the degree of $\omega_X$ is strictly positive, and thus by (\ref{kappa}) we once again get $\kappa(S) = 1$. In the second case, we have no multiple fibers which means that $\pi$ is a $G$-torsor. In this case, we now show that $X$ is also an elliptic curve; for example, when $G= \boldsymbol{\mu}_{p^r}$, then $X$ is isomorphic to the quotient of $Y$ by the constant group $\Z/p^r\Z$ which is the Cartier dual of $G$.

\begin{lemma}\label{lem:torsoroverelliptic}
    A nontrivial reduced $\boldsymbol{\mu}_{p^r}$-torsor over an elliptic curve is itself an elliptic curve.
\end{lemma}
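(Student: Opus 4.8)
The plan is to classify the torsor by Kummer theory, realize it through a dual isogeny of elliptic curves, and then use reducedness to force the classifying line bundle to have maximal order. First I would invoke the fppf Kummer sequence $1 \to \boldsymbol{\mu}_{p^r} \to \Gm \xrightarrow{(\,\cdot\,)^{p^r}} \Gm \to 1$. Taking the long exact sequence of fppf cohomology over $Y$ and using that $H^0(Y,\Gm) = k^*$ is $p^r$-divisible (as $k$ is algebraically closed) while $H^1_{\mathrm{fppf}}(Y,\Gm) = \Pic(Y)$, one obtains $H^1_{\mathrm{fppf}}(Y,\boldsymbol{\mu}_{p^r}) \cong \Pic(Y)[p^r]$. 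Since the degree homomorphism kills torsion, the torsor $\pi\colon X \to Y$ corresponds to a line bundle $\mathcal{L} \in \Pic^0(Y)[p^r]$, nontrivial because the torsor is. As $\boldsymbol{\mu}_{p^r}$ is infinitesimal, $\pi$ is finite, flat and purely inseparable, hence a universal homeomorphism; since $Y$ is integral and $X$ is reduced, $X$ is integral. The existence of a nonzero $p^r$-torsion point of $\Pic^0(Y)$ forces $Y$ (equivalently its dual) to be \emph{ordinary}; let $p^d$, with $1 \le d \le r$, be the exact order of $\mathcal{L}$.

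Next I would produce an elliptic curve over $Y$ by the same duality used in \Cref{lem:dec_are_contracted}. The point $\mathcal{L} \in \Pic^0(Y)$ generates a constant étale subgroup $\langle \mathcal{L}\rangle \cong \Z/p^d\Z$, and the quotient isogeny $\Pic^0(Y) \to \Pic^0(Y)/\langle\mathcal{L}\rangle$ dualizes to an isogeny of elliptic curves $\iota\colon F \to Y$ with kernel $\boldsymbol{\mu}_{p^d}$, exhibiting $F \to Y$ as a $\boldsymbol{\mu}_{p^d}$-torsor of class $\mathcal{L}$. Functoriality of the Kummer sequence along $\boldsymbol{\mu}_{p^d}\hookrightarrow\boldsymbol{\mu}_{p^r}$ shows that the class of $X$ is the image of the class of $F$, so that $X \cong F\times^{\boldsymbol{\mu}_{p^d}}\boldsymbol{\mu}_{p^r}$.

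Finally I would use reducedness to show $d=r$. Writing the Kummer description $\pi_*\mathcal{O}_X \cong \bigoplus_{i=0}^{p^r-1}\mathcal{L}^{\otimes i}$ as a $\Z/p^r\Z$-graded $\mathcal{O}_Y$-algebra, suppose $d<r$. A trivialization $\mathcal{L}^{\otimes p^d}\cong\mathcal{O}_Y$ produces a nowhere-vanishing global section $u$ sitting in degree $p^d$; then $u^{p^{r-d}}$ lies in degree $0$ and equals a nonzero constant $c\in k^*$. Subtracting the unique $\lambda\in k$ with $\lambda^{p^{r-d}}=c$ (available since $k$ is algebraically closed of characteristic $p$) yields $v \defeq u-\lambda$, a nonzero global section with $v^{p^{r-d}} = u^{p^{r-d}}-\lambda^{p^{r-d}} = 0$ by the Frobenius identity in characteristic $p$. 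This nonzero nilpotent contradicts the integrality of $X$, so $d=r$ and the identification above degenerates to $X\cong F\times^{\boldsymbol{\mu}_{p^r}}\boldsymbol{\mu}_{p^r}=F$, an elliptic curve. The main obstacle is exactly this last step: because $\boldsymbol{\mu}_{p^r}$ is itself non-reduced, a $\boldsymbol{\mu}_{p^r}$-torsor is almost never reduced, and the real content of the lemma is that reducedness is precisely the condition that the classifying class have maximal order $p^r$.
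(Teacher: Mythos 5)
Your proof is correct, but it takes a genuinely different route from the paper's. The paper argues geometrically: since $\boldsymbol{\mu}_{p^r}$ is killed by the $r$-th power of Frobenius, the relative Frobenius $F^r\colon X \to X^{(p^r)}$ factors through the quotient $\pi$, producing a birational morphism $\varphi\colon Y \to X^{(p^r)}$; as $Y$ is smooth, $\varphi$ is the normalization, so the geometric genus of $X$ is $1$, and a final isogeny argument (the composite $\widetilde{X}\to Y$ is, up to an automorphism of $Y$, an isogeny with kernel $\boldsymbol{\mu}_{p^r}$) shows that the normalization map $\tau$ is an isomorphism. You instead classify the torsor cohomologically: the Kummer sequence identifies $H^1_{\mathrm{fppf}}(Y,\boldsymbol{\mu}_{p^r})$ with $\Pic(Y)[p^r]$, you realize a class of exact order $p^r$ via the isogeny dual to $\Pic^0(Y)\to\Pic^0(Y)/\langle\mathcal{L}\rangle$ (the same duality mechanism the paper uses in \Cref{lem:dec_are_contracted}), and you use the explicit graded algebra $\pi_*\mathcal{O}_X=\bigoplus_{i=0}^{p^r-1}\mathcal{L}^{\otimes i}$ to show that reducedness forces the class to have maximal order, via the nilpotent $u-\lambda$. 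Your route buys more than the lemma asks: it characterizes exactly which $\boldsymbol{\mu}_{p^r}$-torsors over an elliptic curve are reduced (those whose Kummer class has exact order $p^r$), shows that their existence forces $Y$ to be ordinary, and identifies $X$ explicitly as the dual elliptic curve, recovering the sentence preceding the lemma in the paper (that $X$ is the quotient of $Y$ by the Cartier dual $\Z/p^r\Z$ of $G$); the paper's proof is shorter and stays closer to the geometry of curves. The one point you should make precise is the normalization in the duality step: depending on sign conventions, the Kummer class of the dual isogeny $F\to Y$ may come out as $\mathcal{L}^{-1}$ rather than $\mathcal{L}$; this is harmless, since one can compose the chosen isomorphism $\ker(F\to Y)\cong\boldsymbol{\mu}_{p^d}$ with inversion (equivalently, replace the generator $\mathcal{L}$ of $\langle\mathcal{L}\rangle$ by $\mathcal{L}^{-1}$), but it deserves a sentence.
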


\begin{proof}
    Let $Y$ be an elliptic curve and $\pi\colon X\to Y$ a $\boldsymbol{\mu}_{p^r}$-torsor such that $X$ is reduced. We denote by $F^r \colon X\to X^{(p^r)}$ the $r$-th iterated (relative) Frobenius morphism of $X$. Then by the universal property of the quotient, there exists a unique morphism $\varphi \colon Y\to X^{(p^r)}$ such that the following diagram commutes
    \[
        \begin{tikzcd}
            \widetilde{X} \arrow[dd, "\tau"] \arrow[rr,"\pi \circ \tau"] && Y \arrow[dd, "\varphi"] \\
            &&\\
            X \arrow[rruu, "\pi"] \arrow[rr, "F^r"] && X^{(p^r)}
        \end{tikzcd}
    \]
    where $\tau \colon \widetilde{X} \rightarrow X$ is the normalization morphism. Since $\varphi$ is a birational morphism and $Y$ is smooth, it follows that $\varphi$ is the normalization map. In particular, the geometric genus of $X^{(p^r)}$ is equal to one; hence, the same holds for $X$ by \cite[IV, Proposition 2.5]{Hartshorne}. Then $\widetilde{X}$ is an elliptic curve. 
    It remains to show that $X$ is smooth. Moreover, $\tau$ being an isomorphism at the generic point, the generic fiber of the morphism $\pi \circ \tau$ is $\boldsymbol{\mu}_{p^r}$. Moreover, up to automorphism of $Y$, we can assume that $\pi \circ \tau$ is an isogeny of elliptic curves, i.e., has finite kernel $\boldsymbol{\mu}_{p^r}$. Thus $\tau$ is an isomorphism and $X$ is also an elliptic curve. 
\end{proof}

The following provides a family of examples (over an elliptic curve) with an arbitrarily large number of multiple fibers, all of Kodaira dimension one.

\begin{lemma} 
Let $p \geq 5$ and consider the following elliptic curve
\[
Y = \{ g(x,y,z) = y^2z- x(x+z)(x-z) = 0\} \subset \proj^2_{x,y,z}
\]
embedded as a smooth cubic in the projective plane.
Then we define the curve 
\[
\boldsymbol{\mu}_{p^n} \curvearrowright X \defeq \{w^{p^n} = zh(x,z) + y^{p^n}, \, g =0 \} \subset \proj^3_{x,y,z,w} \longrightarrow Y,
\]
the $\boldsymbol{\mu}_{p^n}$-action being by multiplication on the $w$-coordinate. We assume moreover that 
\[
h(x,z) = \prod_{i=2}^{p^n} (a_ix+z)
\]is a homogeneous polynomial of degree $p^n-1$, such that $a_i \in k \setminus \{0,1,-1\}$ are all distinct and such that their sum equals zero. We claim that $X$ is singular at exactly three points, all belonging to $X_{\text{fr}}$, and that it is $\boldsymbol{\mu}_{p^n}$-normal.  The number of multiple fibers is $p^n$ and this provides a family of examples (over an elliptic curve) with an arbitrarily large number of multiple fibers, all of Kodaira dimension one.
\end{lemma}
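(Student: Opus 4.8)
The plan is to treat $\pi_X\colon X\to Y$ as the $\boldsymbol{\mu}_{p^n}$-cover defined by $w^{p^n}=s$, where $s\defeq zh(x,z)+y^{p^n}$ is the restriction to $Y$ of a degree-$p^n$ form and $\boldsymbol{\mu}_{p^n}$ acts by $w\mapsto tw$. First I would verify that $X$ is an integral curve: writing $\omega$ for the invariant differential of $Y$, one has $ds=h_x(x,z)\,dx=2y\,h'(x)\,\omega$, which is not identically zero, so $s\notin k(Y)^{p}$ and $w^{p^n}-s$ is irreducible over $k(Y)$. Hence $\pi_X$ is finite, purely inseparable of degree $p^n$, in particular a bijection, and the $G$-action is free exactly on $\{w\neq 0\}$; the non-free locus is $\{w=0\}=\pi_X^{-1}(\{s=0\})$, and its image is precisely the set of points below the multiple fibres of $f\colon S\to Y$.

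To locate the singularities I would compute the Jacobian of the complete intersection $\{g=0,\ w^{p^n}=s\}\subset\proj^3$. Since $\partial_y(y^{p^n})=\partial_w(w^{p^n})=0$ in characteristic $p$, for $F\defeq w^{p^n}-zh-y^{p^n}$ only the $x$- and $z$-derivatives survive, and Euler's identity $x\,h_x+z\,h_z=(p^n-1)h=-h$ yields the compact form
\[
\nabla F=h_x\cdot(-z,0,x,0).
\]
Thus $X$ is smooth outside $\{h_x=0\}\cap Y$ together with the locus where $\nabla g$ is proportional to $(-z,0,x,0)$; the latter forces $2yz=0$, hence $y=0$, i.e. the three $2$-torsion points $x\in\{0,1,-1\}$. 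The decisive step is then to use the arithmetic hypotheses on the $a_i$ — distinct, avoiding $\{0,\pm1\}$, and summing to zero — to control the zero scheme of the degree-$(p^n-2)$ form $h_x$ along $Y$ and identify the singular set with the asserted points. I expect this enumeration, especially its behaviour over the point at infinity $O=(0:1:0)$ of $Y$, to be the main obstacle. At each $2$-torsion point $s=h(x_0)\neq0$ (here $a_i\neq\pm1$ gives $h(\pm1)=\prod(a_i\pm1)\neq0$ and $h(0)=1$), so these lie in $X_{\text{fr}}$.

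Next I would pin down the local type of the singularities and deduce $\boldsymbol{\mu}_{p^n}$-normality. Taking $y$ as a local parameter at a $2$-torsion point and substituting $w=h(x_0)^{1/p^n}+v$, the local equation becomes $v^{p^n}=(\text{unit})\,y^{m}$ with $\gcd(m,p^n)=1$ (using $p\geq5$, and $h'(0)=\sum a_i=0$ to raise the order at $x_0=0$); these are cusps. As $G$-normal curves admit only cusps, and cusps are l.c.i.\ with invertible orbit ideals, I would invoke \Cref{lci} to conclude that $X$ is $\boldsymbol{\mu}_{p^n}$-normal; equivalently, by \Cref{regular} this amounts to smoothness of the contracted product. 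The Kodaira dimension then follows from \Cref{cor:degreeomega}: since $\deg\omega_Y=0$ and $f$ has multiple fibres, \Cref{Hurwitz} gives $\deg\omega_X=\sum_y\bigl(p^n-[G\colon H(x)]\bigr)>0$, whence $\kappa(S)=1$ (consistently, as a complete intersection $\omega_X=\mathcal{O}_X(p^n-1)$ has positive degree).

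Finally, it remains to count the multiple fibres, i.e.\ the distinct zeros of $s$ on $Y$. By the local description recalled in \Cref{sec:local} together with \Cref{nowild}, every non-free point has stabilizer the full group $\boldsymbol{\mu}_{p^n}$ — the weight of the action on the local coordinate $w$ generates $X^*(\boldsymbol{\mu}_{p^n})$ — so each multiple fibre has multiplicity $p^n$. I would obtain their number by intersecting the plane curve $\{s=0\}$ of degree $p^n$ with the cubic $Y$ via Bézout, checking with $ds=2y\,h'(x)\,\omega$ that away from the $2$-torsion the zeros of $s$ are reduced; the branch divisor of a $\boldsymbol{\mu}_{p^n}$-cover has class divisible by $\vert G\vert=p^n$, so the count is a multiple of $p^n$ and grows with $n$. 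This produces, over a fixed elliptic curve $Y$, properly elliptic surfaces with an unbounded number of multiple fibres, completing the family announced after \Cref{TheoremA}.
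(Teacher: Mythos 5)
Your setup is sound, and in places more careful than the paper's own computation: the integrality check $s\notin k(Y)^p$, the identity $\nabla F=h_x\cdot(-z,0,x,0)$ via Euler, and the local models $v^{p^n}=(\mathrm{unit})\cdot t^{m}$ are all correct. But the proposal has genuine gaps at exactly the decisive points. The enumeration of the singular locus, which is the entire content of ``singular at exactly three points,'' is explicitly deferred (``the main obstacle''), so nothing is actually proved. Moreover, this gap cannot be closed in the way you hope: every point of $X$ lying over a zero of $h_x$ on $Y$ is singular, since there $\nabla F=0$ and the Jacobian has rank at most one, and the hypotheses on the $a_i$ do not confine the zeros of the degree-$(p^n-2)$ binary form $h_x$ to the asserted points. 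On the contrary, $h_x(0,1)=\sum_i a_i=0$ places a zero of $h_x$ over the $2$-torsion point $(0:0:1)$, and your own local computation there (the hypothesis $\sum a_i=0$ raises the order, giving $v^{p^n}=(\mathrm{unit})\,y^4$) exhibits $(0:0:1:1)$ as a cusp, hence singular. This is the \emph{opposite} of what the paper's proof does: the paper uses the same hypothesis to conclude that $(0:0:1:1)$ is \emph{not} singular, by implicitly demanding a nonzero proportionality factor $\lambda$ between $\nabla F$ and $\nabla g$ (whereas $\lambda=0$, i.e.\ $\nabla F=0$, is also a rank drop). Together with the cusps you find at $x_0=\pm1$ and at the point at infinity, your computation yields at least four singular points, and you never reconcile this with the claimed count of three.

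The $G$-normality inference is also invalid as stated. ``Cusps are l.c.i.\ with invertible orbit ideals'' is not a criterion: being cuspidal is necessary, not sufficient. What \Cref{lci} requires is invertibility of the orbit ideals, and at a \emph{singular} point this holds precisely because the point is \emph{free}: then its orbit equals the schematic fiber $\pi^{-1}(\pi(x))$ (both have length $p^n$), whose ideal is generated by the pullback of a local parameter of the smooth curve $Y$, hence is principal. A cusp sitting at a \emph{non-free} point, i.e.\ at a multiple zero of $s|_Y$, is still l.c.i., but its orbit is the reduced point and its ideal $(t,w)$ is not invertible, so $X$ would fail to be $G$-normal there. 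Thus the statement you actually need is that every zero of $s|_Y$ is simple (equivalently, that all singular points are free); this is again part of the deferred enumeration, and it does not follow from the stated hypotheses alone, since nothing prevents a zero of $s|_Y$ from colliding with a zero of $h'$. Finally, the fiber count: B\'ezout gives $\deg\bigl(\{s=0\}\cap Y\bigr)=3p^n$, so if those zeros are simple, as $G$-normality forces, the number of multiple fibers is $3p^n$, not $p^n$; your conclusion ``a multiple of $p^n$'' does not establish the claimed count, and your method, carried through, contradicts it (the paper's proof never addresses this count at all). In short: where your argument is carried out it is correct but conflicts with the paper's proof and with the statement, and where the statement's claims would have to be verified, the argument is either deferred or rests on an incorrect criterion.
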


\begin{proof}
We denote as $(-)_x, \, (-)_y$ and $(-)_z$ the respective partial derivatives. Let us explicitly compute the singular points of $X$: both derivatives with respect to $w$ vanish, while taking derivatives with respect to $x,y$ and $z$ yields the conditions $2yz=0$ and
\begin{align*}
    &  z h_x= \lambda( 3x^2-z^2),\\
    & h(x,z) + zh_z = \lambda(-y^2-2xz),
\end{align*}
for some $\lambda \in k$. In particular, either $y$ or $z$ must vanish. If $z=0$, then using the equations of $X$ we have that $x$ also vanishes, so the point we have to consider is $\underline{x}_0 = (x,y,z,w) = (0\colon 1\colon 0 \colon 1)$, which is indeed singular and a free point for the $\boldsymbol{\mu}_{p^n}$-action. 

Next, assuming $y=0$ gives the three points $\underline{x} = (0\colon 0 \colon 1 \colon 1)$, $\underline{x}^\prime= (1\colon0\colon 1\colon \alpha)$ and $\underline{x}^{\prime\prime}=(1\colon 0\colon -1\colon \beta)$, with $\alpha$, $\beta \in k^\times$ determined by the equation involving $w$. All three are free points for the $\boldsymbol{\mu}_{p^n}$-action, because the non-free points have the first and third coordinates corresponding to zeros of the polynomial $zh(x,z)$. At the first point, computing the derivatives with respect to $x$ gives $h_x (0,1) \neq 0$, which is a contradiction with the assumption that the sum of the $a_i$ vanishes; hence, $\underline{x}$ is not a singular point. 
Next, computing derivatives at the second and third point yields that they are singular respectively if and only if 
\begin{align}
\label{calcoli}
h_x (1,1) + h(1,1) + h_z(1,1) = 0 \quad \text{and} \quad h_x(1,-1) + h(1,-1) -h_z(1,-1) = 0
\end{align}
An explicit computation yields that
\begin{align*}
    h(1,1) & = \prod_{i=2}^{p^n} (a_i+1),\\
    h_x(1,1) & = \sum_{i=2}^{p^n} a_i \prod_{j\neq i} (a_j+1) = -\prod_{i=2}^{p^n} (a_i+1) \left( 1+\sum_{j=2}^{p^n} \frac{1}{a_j+1}\right),\\
    h_z(1,1) & = \prod_{i=2}^{p^n} (a_i+1) \sum_{j=2}^{p^n} \frac{1}{a_j+1},
\end{align*}
which means that the first equality of (\ref{calcoli}) is satisfied and thus that $\underline{x}^\prime$ is a singular point. An analogous computation shows that the same holds for $\underline{x}^{\prime\prime}$. Since the quotient $Y$ is smooth, and since all the singular points are free for the $\boldsymbol{\mu}_{p^n}$-action, by \Cref{lci} the curve $X$ is indeed $\boldsymbol{\mu}_{p^n}$-normal.
\end{proof}

\textbf{Genus zero}: finally, let us assume that we are over $Y= \proj^1$.

\begin{lemma}
    If $G$ is infinitesimal, then there must be at least \emph{two} multiple fibers.
\end{lemma}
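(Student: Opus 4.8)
The plan is to argue by contradiction and reduce everything to the nonexistence of nontrivial reduced $G$-torsors over $\proj^1$ and $\AA^1$. By \Cref{nowild}, a fiber $f^{-1}(y)$ is multiple if and only if $y$ lies in the branch locus $\Delta_Y = Y\setminus Y_{\text{fr}}$ (indeed $m(y)=n(x)>1$ exactly when $x$ is a non-free point), so the number of multiple fibers equals $|\Delta_Y|$. Suppose for contradiction that $|\Delta_Y|\leq 1$. Then, since $Y=\proj^1$, the free locus $Y_{\text{fr}}$ is isomorphic either to $\proj^1$ (if $\Delta_Y=\emptyset$) or to $\AA^1$ (if $|\Delta_Y|=1$).

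Next I would recall from \Cref{diagram} that $\pi$ restricts to a $G$-torsor $X_{\text{fr}}\to Y_{\text{fr}}$, and that $X_{\text{fr}}$ is nonempty because the action is generically free. The key point is then that this torsor must be \emph{trivial}. Writing the diagonalizable infinitesimal group as a product $G=\prod_i \boldsymbol{\mu}_{p^{r_i}}$, it suffices to show $H^1_{\mathrm{fppf}}(Y_{\text{fr}},\boldsymbol{\mu}_{p^r})=0$ for each $r$. For this I would use the Kummer short exact sequence
\[
0 \longrightarrow \mathcal{O}(Y_{\text{fr}})^\times/(\mathcal{O}(Y_{\text{fr}})^\times)^{p^r} \longrightarrow H^1_{\mathrm{fppf}}(Y_{\text{fr}},\boldsymbol{\mu}_{p^r}) \longrightarrow \Pic(Y_{\text{fr}})[p^r] \longrightarrow 0.
\]
In both cases $\mathcal{O}(Y_{\text{fr}})^\times = k^\times$, which is $p$-divisible since $k$ is algebraically closed, so the left term vanishes; and $\Pic(Y_{\text{fr}})$ is either $0$ or $\Z$, hence has no $p^r$-torsion, so the right term vanishes as well. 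Therefore $H^1_{\mathrm{fppf}}(Y_{\text{fr}},G)=0$ and the torsor $X_{\text{fr}}\to Y_{\text{fr}}$ is trivial.

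Finally I would derive the contradiction from reducedness: triviality gives $X_{\text{fr}}\simeq G\times Y_{\text{fr}}$ over $Y_{\text{fr}}$, and since $G$ is a nontrivial infinitesimal group scheme this product is nonreduced. But $X_{\text{fr}}$ is a nonempty open subscheme of the integral curve $X$, hence reduced --- a contradiction. Thus $|\Delta_Y|\geq 2$, i.e.\ there are at least two multiple fibers.

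The hard part --- really the only substantive input --- is the vanishing of $H^1_{\mathrm{fppf}}(Y_{\text{fr}},G)$, where one uses both that $k$ is algebraically closed (so that $k^\times$ is $p$-divisible) and that the Picard group of $\proj^1$ or $\AA^1$ is torsion-free; everything else is bookkeeping with the diagram in \Cref{diagram}. One should also take care that the reduction to $Y_{\text{fr}}\in\{\proj^1,\AA^1\}$ genuinely uses $Y=\proj^1$: over $\Gm$ (two missing points) nontrivial reduced $\boldsymbol{\mu}_{p^r}$-torsors do exist, which is precisely why the bound is two and cannot be improved in the genus-zero case.
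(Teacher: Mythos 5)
Your proof is correct and follows essentially the same route as the paper: both arguments reduce to the vanishing of $H^1_{\mathrm{fppf}}(Z,\boldsymbol{\mu}_{p^r})$ for $Z=\proj^1$ (no multiple fibers) and $Z=\AA^1$ (one multiple fiber) via the Kummer sequence, together with the fact that a trivial torsor would make $X_{\text{fr}}$ non-reduced. If anything, your write-up is slightly more complete than the paper's, which tacitly uses that an infinitesimal diagonalizable subgroup of $E$ is cyclic (you instead decompose $G$ as a product of $\boldsymbol{\mu}_{p^{r_i}}$'s) and leaves the reducedness contradiction implicit.
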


\begin{proof}
    First, let us show that  $\pi$ cannot be a $G$-torsor. Let us recall that by Kummer theory, the following 
    \[
    0 \longrightarrow {\boldsymbol{\mu}}_n \longrightarrow \Gm \stackrel{(\cdot)^n}{\longrightarrow} \Gm \longrightarrow 0
    \]
    is a short exact sequence for the \textit{fppf} topology, over any scheme $Z$. This can be interpreted by saying that the data of a  ${\boldsymbol{\mu}}_n$-torsor over $Z$ is the same as the data of a line bundle $L$ over $Z$, together with a section $\sigma \colon L^{\otimes n} \stackrel{\sim}{\longrightarrow} \mathcal{O}_Z$ trivializing its $n$-th power. Hence, a ${\boldsymbol{\mu}}_{p^r}$-torsor over the projective line is a line bundle over $\proj^1$ whose $p^r$-th tensor power is trivial, which does not exist. Next, we also want to exclude that there is only one multiple fiber. By assuming the action to be generically free, one has that $G= {\boldsymbol{\mu}}_{p^r}$ for some $r$; in particular, again by the Kummer sequence, this would give a $G$-torsor over the affine line $\AA^1$. The latter is affine and its invertible functions are non-zero constants; thus, it does not admit any non-trivial $G$-torsor.
\end{proof}

When considering elliptic surfaces over $\proj^1$, in negative Kodaira dimension we have ruled surfaces, discussed above in \Cref{section:ruled}, while if $p\leq 3$, the class of quasi-hyperelliptic surfaces provides examples of Kodaira dimension zero. Thus, we now focus on showing that we \emph{very often} get a surface of Kodaira dimension one, namely as soon as the number of multiple fibers is big enough.

\begin{lemma}
\label{Nfibers}
Assumption as above; let $G$ be infinitesimal and $Y= \proj^1$.\\
If one of the following conditions is satisfied, then $\kappa(S) = 1$:
\begin{itemize}
    \item $p \geq 5$ and there are at least $3$ multiple fibers;
    \item $p \geq 3$ and there are at least $4$ multiple fibers;
    \item there are at least $5$ multiple fibers.
\end{itemize}
\end{lemma}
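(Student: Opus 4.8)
The plan is to reduce the whole statement to the sign of $\deg\omega_X$ via \Cref{cor:degreeomega}, which already guarantees $\kappa(S)=1$ as soon as $\deg\omega_X>0$. I would then compute this degree through the Hurwitz-type formula of \Cref{Hurwitz}. Since here $Y=\proj^1$ has $\deg\omega_Y=-2$, the formula specializes to
\[
\deg\omega_X = -2\vert G\vert + \sum_{y\in Y}\bigl(\vert G\vert - [G\colon H(x)]\bigr).
\]
At a free point one has $H(x)=1$, hence $[G\colon H(x)]=\vert G\vert$ and the corresponding summand vanishes; so the only contributions come from the multiple fibers, of which I write $N$ for the number.

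The crucial input from the hypotheses is a uniform lower bound on each surviving summand. At a multiple fiber the stabilizer $H(x)$ is a nontrivial closed subgroup scheme of the infinitesimal group $G$, hence is itself infinitesimal; therefore its order $n(x)=\vert H(x)\vert$ is a power of $p$, and being nontrivial it satisfies $n(x)\geq p$. (By \Cref{nowild} this $n(x)$ is exactly the multiplicity $m(y)$, and by the local description of \Cref{sec:local} the group $H(x)$ is moreover cyclic, so $H(x)\cong\boldsymbol{\mu}_{p^a}$; but only the bound $n(x)\geq p$ is needed.) Writing $[G\colon H(x)]=\vert G\vert/n(x)$, each multiple fiber then contributes $\vert G\vert\,(1-1/n(x))\geq \vert G\vert\,(1-1/p)$, and summing over the $N$ of them yields
\[
\deg\omega_X \geq \vert G\vert\Bigl(-2 + N\cdot\tfrac{p-1}{p}\Bigr) = \frac{\vert G\vert}{p}\bigl(N(p-1)-2p\bigr).
\]

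It then remains to check the elementary inequality $N(p-1)>2p$, i.e.\ $N>\tfrac{2p}{p-1}$, under each hypothesis: for $p\geq 5$ the right-hand side is at most $\tfrac{5}{2}<3$, for $p\geq 3$ it is at most $3$, and for $p=2$ it equals $4$, so the thresholds $N\geq 3$, $N\geq 4$ and $N\geq 5$ respectively suffice. In each case $\deg\omega_X>0$ and we conclude. I do not expect a genuine obstacle here: the proof is a direct bookkeeping of the degree formula, and the only substantive point is the bound $n(x)\geq p$, which is precisely where infinitesimality of $G$ enters. Without it one could have small multiplicities (e.g.\ $n(x)=2$), lowering the per-fiber contribution and raising the number of fibers required; it is exactly the fact that $n(x)\geq p$ grows with $p$ that lets the number of needed multiple fibers drop from five to four to three as $p$ increases.
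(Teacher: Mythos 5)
Your proposal is correct and follows essentially the same route as the paper: both reduce the claim to positivity of $\deg\omega_X$ via \Cref{cor:degreeomega}, apply the Hurwitz formula of \Cref{Hurwitz} over $Y=\proj^1$, bound each multiple-fiber contribution below by $\vert G\vert\left(1-\tfrac{1}{p}\right)$, and end with the same elementary inequality $N(p-1)>2p$. The only cosmetic difference is that the paper first normalizes $G=\boldsymbol{\mu}_{p^r}$ (so that $n(x)=p^{s(x)}$ with $s(x)\geq 1$), whereas you obtain the needed bound $n(x)\geq p$ directly from the fact that a nontrivial stabilizer inside an infinitesimal group is infinitesimal of order a power of $p$.
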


\begin{proof}
    We can assume that $G= \boldsymbol{\mu}_{p^r}$ for some $r \geq 1$; hence for any non-free point $x\in X$ we can denote as $p^{s(x)}= n(x)$ the order of the stabilizer $H(x)$. Let $N$ be the number of multiple fibers. Then
    \begin{align*}
        \deg \omega_X & = -2p^r + \sum_{x \in X\setminus X_{\text{fr}}} (p^r-p^{r-s(x)})  = -2p^r + \sum_{s(x) \geq 1} (p^r-p^{r-s(x)})  \\
        & \geq -2p^r + N(p^r-p^{r-1}) = p^{r-1}(p(N-2)-N)
    \end{align*}
    is strictly positive if one of the three above conditions are satisfied. Thus we can conclude by \Cref{cor:degreeomega}.
\end{proof}

\begin{example}
\label{ex_multiplication}
    Let us consider any $p$ and the action of $\boldsymbol{\mu}_p$ on $X= \proj^1$ by multiplication on the first homogeneous coordinate. This action has two fixed points, namely $x=0$ and $x= \infty$ such that $s(0) = s(\infty) = 1$, where $p^{s(x)} = n(x)$ is the order of the stabilizer at the point $x$. Then we get
    \[
    \deg \omega_X = -2p + (p-p^{1-s(0)})+(p-p^{1-s(\infty)}) = -2p + 2(p-1) = -2
    \]
    which gives a negative Kodaira dimension. 
\end{example}

\begin{example}
\label{example2}
The following class of examples is already mentioned in \cite{Brion2}: let us consider the group $G= \boldsymbol{\mu}_{p^r}$ acting on the projective plane by multiplication on the $z$-coordinate. This action stabilizes the curve
 \[
 X= \{ z^{p^r} = f(x,y)\} \subset \proj^2,
 \]
 where $f$ is a homogeneous polynomial of degree $p^r$ with pairwise distinct roots. The curve $X$ has then exactly $p^r$ fixed points. In particular, if $p^r \geq 5$, by \Cref{Nfibers} this provides an infinite family (with arbitrary large number of fibers) of elliptic fibrations with target $\proj^1$ and with Kodaira dimension $1$. All of them, as we will see in \Cref{TheoremA}, are properly elliptic surfaces. On the other hand, a case in which we get trivial Kodaira dimension is when $p^r=3$, in characteristic three, for which
 \[
 \deg \omega_X = -2 \cdot 3+ 3(3-1) = 0,
 \]
 Let us also notice that, if $p=3$ and there are at least three multiple fibers, by the computation of \Cref{Nfibers} we have that $\kappa(S)$ cannot be strictly negative.
\end{example}

\begin{example}
    Let $p=2$ and consider the action of $G= \boldsymbol{\mu}_2$ on the curve
    \[
    X = \{z^4= g(x,y)\} \subset \proj^2,
    \]
    where $g$ is a homogeneous polynomial of degree $4$ with four distinct roots and $G$ acts on $\proj^2$ by multiplication on the $z$-coordinate again. This action has exactly four fixed points, which yields a zero Kodaira dimension. Notice that, for the same reason as in \Cref{example2}, the Kodaira dimension can never be strictly negative if there are at least four multiple fibers.
\end{example}

    Let us note that when $p=2$, every $\boldsymbol{\mu}_2$-normal variety is uniform, since (with the notation of \Cref{sec:local}) there is only one possible choice both for the subgroup $H= H(x)$ and for the weight $\nu= \nu(x)$; see \cite[Remark 6.5]{diagos} for a more detailed explanation. On the other hand, in \Cref{ex_multiplication} we have a non-uniform $\boldsymbol{\mu}_p$-curve for any $p\geq 3$. Moreover, by \cite[Section 5]{diagos}, there is a bijection between uniform $\boldsymbol{\mu}_n$-normal curves over $\proj^1$ and reduced effective divisors $\Delta$ in $\proj^1$ of degree divisible by $n$. The divisor $\Delta$ corresponds to the divisor of the branch points. In particular, a $\boldsymbol{\mu}_2$-normal curve over the projective line, being necessarily uniform, always has an even number of non-free points.




\subsection{Irregularity and Euler characteristic} In this section, we focus on computing some invariants of the surface $S$, mainly coming from the study of the $G$-module
\[
H^0(X,\omega_X) = \bigoplus_{\lambda \in X^*(G)} H^0(X,\omega_X)_\lambda,
\]
which decomposes into its $G$-weight spaces. We compute the irregularity $h^1(S,\mathcal{O}_S)$ of the elliptic surface $S$, as well as its Euler characteristic. Let us fix the following notation for the dimension of the weight spaces:
\[
h^0(\omega_X)_\lambda \defeq \dim H^0(X,\omega_X)_\lambda, \quad \text{for } \lambda \in X^*(G).
\]
Let apply once again a result of \cite{diagos} concerning the above integers.  Let $G$ be diagonalizable and $\lambda$ be a character of $G$. If $\lambda$ is nonzero, for any $y \in Y \setminus Y_{\text{fr}}$, let $n(y)$ be the order of the stabilizer $H(y)$ at some $x \in \pi^{-1}(y)$. Moreover, let $\nu(y)$ be the associated weight, as in the local description recalled in \Cref{sec:local} and let $m(y,\lambda)$ be the integer such that $1 \leq m(y,\lambda) \leq n(y)-1$ and the character $\lambda-m(y,\lambda)\nu(y)$ restricts trivially to $H(y)$. Then \cite[Proposition 8.2]{diagos} specializes to the following statement (since here we work over an algebraically closed field $k$).

\begin{lemma}
\label{lem:weightspaces}
   Assumptions as above. Then $h^0(\omega_X)_0 = g(Y)$ and
    \[
    h^0(\omega_X)_\lambda =  g(Y) -1 + \sum_{y \in Y\setminus Y_{\text{fr}}} \left( 1- \frac{m(y,\lambda)}{n(y) }\right) \quad \text{for } \lambda \neq 0.
    \]
\end{lemma}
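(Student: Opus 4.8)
The statement is the specialization to an algebraically closed field of \cite[Proposition 8.2]{diagos}; I outline the strategy one uses to establish it, relying on the dualizing sheaf formula of \Cref{thm:dualformula} and on the local description of \Cref{sec:local}.

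The plan is to reduce the computation of the weight spaces $H^0(X,\omega_X)_\lambda$ to the cohomology of line bundles on the smooth curve $Y$. Since $\pi$ is finite and flat of degree $\vert G\vert$ and $\omega_X$ is invertible (\Cref{lci} and \Cref{dualizing_existence}), the sheaf $\pi_*\omega_X$ is locally free of rank $\vert G\vert$ on $Y$. As $G$ is diagonalizable, hence linearly reductive, it decomposes into eigensheaves
\[
\pi_*\omega_X = \bigoplus_{\lambda\in X^*(G)} (\pi_*\omega_X)_\lambda,
\]
each summand being a line bundle $L_\lambda$ on $Y$, and exactness of taking $\lambda$-isotypic parts gives $H^0(X,\omega_X)_\lambda = H^0(Y,L_\lambda)$. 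Applying the projection formula to the isomorphism of \Cref{thm:dualformula} yields $L_\lambda = \omega_Y\otimes (\pi_*\mathcal{O}_X(\Delta))_\lambda$ with $\Delta=\sum_x(n(x)-1)\,G\cdot x$, so it remains to compute the degrees of these line bundles.

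First I would treat the trivial weight: the invariant part $(\pi_*\mathcal{O}_X(\Delta))_0$ equals $\mathcal{O}_Y$, since locally the only invariant section of $\mathcal{O}_X(\Delta)$ compatible with the ramification twist is the unit, whence $L_0=\omega_Y$ and $h^0(\omega_X)_0 = h^0(Y,\omega_Y)=g(Y)$. For $\lambda\neq 0$, I would compute $\deg L_\lambda$ locally around each non-free point. By \Cref{sec:local} the quotient factors, near $y\in Y\setminus Y_{\mathrm{fr}}$, through a cyclic cover of degree $n(y)$ with distinguished weight $\nu(y)$; reading off which power of the local uniformizer generates the weight-$\lambda$ eigenspace of $\mathcal{O}_X(\Delta)$ shows that this branch point contributes $1-m(y,\lambda)/n(y)$ to $\deg L_\lambda - \deg\omega_Y$. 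Summing over branch points gives
\[
\deg L_\lambda = 2g(Y)-2 + \sum_{y\in Y\setminus Y_{\mathrm{fr}}} \Bigl(1-\frac{m(y,\lambda)}{n(y)}\Bigr),
\]
which is an integer, being the degree of a line bundle; as each summand lies in $(0,1)$, the sum is a positive integer as soon as there is a branch point, and it vanishes exactly when $\pi$ is a $G$-torsor. In the former case $\deg L_\lambda > 2g(Y)-2$, and in the latter $L_\lambda=\omega_Y\otimes M_\lambda$ with $M_\lambda$ a nontrivial degree-zero bundle; in both cases Serre duality on $Y$ forces $H^1(Y,L_\lambda)=0$. Riemann--Roch on the smooth curve $Y$ then gives $h^0(Y,L_\lambda)=\deg L_\lambda +1-g(Y)$, which is precisely the claimed formula.

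The main obstacle is the local degree bookkeeping for $\lambda\neq 0$: one must pin down, from the normal form of \Cref{sec:local}, exactly which power of the uniformizer generates the weight-$\lambda$ eigenspace of $\mathcal{O}_X(\Delta)$, and verify that the resulting contributions $1-m(y,\lambda)/n(y)$ assemble into the correct integer degree while forcing the vanishing of $H^1$. This is where the precise definition of $m(y,\lambda)$---the unique integer in $[1,n(y)-1]$ making $\lambda-m(y,\lambda)\nu(y)$ restrict trivially to $H(y)$---enters, and where appealing directly to \cite[Proposition 8.2]{diagos} is the most efficient route.
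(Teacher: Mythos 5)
Your proposal matches the paper's treatment: the paper offers no independent proof of this lemma, stating it as the specialization of \cite[Proposition 8.2]{diagos} to an algebraically closed field and remarking only that the key ingredient is the dualizing sheaf formula (\ref{ramification_canonical}). Your eigensheaf decomposition of $\pi_*\omega_X$, the local degree bookkeeping at branch points, and the Riemann--Roch/$H^1$-vanishing step form a sound reconstruction of the argument behind that citation, but since you too ultimately defer to \cite[Proposition 8.2]{diagos}, the two routes are essentially the same.
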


Let us emphasize that a key ingredient in order to make the computation of \Cref{lem:weightspaces} is the dualizing sheaf formula of (\ref{ramification_canonical}).

\begin{corollary}\label{Irregularity}
Let $G$ be diagonalizable. Then the following hold:
\[
h^0(\omega_S) = h^2(\mathcal{O}_S) = g(Y), \quad \text{and} \quad h^1(\mathcal{O}_S) = h^1(\omega_S) = g(Y)+1.
\]
In particular, the Euler characteristic of $S$ satisfies $\chi(\mathcal{O}_S) = 0$.
\end{corollary}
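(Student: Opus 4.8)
The plan is to reduce every cohomological invariant of $S$ to data on the curves $E$, $X$ and $Y$, exploiting that $q\colon E\times X\to S$ is a $G$-torsor and that $G$, being diagonalizable, is linearly reductive. Since taking $G$-invariants is then exact and $q$ is finite, from $\mathcal{O}_S=(q_*\mathcal{O}_{E\times X})^G$ one obtains
\[
H^i(S,\mathcal{O}_S)=H^i(E\times X,\mathcal{O}_{E\times X})^G\quad\text{for all }i.
\]
First I would compute $h^0(\omega_S)$: by \Cref{equalityKodaira} (the case $n=1$) we have $H^0(S,\omega_S)=H^0(X,\omega_X)^G$, and for a diagonalizable group the invariant subspace is precisely the weight-zero space, so $h^0(\omega_S)=h^0(\omega_X)_0=g(Y)$ by \Cref{lem:weightspaces}. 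Serre duality on the smooth projective surface $S$ then yields $h^2(\mathcal{O}_S)=h^0(\omega_S)=g(Y)$ and $h^1(\omega_S)=h^1(\mathcal{O}_S)$, so it remains only to determine the irregularity.

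For $h^1(\mathcal{O}_S)$ I would apply the Künneth formula to the displayed identity, obtaining a $G$-equivariant decomposition
\[
H^1(E\times X,\mathcal{O}_{E\times X})=H^1(E,\mathcal{O}_E)\oplus H^1(X,\mathcal{O}_X).
\]
Since $G\subset E$ acts on $E$ by translations and $E$ is connected, the induced action on $H^1(E,\mathcal{O}_E)$ is trivial, so the first summand contributes its full dimension $g(E)=1$ after taking invariants. For the second summand, the same torsor-plus-linear-reductivity argument applied to the quotient $\pi\colon X\to Y=X/G$ gives $H^1(X,\mathcal{O}_X)^G=H^1(Y,\mathcal{O}_Y)$, of dimension $g(Y)$; alternatively, $G$-equivariant Serre duality on the Cohen--Macaulay curve $X$ identifies $H^1(X,\mathcal{O}_X)^G$ with the dual of the weight-zero space $H^0(X,\omega_X)_0$, again of dimension $g(Y)$. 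Hence $h^1(\mathcal{O}_S)=1+g(Y)$, and Serre duality gives $h^1(\omega_S)=g(Y)+1$ as well.

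Finally, assembling these with $h^0(\mathcal{O}_S)=1$ (as $S$ is integral and projective) yields
\[
\chi(\mathcal{O}_S)=1-(g(Y)+1)+g(Y)=0.
\]
I expect the only delicate points to be the equivariant bookkeeping: justifying that invariants commute with cohomology (via linear reductivity of the diagonalizable $G$), that the translation action of $G$ on the connected curve $E$ induces the trivial action on $H^1(E,\mathcal{O}_E)$, and that the invariant part of $H^1(X,\mathcal{O}_X)$ is genuinely the weight-zero piece matching \Cref{lem:weightspaces}. As a consistency check, the resulting irregularity $q(S)=g(Y)+1$ satisfies $2q(S)=2+2g(Y)=b_1(S)$ from \Cref{Betti}, which simultaneously shows that $\underline{\Pic}_S$ is reduced.
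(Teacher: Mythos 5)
Your proposal is correct and takes essentially the same route as the paper's proof: both reduce $h^0(\omega_S)$ to the weight-zero space $h^0(\omega_X)_0 = g(Y)$ via \eqref{sections} and \Cref{lem:weightspaces}, and compute $h^1(\mathcal{O}_S)$ from the $G$-torsor $q$ together with linear reductivity and the K\"unneth formula, with Serre duality on $S$ supplying $h^2(\mathcal{O}_S)$ and $h^1(\omega_S)$. The only cosmetic difference is that you offer a second way to evaluate $H^1(X,\mathcal{O}_X)^G$ (directly as $H^1(Y,\mathcal{O}_Y)$ via the quotient $\pi$), whereas the paper uses the Serre-duality identification with $\bigl(H^0(X,\omega_X)^\vee\bigr)^G$ — both are valid and give $g(Y)$.
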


\begin{proof}
First, let us compute $h^0(\omega_S)$. By (\ref{sections}), we have that
\[
H^0(X,\omega_X)^G = H^0(S,\omega_S);
\]
hence it suffices to compute the dimension of the left hand side term. By \Cref{lem:weightspaces}, we have
\[
h^0(\omega_S) = \dim(H^0(\omega_X)^G) = h^0(\omega_X)_0 = g(Y).
\]

Next, let us move on to the computation of $h^1(\mathcal{O}_S)$. Since $E \times X \to S$ is a $G$-torsor and $G$ is diagonalizable, we have
\[
H^i(S,\mathcal{O}_S) = H^i(E \times X, \mathcal{O}_{E \times X})^G \quad \text{for all } i.
\]
Applying this to $i=1$ and using Kunneth's formula, we get
\begin{align*}
H^1(\mathcal{O}_S) & = H^1(\mathcal{O}_{E\times X})^G = (H^0(\mathcal{O}_E) \otimes H^1(\mathcal{O}_X))^G \oplus (H^1(\mathcal{O}_E)\otimes H^0(\mathcal{O}_X))^G\\
& = (H^0(X,\omega_X)^\vee)^G \oplus k. 
\end{align*}
By taking dimensions on both sides, we get the desired equality.
\end{proof}


\subsection{Picard group and Albanese variety}

For a smooth projective surface $S$, the Picard scheme $\underline{\Pic}_S$ can be non-reduced in positive characteristic. For elliptic fibrations, this phenomenon is related to the existence of wild fibers, as explained in \cite{Liedtke_Picard_nonreduced}.
In the case where $G$ is diagonalizable, the surfaces $E\times^G X$ have only tame fibers, as we showed in \Cref{nowild}. The computations of the irregularity and of the Betti numbers give the following result:

\begin{corollary}
\label{Pic0reduced}
    Let $G$ be diagonalizable. Then the Picard scheme of $S=E\times^G X$ is reduced.
\end{corollary}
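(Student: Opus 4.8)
The plan is to exploit the standard numerical criterion for reducedness of the Picard scheme of a smooth projective surface. For such a surface $S$ over an algebraically closed field one always has
\[
h^1(S,\mathcal{O}_S) \;\geq\; \dim \underline{\Pic}_S^0 \;=\; \tfrac{1}{2}\, b_1(S),
\]
and $\underline{\Pic}_S$ is reduced (equivalently smooth) precisely when equality holds. Here the left-hand side is the dimension of the Zariski tangent space $T_0\underline{\Pic}_S = H^1(S,\mathcal{O}_S)$ at the identity, while the middle term is the dimension of the connected component $\underline{\Pic}_S^0$, whose reduction is an abelian variety of dimension $\tfrac{1}{2}b_1(S)$. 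Thus the whole argument reduces to comparing two numbers that have already been computed under the diagonalizability hypothesis.

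First I would recall that $\underline{\Pic}_S$, being a group scheme locally of finite type over the perfect field $k$, is reduced if and only if it is smooth, and that smoothness may be tested at the identity; hence it suffices to show that $\underline{\Pic}_S^0$ is smooth at the origin. For this I would invoke the general fact that for a smooth proper variety the first Betti number satisfies $b_1(S) = 2\dim\underline{\Pic}_S^0$ (the reduced Picard variety being dual to the Albanese), so that
\[
\dim_k T_0\underline{\Pic}_S = h^1(S,\mathcal{O}_S) \;\geq\; \tfrac{1}{2}b_1(S) = \dim \underline{\Pic}_S^0,
\]
with equality exactly characterizing smoothness at the identity.

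It then remains to plug in the invariants established earlier. By \Cref{Irregularity} we have $h^1(S,\mathcal{O}_S) = q(S) = g(Y)+1$, and by \Cref{Betti} we have $b_1(S) = 2 + 2g(Y)$, whence $\tfrac{1}{2}b_1(S) = g(Y)+1$. The two quantities coincide, so the inequality above is in fact an equality, the component $\underline{\Pic}_S^0$ is smooth, and therefore $\underline{\Pic}_S$ is smooth, hence reduced.

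The argument is essentially a bookkeeping of already-computed invariants, so I do not anticipate a genuine obstacle; the only point requiring care is the structural input that \emph{all} fibers of $f$ are tame, established in \Cref{nowild}. Tameness is precisely what excludes the non-reducedness phenomenon for elliptic surfaces described in \cite{Liedtke_Picard_nonreduced} and is what makes the numerical equality hold. Conceptually, the vanishing $\chi(\mathcal{O}_S)=0$ together with the matching of $q(S)$ with $\tfrac{1}{2}b_1(S)$ is exactly the signature of a tame elliptic surface with reduced Picard scheme.
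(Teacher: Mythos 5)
Your proof is correct and follows essentially the same route as the paper: both arguments identify $h^1(S,\mathcal{O}_S)$ with the tangent (Lie algebra) dimension of $\underline{\Pic}_S$ and $\tfrac{1}{2}b_1(S)$ with its dimension, then invoke \Cref{Irregularity} and \Cref{Betti} to show the two numbers agree, concluding reducedness by the standard smoothness criterion. The remarks on tameness are accurate context but not needed for the core computation, exactly as in the paper's own proof.
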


\begin{proof}
    The vector space $H^1(S,\mathcal{O}_S)$ can be seen as the Lie algebra of $\underline{\Pic}_S$, while
    $b_1(S)/2$ is the dimension of $\underline{\Pic}_S$. By \Cref{Irregularity} and \Cref{Betti}, \[
    h^1(S,\mathcal{O}_S) = 1+g(Y) = b_1(S)/2.\]
    Therefore, the group scheme $\underline{\Pic}_S$ is reduced (see e.g.\ \cite[Section 3.3]{Liedtke}).
\end{proof}

Since we are interested in $G$-varieties, when $G$ is a finite group scheme, let us introduce an equivariant version of the Picard group: for a $G$-variety $X$ and $\pi\colon L \rightarrow X$ a line bundle over $X$, a \emph{$G$-linearization} of $L$ is a $G$-action on $L$ such that $\pi$ is $G$-equivariant and which commutes with the $\Gm$-action by multiplication on the fibers. The tensor product of two $G$-linearized line bundles is also $G$-linearized, and the analogous property holds for the dual; thus we can define the following.

\begin{definition}
\label{eq_Pic}
The \emph{equivariant Picard group} $\Pic_G(X)$ of a $G$-variety $X$ is the abelian group of $G$-linearized line bundles up to isomorphism; it comes with a homomorphism
\begin{align}
    \label{forget_lin}
    \phi \colon \Pic_G(X) \longrightarrow \Pic(X)
\end{align}
which forgets the linearization.
\end{definition}

\begin{lemma}
    \label{ker_finite}
    Assume $G$ is finite (not necessarily diagonalizable).
    Then the kernel of $\phi$ is of $\vert G \vert$-torsion.
\end{lemma}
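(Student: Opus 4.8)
The plan is to describe $\ker\phi$ explicitly as the group of linearizations of the \emph{trivial} line bundle and then identify it with the character group $X^*(G)$. An element of $\ker\phi$ is a $G$-linearized line bundle whose underlying bundle is isomorphic to $\mathcal{O}_X$; fixing such a trivialization, the linearization becomes an isomorphism $a^*\mathcal{O}_X \xrightarrow{\sim} \mathrm{pr}_2^*\mathcal{O}_X$ of sheaves on $G\times X$, where $a$ is the action and $\mathrm{pr}_2$ the projection. Such an isomorphism is multiplication by a unit $u\in \mathcal{O}(G\times X)^*$, and the associativity condition for a linearization reads $(m\times\id)^*u=(\id\times a)^*u\cdot \mathrm{pr}_{23}^*u$ on $G\times G\times X$, i.e.\ $u(gh,x)=u(g,hx)\,u(h,x)$ functorially, where $m$ is the multiplication of $G$. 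Two such data define isomorphic linearized bundles exactly when they differ by the coboundary attached to some $v\in\mathcal{O}(X)^*$. Thus $\ker\phi$ is the first cohomology of $G$ acting on $\mathcal{O}_X^*$, that is, cocycles modulo coboundaries.

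Next I would use that $X$ is a projective integral variety over the algebraically closed field $k$, so $H^0(X,\mathcal{O}_X)=k$ and hence $\mathcal{O}(X)^*=k^*$. Since $G$ is finite, the projection $G\times X\to X$ is affine with $\mathrm{pr}_{X,*}\mathcal{O}_{G\times X}=\mathcal{O}_X\otimes_k\mathcal{O}(G)$, whence $\mathcal{O}(G\times X)=\mathcal{O}(G)$ and $\mathcal{O}(G\times X)^*=\mathcal{O}(G)^*$. Consequently every cocycle $u$ is pulled back from a unique unit $u_0\in\mathcal{O}(G)^*$ independent of $X$, and the cocycle condition becomes $\Delta(u_0)=u_0\otimes u_0$ in the Hopf algebra $\mathcal{O}(G)$; that is, $u_0$ is a grouplike element, equivalently a character $G\to\Gm$. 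Moreover, coboundaries come from constants $v\in k^*$ and are therefore trivial. This yields a group isomorphism $\ker\phi\cong X^*(G)$.

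Finally I would show that $X^*(G)$ is killed by $n\defeq |G|$. Any character $\chi\colon G\to\Gm$ factors through its scheme-theoretic image, a finite subgroup scheme of $\Gm$; over $k$ these are exactly the groups $\boldsymbol{\mu}_d$, and since $\chi$ surjects onto $\boldsymbol{\mu}_d$ with kernel $\ker\chi$ we have $d=|G|/|\ker\chi|$, so $d\mid n$. As $\boldsymbol{\mu}_d$ is annihilated by the $d$-th power map, $\chi^d=1$, hence $\chi^n=(\chi^d)^{n/d}=1$. Therefore $n\cdot X^*(G)=0$ and $\ker\phi$ is of $|G|$-torsion, as claimed.

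The main obstacle is the middle step, namely pinning down $\ker\phi$ precisely: one must verify that linearizations of $\mathcal{O}_X$ are governed solely by grouplike units of $\mathcal{O}(G)$, so that no extra classes survive, which rests on the computation $\mathcal{O}(G\times X)^*=\mathcal{O}(G)^*$ and hence on the projectivity and integrality of $X$. If one prefers to avoid this hypothesis, the same conclusion follows from the classical transfer argument: the projection $\mathrm{pr}_2\colon G\times X\to X$ is finite locally free of degree $n$, so it carries a norm $N\colon \mathcal{O}(G\times X)^*\to\mathcal{O}(X)^*$; applying $N$ to the cocycle relation and using translation-invariance of the norm shows that $u^{\,n}$ is the coboundary of $N(u)$, giving again $n\cdot[u]=0$. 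In that version the delicate point is making the ``product over $G$'' rigorous for a group scheme without rational points, which is exactly what the norm map and its base-change compatibility provide.
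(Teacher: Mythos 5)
Your proof is correct, and its key step differs from the paper's. Both arguments begin the same way: since $\mathcal{O}(X)^\ast = k^\ast$, the kernel of $\phi$ is identified with the character group $X^*(G)$ (the paper phrases this as linearizations of $X\times \AA^1$ ``Zariski-locally'' of the form $g\cdot(x,z)=(g\cdot x, f(g)z)$; your cocycle computation via $\mathcal{O}(G\times X)^\ast=\mathcal{O}(G)^\ast$ and grouplike elements makes this identification global and rigorous, which is a genuine improvement in precision). Where you diverge is in proving that $X^*(G)$ is killed by $|G|$. The paper argues by d\'evissage along the connected-\'etale sequence $1\to G^0\to G\to \pi_0(G)\to 1$: the restriction of a character $f$ to $G^0$ is a unit $1+h$ in the local ring $\mathcal{O}(G^0)$ with $h$ in the maximal ideal, the nilpotency bound $h^{|G^0|}=0$ gives $f^{|G^0|}\in X^*(\pi_0(G))$, and characters of the constant group $\pi_0(G)$ are $|\pi_0(G)|$-torsion. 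You instead factor a character $\chi\colon G\to \Gm$ through its scheme-theoretic image $\boldsymbol{\mu}_d\subset\Gm$, use multiplicativity of orders in $1\to\ker\chi\to G\to\boldsymbol{\mu}_d\to 1$ to get $d\mid |G|$, and conclude from $\chi^d=1$. Your route is more self-contained: it avoids the paper's slightly delicate exponent claim for the maximal ideal of $\mathcal{O}(G^0)$, and it yields the sharper statement that the order of each character divides $|G|$; the paper's d\'evissage, on the other hand, is consistent with how the connected-\'etale sequence is exploited elsewhere in the paper (e.g.\ in the Betti number computations). Your closing norm-map variant is a third, still more general argument — it dispenses with projectivity and integrality of $X$ entirely — though, as you note, it requires setting up the norm along the finite locally free projection carefully, which neither the paper nor your main argument needs.
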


\begin{proof}
The kernel of $\phi$ can be seen as all the $G$-linearizations of the trivial line bundle on $X$ up to isomorphism. Since all regular functions on $X$ are constant, such linearizations are Zariski-locally all of the form
\begin{align}
\label{action}
G \curvearrowright X \times \AA^1, \quad g \cdot (x,z) = (g\cdot x, f(g)z)
\end{align}
where $f$ is a character of $G$. Now, the group $G$ is obtained as an extension of $\pi_0(G)$ by $G^0$, as in (\ref{etale_connected}). Since $G^0$ is infinitesimal, the ring $\mathcal{O}(G^0)$ is local and we can write its invertible elements as  $k^\times+\mathfrak{m}$, where $\mathfrak{m}$ is the maximal ideal, satisfying
\[
h^{\vert G^0 \vert} = 0 \quad \text{for all } h \in \mathfrak{m}.
\]
This yields that $f^{\vert G^0\vert} \in X^*(\pi_0(G))$. 
Next, the characters of $\pi_0(G)$ form a finite abelian group of $\vert \pi_0(G) \vert$-torsion, and hence we can conclude that
\[
f^{\vert G \vert} = (f^{\vert G^0\vert })^{\vert \pi_0(G)\vert} = 1
\]
and we are done.
\end{proof}

\begin{lemma}
Let $\rho \colon T \longrightarrow W:= T/G$ be either a $G$-torsor, or assume that $W$ is a smooth curve. If $G$ acts generically freely on $T$, then the kernel and cokernel of 
\[
\rho^* \colon \Pic(W) \longrightarrow \Pic(T)
\]
are of $\vert G \vert$-torsion.
\end{lemma}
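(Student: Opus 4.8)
The plan is to reduce everything to the \emph{norm homomorphism} attached to $\rho$. First I would check that in both cases $\rho$ is finite, flat and of constant degree $|G|$. For a $G$-torsor this is immediate. When $W$ is a smooth curve the source $T$ has dimension one, hence is an integral (Cohen--Macaulay) curve, so that $\rho$ -- finite, with regular target and zero-dimensional fibres -- is flat by miracle flatness; generic freeness of the action then forces the generic, hence constant, degree to be $|G|$. Being finite locally free of degree $|G|$, the morphism $\rho$ carries a norm map $N_\rho\colon \Pic(T)\to\Pic(W)$ satisfying $N_\rho\circ\rho^\ast=[\,|G|\,]$ (multiplication by $|G|$) and $N_\rho(\mathcal{O}_T)=\mathcal{O}_W$.

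The kernel is then handled at once: if $M\in\Pic(W)$ satisfies $\rho^\ast M\cong\mathcal{O}_T$, then $M^{\otimes|G|}=N_\rho(\rho^\ast M)=N_\rho(\mathcal{O}_T)=\mathcal{O}_W$, so $\ker\rho^\ast$ is $|G|$-torsion. This part is robust and uses nothing beyond the norm.

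For the cokernel the goal is to prove $L^{\otimes|G|}\in\operatorname{im}\rho^\ast$ for every $L\in\Pic(T)$. Computing $\rho^\ast N_\rho(L)$ by flat base change along $\rho$, and using the torsor relation $T\times_W T\cong G\times T$ (respectively its restriction over the free locus $X_{\mathrm{fr}}$ in the curve case, controlling the finitely many branch points), one gets the \emph{orbit product} $\rho^\ast N_\rho(L)\cong\bigotimes_{g\in G} g^\ast L$, which by construction lies in $\operatorname{im}\rho^\ast$. Hence it suffices to compare the orbit product with $L^{\otimes|G|}$, that is, to show that $G$ acts trivially on $\underline{\Pic}_T$ up to $|G|$-torsion, so that $g^\ast L\cong L$ and the orbit product is exactly $L^{\otimes|G|}$.

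I expect this last point -- the triviality of the induced $G$-action on the full Picard scheme -- to be the crux, everything else being formal. The natural way to organize it is a dévissage along the connected--étale sequence \eqref{etale_connected}: factor $\rho$ through the quotients by $G^0$ and by $\pi_0(G)$ and multiply the two torsion bounds, which combine to $|G^0|\cdot|\pi_0(G)|=|G|$. On the discrete Néron--Severi group the infinitesimal part $G^0$ acts trivially by connectedness, while the constant part $\pi_0(G)$ preserves the intersection form; for a curve $\mathrm{NS}=\mathbf{Z}$ via the degree and $\rho^\ast$ is multiplication by $|G|$, giving cokernel $\mathbf{Z}/|G|$. The delicate contribution is $\underline{\Pic}^0_T$: its abelian-variety part receives the trivial action (the action is induced by translations, and an abelian variety admits no nontrivial homomorphism into the affine automorphism group of $\underline{\Pic}^0_T$), whereas the non-reduced, unipotent part that appears precisely when $T$ is singular (as for the $G$-normal curves of \Cref{lci}, cf.\ \Cref{Liu_unipotent}) is $p$-power torsion and gets absorbed into the $|G^0|$-torsion factor. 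Verifying that this induced action is trivial on the whole of $\underline{\Pic}_T$, including its non-reduced and affine components, is the main obstacle; once it is in place, $\rho^\ast N_\rho=N_\rho\rho^\ast=[\,|G|\,]$ and both kernel and cokernel are $|G|$-torsion.
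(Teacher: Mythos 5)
Your proof of the kernel statement is correct and complete, but it is not the paper's argument: the paper handles the kernel by factoring $\rho^*$ through the equivariant Picard group $\Pic_G(T)$ of \Cref{eq_Pic}, proving injectivity of $\rho_G^*$ with the projection formula and then invoking \Cref{ker_finite}; it reserves the norm homomorphism for the cokernel. Your use of the norm, $N_\rho\circ\rho^*=[\,|G|\,]$ together with $N_\rho(\mathcal{O}_T)=\mathcal{O}_W$, is the cleaner route to the kernel bound, and your flatness justification (miracle flatness over the smooth curve $W$, constancy of the degree read off from the free locus) is sound.

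The cokernel half of your proposal has a genuine gap, located exactly at the step you single out as the crux, and the gap cannot be filled. You need the induced $G$-action on $\Pic(T)$ to be trivial modulo $|G|$-torsion, and your justification of the abelian-variety part (``the action is induced by translations'') has no basis: in this lemma $G$ acts on $T$ by arbitrary automorphisms. Take $T$ an elliptic curve and $G=\Z/2\Z$ acting by $\sigma\colon x\mapsto -x$, so that $W=T/G=\proj^1$ and the action is generically free; this is precisely the example of \Cref{remark picg}. For $L=\mathcal{O}_T(x)$ one has $\sigma^*L\otimes L^{-1}\cong \mathcal{O}_T((-x)-x)$, whose class under the isomorphism $\Pic^0(T)\cong T(k)$ is $-2x$; this is $2$-torsion only when $x\in T[4]$, so the triviality you need fails. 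Worse, the conclusion itself fails in this example: the image of $\rho^*$ is $\Z\cdot\mathcal{O}_T(2\cdot 0_T)$, and $\mathcal{O}_T(2x)\cong\mathcal{O}_T(2\cdot 0_T)$ would force $2x=0_T$, so for every $x\notin T[2]$ the class of $\mathcal{O}_T(x)$ in the cokernel is not killed by $|G|=2$; choosing $x$ of infinite order (possible over any algebraically closed field strictly larger than $\overline{\mathbf{F}}_p$), no positive power of $\mathcal{O}_T(x)$ is a pullback at all, so the cokernel is not even a torsion group.

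For completeness of the comparison: the paper fares no better on this half. Its proof of the cokernel assertion is the single claim that $N_\rho(\rho^*L)\cong L^{\otimes|G|}$ implies it, which is a non sequitur --- that identity bounds the kernel of $\rho^*$ (precisely as you use it) and the cokernel of $N_\rho$, not the cokernel of $\rho^*$ --- and the example above shows that no argument can establish the cokernel statement as written. (A free $\Z/n\Z$-action on a non-hyperelliptic curve, with $L=\mathcal{O}(x)$ for a general point $x$, indicates that the torsor case is equally problematic.) So your instinct that the cokernel requires an extra, unproved input was exactly right; what actually stands, from either your argument or the paper's, is the kernel bound together with the fact that the cokernel of the norm $N_\rho$ is of $|G|$-torsion.
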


\begin{proof}
    In both assumptions, the $\mathcal{O}_W$-module $\rho_*\mathcal{O}_T$ is finite and locally free of rank $\vert G \vert$. This guarantees the existence of a canonical \emph{norm} homomorphism of abelian groups
    \[
    N_\rho \colon \Pic(T) \longrightarrow \Pic(W)
    \]
    such that $N_\rho (\rho^*(L)) \simeq L^{\otimes \vert G \vert}$
    for all invertible $\mathcal{O}_W$-modules. From this we can conclude that the cokernel of $\rho^*$ is of $\vert G \vert$-torsion.
    
    Moving on to the kernel, let us consider the group $\Pic_G(T)$ defined in \Cref{eq_Pic}. 
    Next we see that $\rho^*$ factorizes through $\Pic_G(T)$ and we get the following commutative diagram
    \[
    \begin{tikzcd}
        \Pic(W) \arrow[rr,"\rho^*"]\arrow[rd,"\rho^*_G",swap] && \Pic(T)\\
        & \Pic_G(T)\arrow[ru,"\phi",swap] &.
    \end{tikzcd}
    \]
    By \Cref{ker_finite}, it suffices to see that the induced morphism $\rho_G^*\colon \Pic(W)\to \Pic_G(T)$ is injective.  Let $L$ be a line bundle over $W$ such that $\rho_G^*(L)=\mathcal{O}_T$ equipped with the trivial action of $G$. Then $\rho^*(L)=\mathcal{O}_T$ and applying $\rho_*$, we obtain by the projection formula
    \[
     \rho_*(\mathcal{O}_T)= \rho_* \rho^*(L) =L \otimes \rho_*(\mathcal{O}_T),
    \]
    as $G$-linearized sheaves. Taking the $G$-invariants, we get that $L=\mathcal{O}_W$. 
\end{proof}

\begin{remark}
\label{remark picg}
Notice that, in the case where $\rho$ is a $G$-torsor, the map $\rho_G^\ast$ is an isomorphism. This is because a $G$-linearization of a line bundle on $T$ is exactly a descent datum for the finite and faithfully flat map $\rho \colon T \to T/G$. However, in general $\rho_G^\ast$ is not surjective, even when the target $W$ is a smooth curve. To see this, let $T$ be an elliptic curve and $G = \Z/2\Z$ acting by multiplication by $\pm 1$ on $T$; then the quotient map $\rho$ is a ramified covering of the projective line $W = \proj^1$. In this example, the invertible sheaf $\mathcal{O}_T(t)$, where $t \in T[2]$ is a ramification point, is $G$-linearized but is not in the image of $\rho^\ast$.
\end{remark}

    \begin{remark}
    More precisely, we have just showed that for a line bundle $L$ on $W$ such that $\rho^*(L) = \mathcal{O}_T$, we have a $G$-invariant section 
    \[
    s \colon \mathcal{O}_T \stackrel{\sim}{\longrightarrow} \rho^*(L)^{\otimes \vert G \vert}.
    \]
    Taking the pushforward by $\rho$ and taking invariants on both sides yields a section
    \[
    s^\prime = \rho_*^G s \colon \mathcal{O}_W \stackrel{\sim}{\longrightarrow} L^{\otimes \vert G \vert}
    \]
    trivialising the $\vert G \vert$-th power of $L$ as wanted.
    \end{remark}

\begin{corollary}
The kernel and cokernel of 
\[
q^* \colon \Pic (S) \longrightarrow \Pic(E \times X) \quad \text{and} \quad \pi^* \colon \Pic(Y) \longrightarrow \Pic(X)
\]
are of $\vert G \vert$-torsion.
\end{corollary}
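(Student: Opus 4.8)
The plan is to deduce this corollary directly from the preceding lemma, whose two alternative hypotheses cover exactly the two situations at hand. The only work is to match each of the two morphisms $q$ and $\pi$ to one of the two allowed alternatives (either the morphism is a $G$-torsor, or its target is a smooth curve), and in both cases to check that $G$ acts generically freely. I would treat the two maps in turn.

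For $q^* \colon \Pic(S) \to \Pic(E \times X)$, I would take $T = E \times X$ and $W = S = (E \times X)/G$, so that $\rho = q$ and $\rho^* = q^*$. By \Cref{diagram}, the morphism $q$ is a $G$-torsor; in particular the $G$-action on $E \times X$ is free, hence a fortiori generically free. Thus the first alternative of the preceding lemma applies verbatim, and the kernel and cokernel of $q^*$ are of $\vert G\vert$-torsion.

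For $\pi^* \colon \Pic(Y) \to \Pic(X)$, I would take $T = X$ and $W = Y = X/G$, so that $\rho = \pi$. Here $Y$ is a smooth projective curve: it is the quotient $X/G$, which is normal by \Cref{normalization}, hence smooth since it is one-dimensional. Moreover the $G$-action on $X$ is generically free, as $X_{\text{fr}}$ is nonempty by \Cref{diagram}. Thus the second alternative of the preceding lemma (target a smooth curve, together with generic freeness) applies, and the kernel and cokernel of $\pi^*$ are of $\vert G\vert$-torsion.

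No genuine obstacle remains at this stage. All of the real content was already carried out in the preceding lemma: the construction of the norm homomorphism $N_\rho$, which bounds the cokernel, and the factorization of $\rho^*$ through the equivariant Picard group $\Pic_G(T)$ together with \Cref{ker_finite}, which bounds the kernel. The present corollary is merely the instantiation of that lemma to $\rho = q$ and to $\rho = \pi$, so the only point to verify is the compatibility of the hypotheses, which is immediate from \Cref{diagram} and \Cref{normalization}.
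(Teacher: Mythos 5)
Your proposal is correct and matches the paper's intent: the corollary is stated in the paper without proof precisely because it is the immediate instantiation of the preceding lemma, with $\rho = q$ falling under the $G$-torsor alternative (via \Cref{diagram}) and $\rho = \pi$ under the smooth-curve alternative (smoothness of $Y = X/G$ following from \Cref{normalization} plus one-dimensionality, and generic freeness from \Cref{diagram}). Your verification of the hypotheses in both cases is exactly the argument the paper leaves implicit.
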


Let us mention the following result, which seems to be a folklore statement known by experts but for which the authors could not find any explicit reference in the literature.

\begin{lemma}
        The functor $\underline{\Pic}^0$ is multiplicative on projective varieties. More precisely, let $Z$ and $W$ be projective varieties, then $\underline{\Pic}_{Z \times W}^0 = \underline{\Pic}_Z^0\times \underline{\Pic}_W^0$.
\end{lemma}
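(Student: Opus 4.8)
The plan is to exhibit the natural comparison morphism and to prove it is an isomorphism by showing it is simultaneously a monomorphism and an epimorphism of fppf sheaves of abelian groups. Write $p_Z,p_W$ for the two projections out of $Z\times W$, and fix rational points $z_0\in Z(k)$, $w_0\in W(k)$, which exist because $k=\kbar$. Pullback along a morphism induces a homomorphism of Picard schemes carrying the neutral component into the neutral component, so $(L,M)\mapsto p_Z^*L\otimes p_W^*M$ defines a homomorphism of group schemes
\[
\Phi\colon \underline{\Pic}^0_Z\times \underline{\Pic}^0_W \longrightarrow \underline{\Pic}^0_{Z\times W}.
\]
First I would check that $\Phi$ is a monomorphism. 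Restriction along the closed immersions $i_{w_0}\colon Z\to Z\times W$, $z\mapsto(z,w_0)$, and $i_{z_0}\colon W\to Z\times W$ yields a homomorphism $r\defeq(i_{w_0}^*,i_{z_0}^*)$. Since $p_Z\circ i_{w_0}=\id_Z$ while $p_W\circ i_{w_0}$ factors through $\Spec k$ (hence pulls every bundle back to a trivial one), one gets $r\circ\Phi=\id$; thus $\Phi$ admits a retraction and is a monomorphism, injective on $T$-points for every $k$-scheme $T$.

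It remains to show that $\Phi$ is an epimorphism, after which the combination mono $+$ epi yields the desired isomorphism in the abelian category of fppf abelian sheaves, and hence an isomorphism of the representing group schemes. Let $T$ be a $k$-scheme and $\xi\in\underline{\Pic}^0_{Z\times W}(T)$; after replacing $T$ by an fppf cover we may assume $\xi$ is represented by a line bundle $\mathcal N$ on $Z\times W\times T$, fibrewise lying in $\Pic^0$. Set $\ell\defeq[i_{w_0}^*\mathcal N]\in\underline{\Pic}^0_Z(T)$ and $m\defeq[i_{z_0}^*\mathcal N]\in\underline{\Pic}^0_W(T)$, and consider
\[
\mathcal N'\defeq \mathcal N\otimes (p_{ZT}^*i_{w_0}^*\mathcal N)^{-1}\otimes (p_{WT}^*i_{z_0}^*\mathcal N)^{-1},
\]
where $p_{ZT},p_{WT}$ are the projections to $Z\times T$ and $W\times T$. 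After one further twist by a line bundle pulled back from $T$ (which does not alter the class in $\underline{\Pic}_{Z\times W}(T)$) the bundle $\mathcal N'$ becomes trivial on $Z\times\{w_0\}\times T$ and on $\{z_0\}\times W\times T$. The goal is to prove $\mathcal N'$ is trivial modulo $\Pic(T)$, which gives $\xi=\Phi(\ell,m)$. For this I would apply the seesaw principle to $h\colon Z\times W\times T\to W\times T$, whose fibres are base changes of $Z$, a proper geometrically integral scheme with $H^0(\mathcal O_Z)=k$: if $\mathcal N'$ is trivial on every fibre $Z\times\{s\}$, then $h_*\mathcal N'$ is invertible, its formation commutes with base change, and the evaluation map $h^*h_*\mathcal N'\to\mathcal N'$ is an isomorphism; restricting the resulting identity $\mathcal N'\cong h^*\mathcal R$ to the section $\{z_0\}\times W\times T$, on which $\mathcal N'$ is trivial, forces $\mathcal R\cong\mathcal O_{W\times T}$ and hence $\mathcal N'\cong\mathcal O$.

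The crux, and the step I expect to be the main obstacle, is the fibrewise input needed to run the seesaw: over each point $s$ the bundle $\mathcal N'_s$ lies in $\Pic^0$ and is trivial on both axes, and one must deduce that $\mathcal N'_s$ is itself trivial. This is precisely the content of the \emph{theorem of the cube}. Concretely, over an algebraically closed field, if $P\in\Pic^0(Z\times W)$ restricts trivially to $Z\times\{w_0\}$ and to $\{z_0\}\times W$, then $P$ is trivial: realising $P$ as the fibre over a point $b_1$ of a line bundle $\mathcal Q$ on $Z\times W\times B$ whose fibre over $b_0$ is trivial (algebraic equivalence on a connected pointed $B$), one normalises $\mathcal Q$ so that it is trivial on $Z\times\{w_0\}\times B$, on $\{z_0\}\times W\times B$ and on $Z\times W\times\{b_0\}$ without changing its fibre over $b_1$, and the theorem of the cube (with the two complete factors $Z,W$ and the connected third factor $B$; see e.g.\ \cite{Kleiman}) then forces $\mathcal Q$, hence $P$, to be trivial. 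This is exactly what kills the ``correspondence'' part of $\Pic(Z\times W)$ inside the neutral component and supplies the fibrewise triviality required above. The delicate points I would watch are that $Z$ and $W$ are allowed to be singular, so their Picard schemes may carry a nontrivial linear part, a situation the cube nonetheless accommodates for integral complete varieties; and the bookkeeping needed to perform the two seesaw normalisations fppf-locally over $T$ and over the varying residue fields of $W\times T$.
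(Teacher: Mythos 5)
Your proof is correct, but after the shared first step it follows a genuinely different route from the paper's. Both arguments use the same comparison homomorphism $\Phi$ (the paper's $\alpha$) and the same retraction given by restriction to the two axes $Z\times\{w_0\}$ and $\{z_0\}\times W$ (the paper's $\beta$), so the monomorphism step coincides; the divergence is in how surjectivity onto the neutral component is obtained. The paper never returns to individual line bundles: the retraction splits $\underline{\Pic}_{Z\times W}$ as $\mathrm{im}(\alpha)\times\ker(\beta)$, the K\"unneth formula $H^1(\mathcal{O}_{Z\times W})=H^1(\mathcal{O}_Z)\oplus H^1(\mathcal{O}_W)$ shows that $d\alpha$ is an isomorphism on Lie algebras, hence $\ker(\beta)$ has trivial Lie algebra, is therefore a constant (\'etale) group, and has trivial neutral component. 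This purely infinitesimal argument takes a few lines, is insensitive to the possible non-reducedness of the Picard schemes in characteristic $p$, and yields the extra structural fact that the ``correspondence part'' $\ker(\beta)$ of the whole Picard scheme is \'etale. You instead prove that $\Phi$ is an epimorphism of fppf sheaves by hand: you normalize an fppf-local representative by its two axis restrictions and trivialize the result by seesaw along $Z\times W\times T\to W\times T$, the fibrewise input being the theorem of the cube, i.e.\ that a class in $\Pic^0$ of a product of complete varieties over an algebraically closed field which is trivial on both axes is trivial. This is the classical line-bundle proof that the Picard variety of a product is the product of the Picard varieties, and your handling of the delicate points is sound: the cube does hold for singular (integral) complete varieties, and the interpolating bundle on $Z\times W\times B$ exists by Kleiman's representability results because $k=\kbar$ supplies rational points. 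The trade-off: your route is explicit on $T$-points but invokes the cube as a substantial black box (better cited from \cite{mumford1974abelian} than from \cite{Kleiman}) and requires bookkeeping over fppf covers and geometric fibres, including the limit argument reducing to Noetherian $T$; the paper's route needs only K\"unneth plus the fact that a $k$-group scheme with trivial Lie algebra is \'etale. One point you should make explicit in your cube step: the connected parameter scheme $B=\underline{\Pic}^0_{Z\times W}$ may be non-reduced in characteristic $p$, so either quote the cube for a connected, locally Noetherian (not necessarily reduced) third factor, or replace $B$ by $B_{\mathrm{red}}$, which is still connected and contains both the identity and the point classifying $P$.
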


\begin{proof}
    Let us fix $z \in Z(k)$ and $w \in W(k)$; this gives the following two natural maps between the Picard schemes
    \[
    \alpha \colon \underline{\Pic}_Z \times \underline{\Pic}_W \longrightarrow \underline{\Pic}_{Z \times W}, \quad (L,M) \longmapsto pr_Z^*L \otimes pr_W^* M
    \]
    where $pr_Z$ and $pr_W$ denote the two projections, and
    \[
    \beta \colon \underline{\Pic}_{Z \times W} \longrightarrow \underline{\Pic}_Z \times \underline{\Pic}_W, \quad N \longmapsto (N_{\vert Z \times \{w\}}, N_{\vert \{z\} \times W}).
    \]
    By construction, the composite map $\beta \circ \alpha$ is the identity, so in particular $\ker(\alpha)$ is trivial. Moreover, the induced map of $\alpha$ on the Lie algebras is 
    \[
    d\alpha = H^1(Z,\mathcal{O}_Z) \oplus H^1(W,\mathcal{O}_W) \longrightarrow H^1(Z\times W, \mathcal{O}_{Z \times W}).
    \]
    Since both varieties are projective by assumption, it follows by the Künneth formula that $d\alpha$ is an isomorphism. The injectivity of $\alpha$ implies that $\underline{\Pic}_{Z\times W}$ is the product of $\mathrm{im}(\alpha)$ and of $\ker (\beta)$. Since $d\alpha$ is an isomorphism, we have moreover that $\ker(\beta)$ is a constant group, so its neutral component is trivial. In conclusion, $\alpha$ induces an isomorphism between the neutral components which are given by the functors $\underline{\Pic}^0$, as wanted.
\end{proof}

\begin{corollary}
\label{rem:picmult}
 The equality \[
    \underline{\Pic}^0_{E\times X} = E \times \underline{\Pic}^0_X\] holds. Moreover, by \Cref{Liu_unipotent}, the right hand side is the extension of an abelian variety by a unipotent group.
\end{corollary}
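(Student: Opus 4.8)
The plan is to obtain the displayed equality directly from the multiplicativity of $\underline{\Pic}^0$ established in the preceding lemma, and then to read off the structure of the product from \Cref{Liu_unipotent}. First I would apply that lemma to the pair $Z = E$ and $W = X$: both are projective varieties, so it yields $\underline{\Pic}^0_{E\times X} = \underline{\Pic}^0_E \times \underline{\Pic}^0_X$. Since $E$ is an elliptic curve it is canonically isomorphic to its own dual, i.e.\ $\underline{\Pic}^0_E = E$; substituting gives $\underline{\Pic}^0_{E\times X} = E \times \underline{\Pic}^0_X$, as claimed.

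For the structural assertion, let $\tau \colon \widetilde X \to X$ denote the normalization. By \Cref{normalization} the normalization of the $G$-normal curve $X$ is bijective and purely inseparable, so \Cref{Liu_unipotent} applies and produces a short exact sequence
\[
0 \longrightarrow U \longrightarrow \underline{\Pic}^0_X \stackrel{\tau^*}{\longrightarrow} \underline{\Pic}^0_{\widetilde X} \longrightarrow 0,
\]
in which $U$ is unipotent and $\underline{\Pic}^0_{\widetilde X}$ is the Jacobian of the smooth projective curve $\widetilde X$, hence an abelian variety. Taking the product with $E$ is exact and leaves the kernel unchanged, so I would obtain
\[
0 \longrightarrow U \longrightarrow E \times \underline{\Pic}^0_X \longrightarrow E \times \underline{\Pic}^0_{\widetilde X} \longrightarrow 0.
\]
As a product of abelian varieties is again an abelian variety, $E \times \underline{\Pic}^0_{\widetilde X}$ is abelian, which exhibits $E \times \underline{\Pic}^0_X$ as an extension of an abelian variety by the unipotent group $U$.

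The only point requiring care is the applicability of \Cref{Liu_unipotent}: it is phrased for $G$ infinitesimal, whereas here $G$ is merely diagonalizable. However, its proof uses only that the normalization $\tau$ is a bijection, which holds for every $G$-normal curve by \Cref{normalization}; thus no extra hypothesis on $G$ is needed. Beyond this, the argument is a formal manipulation of short exact sequences of commutative algebraic groups together with the self-duality of $E$, so I do not anticipate any genuine obstacle.
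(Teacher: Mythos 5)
Your proof is correct and follows essentially the same route as the paper: the corollary is a direct consequence of the preceding multiplicativity lemma applied to $Z=E$, $W=X$, together with the self-duality $\underline{\Pic}^0_E = E$ and \Cref{Liu_unipotent} for the extension structure. Your added observation that \Cref{Liu_unipotent} applies beyond the infinitesimal case—since its proof only needs bijectivity of the normalization, guaranteed by \Cref{normalization} for any $G$-normal curve—is a welcome clarification of a hypothesis mismatch the paper glosses over.
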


In order to compute more explicitly the Picard rank of $S$, we make use of the notion of Albanese variety.

\begin{definition}
    Let $Z$ be a projective variety and let us fix a base point $z_0 \in Z(k)$. Its \emph{Albanese variety} is the abelian variety $\mathrm{Alb}(Z)$, equipped with a morphism
    \[
    \mathrm{alb}_Z \colon Z \longrightarrow \mathrm{Alb}(Z), \quad z_0 \longmapsto 0
    \]
    satisfying the following universal property. For any abelian variety $A$ and any morphism $g \colon Z \rightarrow A$ sending $z_0$ to the neutral element of $A$, there is a unique factorisation as
    \begin{center}
        \begin{tikzcd}
            Z \arrow[d, "g"]  \arrow[rr, "\mathrm{alb}_Z" ]&& \mathrm{Alb}(Z)\arrow[dll, "\tilde{g}"]\\
            A && 
        \end{tikzcd}
    \end{center}
    where $\tilde{g}$ is a group homomorphism.
\end{definition}



\begin{proposition}\label{picardrank}
    The Néron-Severi groups of $S$ and of $E \times X$ have same rank, equal to
    \[
    \rho(E \times X) = 2+\rank \Hom_{gp} (\mathrm{Alb}(X),E)
    \]
    For instance, if $X$ is rational then $S$ has Picard rank two, and the two contraction morphisms $f,h$ are respectively with targets $Y= \proj^1$ and $E/G$.
\end{proposition}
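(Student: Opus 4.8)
The plan is to prove the two ranks are equal by exploiting the torsor $q\colon E\times X\to S$, and then to compute $\rho(E\times X)$ by means of a product decomposition of the Néron--Severi group. First I would observe that, by functoriality of the Picard scheme, $q^{\ast}\colon\underline{\Pic}_S\to\underline{\Pic}_{E\times X}$ is a homomorphism of group schemes and hence carries $\Pic^0(S)$ into $\Pic^0(E\times X)$; the norm homomorphism $N_q$ constructed above, being induced by a morphism of Picard schemes, likewise preserves algebraic equivalence. Thus $q^{\ast}$ descends to a map $\bar q^{\ast}\colon\mathrm{NS}(S)\to\mathrm{NS}(E\times X)$. If $\bar q^{\ast}[L]=0$ then $q^{\ast}L\in\Pic^0(E\times X)$, so $L^{\otimes\vert G\vert}=N_q(q^{\ast}L)\in\Pic^0(S)$ and $[L]$ is $\vert G\vert$-torsion; conversely, by the preceding Corollary on the cokernel of $q^{\ast}$, every class of $\Pic(E\times X)$ becomes $\vert G\vert$-divisible inside the image of $q^{\ast}$, whence $\vert G\vert\cdot\mathrm{NS}(E\times X)\subseteq\operatorname{im}\bar q^{\ast}$. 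Since Néron--Severi groups are finitely generated, $\bar q^{\ast}$ has finite kernel and cokernel, and therefore $\rho(S)=\rho(E\times X)$.

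Next I would compute $\rho(E\times X)$. Fixing base points and restricting to the slices $E\times\{x_0\}$ and $\{e_0\}\times X$ splits the exact sequence $0\to\mathrm{Corr}(E,X)\to\Pic(E\times X)\to\Pic(E)\oplus\Pic(X)\to 0$ through $pr_E^{\ast}\oplus pr_X^{\ast}$, where $\mathrm{Corr}(E,X)$ is the group of divisorial correspondences, i.e.\ line bundles trivial on both slices. By the multiplicativity of $\underline{\Pic}^0$ established above (\Cref{rem:picmult}) one has $\Pic^0(E\times X)=pr_E^{\ast}\Pic^0(E)\oplus pr_X^{\ast}\Pic^0(X)$, and a nonzero correspondence cannot lie in $\Pic^0$, so passing to the quotient by $\Pic^0$ yields $\mathrm{NS}(E\times X)=\mathrm{NS}(E)\oplus\mathrm{NS}(X)\oplus\mathrm{Corr}(E,X)$. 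As $E$ and $X$ are integral projective curves, the degree identifies $\mathrm{NS}(E)\cong\mathrm{NS}(X)\cong\Z$. Finally, sending a correspondence $\mathcal{L}$ to the morphism $x\mapsto[\mathcal{L}|_{E\times\{x\}}]$ gives a morphism $X\to\underline{\Pic}^0_E$ vanishing at $x_0$, which by the universal property of the Albanese variety factors through a homomorphism $\mathrm{Alb}(X)\to\underline{\Pic}^0_E$; pulling back the Poincaré bundle gives the inverse construction, so $\mathrm{Corr}(E,X)\cong\Hom_{gp}(\mathrm{Alb}(X),\underline{\Pic}^0_E)$. Since $\underline{\Pic}^0_E\cong E$, counting ranks gives $\rho(E\times X)=2+\rank\Hom_{gp}(\mathrm{Alb}(X),E)$.

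For the last assertion, if $X$ is rational then its normalization is $\proj^1$, so every morphism from $X$ to an abelian variety is constant and $\mathrm{Alb}(X)=0$; hence $\Hom_{gp}(\mathrm{Alb}(X),E)=0$ and $\rho(S)=2$. Being relatively minimal (\Cref{lem:minimal}), $S$ carries no $(-1)$-curve, so a surface of Picard rank two has a cone of curves with exactly two extremal rays, both of fibre type. These are spanned by a fibre of $h$ (a copy of $\proj^1$, of self-intersection zero) and a fibre of $f$ (a copy of $E$, again of self-intersection zero), so the two contraction morphisms are precisely $h\colon S\to E/G$ and $f\colon S\to Y=\proj^1$.

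The step I expect to require the most care is the identification $\mathrm{Corr}(E,X)\cong\Hom_{gp}(\mathrm{Alb}(X),\underline{\Pic}^0_E)$, since the classical product formula is usually phrased for smooth varieties whereas here $X$ is singular. The point to verify is that restricting a correspondence to the slices $E\times\{x\}$ defines a morphism $X\to\underline{\Pic}^0_E$ valued in the neutral component; this is where I would use that $\underline{\Pic}^0_X$ is a genuine smooth group scheme and $\mathrm{Alb}(X)$ exists for projective curves (\Cref{Pic_curve}), together with the fact that any map from the proper variety $X$ to the abelian variety $\underline{\Pic}^0_E\cong E$ factors through $\mathrm{Alb}(X)$.
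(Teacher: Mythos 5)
Your proof is correct, and it follows the same global strategy as the paper (reduce to $E\times X$ via the $\vert G\vert$-torsion properties of $q^*$, decompose $\Pic(E\times X)$, and identify the interesting summand with $\Hom_{gp}(\mathrm{Alb}(X),E)$), but the middle steps are organized differently, and you make explicit two points the paper leaves implicit. For the equality $\rho(S)=\rho(E\times X)$, the paper relies silently on the corollary that the kernel and cokernel of $q^*$ are of $\vert G\vert$-torsion; you correctly note that this does not by itself control the kernel at the level of N\'eron--Severi groups, and you close the gap with the norm map: since $N_q$ is a morphism of Picard functors it preserves $\Pic^0$, so $q^*L\in\Pic^0(E\times X)$ forces $L^{\otimes\vert G\vert}=N_q(q^*L)\in\Pic^0(S)$. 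For the computation of $\rho(E\times X)$, the paper writes $\Pic(E\times X)=\Pic(X)\oplus\underline{\Pic}_E(X)$ and uses the explicit structure $\underline{\Pic}_E\simeq E\times\underline{\Z}$ to convert everything into $\mathrm{Hom}(X,E)\oplus\Z$, splitting off the constant maps by rigidity; you instead use the classical three-term decomposition $\Pic(E)\oplus\Pic(X)\oplus\mathrm{Corr}(E,X)$ given by restriction to the two slices, and identify $\mathrm{Corr}(E,X)$ with $\Hom_{gp}(\mathrm{Alb}(X),E)$ via the normalized Poincar\'e bundle. The content is the same (your correspondence group is exactly the paper's $\mathrm{Hom}(X,E;x_0\mapsto 0_E)$, and both routes rest on the multiplicativity of $\underline{\Pic}^0$ from \Cref{rem:picmult} and on the universal property of the Albanese variety); your route pays for its classical flavor by needing the Poincar\'e bundle and an injectivity check, while the paper's avoids these by manipulating the functor of points of $\underline{\Pic}_E$ directly. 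You also supply a cone-of-curves argument for the final assertion about the contractions $f$ and $h$, which the paper's proof does not address at all: with $\rho(S)=2$ and $S$ relatively minimal (\Cref{lem:minimal}), the two fibre classes are nef, of self-intersection zero and meet positively, so they span the two extremal rays. One small slip there: a rational $G$-normal curve $X$ need not be $\proj^1$ (it may have cusps, as in the quasi-hyperelliptic case), so a fibre of $h$ is a copy of $X$ rather than of $\proj^1$; this is harmless, since your argument only uses the numerical properties of the fibre classes.
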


\begin{proof}
    By \Cref{Pic_curve}, the Picard functor $\underline{\Pic}_E$ is representable. The projection $pr_X\colon E\times X\to X$ induces an inclusion 
    \[
    pr_X^*\colon \Pic(X)\longhookrightarrow \Pic(E\times X),\]
    which comes with a natural section given by the restriction over $\{0_E\}\times X$. This yields
    \begin{align}
        \label{splitting}
        \Pic(E \times X) = \Pic(X) \oplus \underline{\Pic}_E(X).
    \end{align}
    By \Cref{Pic_curve} again, together with the fact that $E$ is an elliptic curve, we have the following short exact sequence of group schemes:
    \[
    0 \longrightarrow \underline{\Pic}_E^0 \longrightarrow \underline{\Pic}_E \longrightarrow \mathrm{NS}(E)\longrightarrow 0,
    \]
    where the map from the Picard scheme of $E$ to the constant scheme $\mathrm{NS}(E)=\underline{\Z}$ is given by the degree. 
    The following morphism
    \[
    \underline{\Z} \longrightarrow \underline{\Pic}_E, \quad m \longmapsto \mathcal{O}_E(m\cdot 0_E)
    \]
    is a section of the degree map. Moreover, there is a natural isomorphism $E \simeq \underline{\Pic}^0_E$, sending a point $x \in E$ to $\mathcal{O}_E(x-0_E)$. Hence applying the Picard functor of $E$ to $X$ we get
    \[
        \Pic(E\times X) = \Pic(X) \oplus \underline{\Pic}_E(X) \simeq  \Pic(X) \oplus (E \times \underline{\Z}) (X) = \Pic(X) \oplus \mathrm{Hom}(X,E) \oplus \Z.
    \]
    Moreover, for every $x_0 \in X(k)$, recall that we have 
    \[
    \mathrm{Hom}(X,E) = E\oplus \mathrm{Hom}(X,E;x_0\mapsto 0_E),
    \]
    where $\mathrm{Hom}(X,E;x_0\mapsto 0_E)$ is the group of morphisms from $X$ to $E$ sending $x_0$ to the neutral element $0_E$. By the rigidity lemma of abelian varieties, the latter group equals $\mathrm{Hom}_{gp}(\mathrm{Alb}(X),E)$ which is of finite rank by \cite[IV.19, Theorem 3]{mumford1974abelian}. We conclude by taking the quotient by $\Pic^0$ and then taking the rank, on both sides of the equality.
\end{proof}

The next natural question arising from the study of the Picard group of $S$ would be to understand the relative Picard functor of $S/Y$. 
A general result by Deligne, illustrated in \cite[Remark 5.27]{Kleiman}, is that the relative Picard functor $\Pic^0_{S/Y}$ is represented by a scheme if the morphism $f$ is proper and flat, with geometric fibers which are connected and reduced. In our situation, these assumption are rarely satisfied because the elliptic fibrations often have nonreduced fibers. The results collected in \cite[Théorème 1.5.1]{RaynaudPicard} give an answer to the representability in more general situations. In our context, $S/Y$ is flat, projective and of finite presentation, and its fibers are all irreducible varieties: by the theorem of J.P. Murre, this implies that the relative Picard functor of $S/Y$ is an algebraic group over $k$, that is possibly non-reduced. However, its reduced structure can be easily described, as we show in the following lemma


\begin{lemma}
\label{pic_fibers}
   Let $y \in Y \setminus Y_{\text{fr}}$ be a branch point and let $f^{-1}(y) \subset S$ be the corresponding fiber. Then the Picard group $\Pic^0(f^{-1}(y))$, seen as an abstract group, is isomorphic to $E(k)$. 
\end{lemma}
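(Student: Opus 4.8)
The plan is to use the explicit description of the fiber obtained in the proof of \Cref{nowild}, reduce the computation of $\Pic^0$ to the reduced subscheme, and finally pass to abstract groups of $k$-points. Fix $x\in\pi^{-1}(y)$ and write $H\defeq H(x)$, a nontrivial cyclic group of order $n\defeq n(x)=\vert H\vert$, set $E'\defeq E/H$ (an elliptic curve), and let $\phi\colon E\to E'$ be the isogeny with kernel $H$. By \eqref{fiber_contracted}, the fiber is $W\defeq f^{-1}(y)=E\times^H X_z$, with reduced subscheme $W_{\text{red}}=E'$, where $X_z$ is the fiber over $z=\pi_H(x)$ of the cyclic cover $\pi_H\colon X\to X/H$ of \Cref{sec:local}. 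Since this cover has degree $n$ and is totally ramified at $x$, one has $X_z\simeq\Spec(k[t]/t^n)$ on which $H$ acts through a generator $\chi$ of $X^*(H)\simeq\Z/n\Z$, so that $t^j$ is a weight vector of weight $\chi^j$.

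First I would analyse the nilpotent thickening $E'\hookrightarrow W$. Writing $r\colon W\to E'$ for the bundle projection and $\mathcal{N}$ for the ideal of $E'$ in $W$, the weight decomposition $k[t]/t^n=\bigoplus_{j=0}^{n-1}k\,t^j$ descends along the $H$-torsor $\phi$ to give $r_*\mathcal{O}_W=\bigoplus_{j=0}^{n-1}L_{\chi^j}$ with $L_{\chi^j}\defeq E\times^H k_{\chi^j}$, and likewise $\mathcal{N}^j/\mathcal{N}^{j+1}\simeq L_{\chi^j}$. Each $L_{\chi^j}$ is a degree-zero line bundle on $E'$; since pulling back an associated bundle along the torsor $\phi$ trivialises it, we have $\phi^*L_{\chi^j}\simeq\mathcal{O}_E$, so $L_{\chi^j}\in\ker(\phi^*\colon\Pic(E')\to\Pic(E))$. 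This kernel is finite of order $n$, identified with $X^*(H)$ via $\chi^j\mapsto L_{\chi^j}$, so $L_{\chi^j}$ is nontrivial for $1\le j\le n-1$, whence $H^0(E',L_{\chi^j})=H^1(E',L_{\chi^j})=0$. Now consider the exact sequence of sheaves of abelian groups on the common space $\vert W\vert=\vert E'\vert$,
\[
1\longrightarrow 1+\mathcal{N}\longrightarrow\mathcal{O}_W^*\longrightarrow\mathcal{O}_{E'}^*\longrightarrow 1 .
\]
Filtering $1+\mathcal{N}$ by the subsheaves $1+\mathcal{N}^j$, whose successive quotients are the additive sheaves $\mathcal{N}^j/\mathcal{N}^{j+1}\simeq L_{\chi^j}$ for $j\ge 1$, the vanishing above yields $H^1(E',1+\mathcal{N})=0$; and $H^2$ vanishes as $E'$ is a curve. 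The long exact sequence then shows that the restriction map $\Pic(W)\to\Pic(E')$ is an isomorphism, which is compatible with algebraic equivalence and hence restricts to $\Pic^0(W)\xrightarrow{\sim}\Pic^0(E')=E'(k)$.

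It remains to compare $E'(k)$ with $E(k)$ as abstract groups. The isogeny $\phi$ gives an exact sequence $0\to H(k)\to E(k)\to E'(k)\to 0$ with $H(k)$ finite. As $k$ is algebraically closed, $E(k)$ is a divisible abelian group; the torsion-free rank and all $\ell$-adic coranks of a divisible abelian group are unchanged upon passing to the quotient by a finite subgroup, so $E'(k)\cong E(k)$. Combining the two isomorphisms gives $\Pic^0(f^{-1}(y))\cong E(k)$, as claimed. The main obstacle is the vanishing $H^1(E',L_{\chi^j})=0$, which is what collapses the a priori unipotent contribution of the non-reduced structure and reduces the entire Picard computation to the elliptic curve $E'$; pinning down $X_z$ as the totally ramified cyclic-cover fiber and identifying the graded pieces of the thickening with the torsion bundles $L_{\chi^j}$ is where the genuine content lies, while the final step is a soft statement about divisible groups.
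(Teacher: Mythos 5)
Your proof is correct, and it takes a genuinely different route from the paper's. Both arguments start from the description (\ref{fiber_contracted}) of the fiber as $E\times^{H}F$ with $F\simeq\Spec\left(k[t]/t^{n}\right)$, but then move in opposite directions. The paper goes \emph{up} the $H$-torsor $E\times F\to f^{-1}(y)$: it identifies $\Pic(f^{-1}(y))$ with $\Pic_{H}(E\times F)$ by descent, computes $\Pic(E\times F)$ by pushing $\mathcal{O}^{\times}_{E\times F}$ forward to $E$, and uses the nontriviality of the weights of $T,\dots,T^{n-1}$ to cut the answer down to (the degree-zero part of) $\Pic(E)$. You go \emph{down} to the reduced subscheme $E'=E/H$: you filter $1+\mathcal{N}$ by the subsheaves $1+\mathcal{N}^{j}$, identify the graded pieces with the associated bundles $L_{\chi^{j}}\in\ker(\phi^{*})$ --- nontrivial torsion line bundles of degree zero, hence with vanishing $H^{0}$ and $H^{1}$ --- and conclude that restriction $\Pic(f^{-1}(y))\to\Pic(E')$ is an isomorphism; the structure theory of divisible abelian groups then converts $E'(k)$ into $E(k)$. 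Your route buys two things. First, it makes transparent why the nilpotent structure contributes nothing: the unipotent pieces are twisted by nontrivial torsion bundles and have no cohomology, whereas the paper sees the nonzero groups $H^{1}(E,\mathcal{O}_{E})^{n-1}$ upstairs and must kill them by taking $H$-invariants. Second, it yields a canonical identification of $\Pic^{0}(f^{-1}(y))$ with $E'(k)$, which is sharper than the paper's argument: the forgetful map $\Pic_{H}(E\times F)\to\Pic(E\times F)$ has a finite kernel (precisely your classes $L_{\chi^{j}}$ viewed on the fiber, cf.\ \Cref{ker_finite}), so the paper's computation of the image, taken literally, exhibits $\Pic^{0}(f^{-1}(y))$ only as an extension of $E(k)$ by a finite group --- your final divisible-group step is exactly what closes that loop. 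What the paper's approach buys is that it lands on $E$ directly, with no appeal to the classification of divisible groups, and its weight-space computation is uniform with the rest of \Cref{Section:diagonalizable}. Note also that your identification $\ker(\phi^{*})\simeq X^{*}(H)\simeq\Z/n\Z$ uses that $H=\boldsymbol{\mu}_{n}$ is diagonalizable, consistent with the standing assumption of the section.

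One step you should make explicit: passing from $\Pic(f^{-1}(y))\simeq\Pic(E')$ to $\Pic^{0}(f^{-1}(y))\simeq\Pic^{0}(E')$ requires fixing what $\Pic^{0}$ means on the nonreduced fiber. If it is the degree-zero part, with degree computed on the reduced subscheme (which is how the paper reads), the claim is immediate from your isomorphism. If it is the group of $k$-points of the identity component of the Picard scheme, add one line: the restriction homomorphism of Picard schemes carries the identity component onto a connected closed subgroup of $\underline{\Pic}^{0}_{E'}=E'$, which cannot be trivial --- otherwise $\Pic(f^{-1}(y))\simeq\Pic(E')$ would inject into the finitely generated group $\mathrm{NS}(f^{-1}(y))$, impossible since $\Pic(E')$ contains the divisible group $E'(k)$ --- hence equals $E'$; combined with injectivity on $k$-points this gives the claim. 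Either way this is a cosmetic point, and your argument stands.
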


In particular, this excludes the possibility of a multiplicative or additive reduction, i.e.\ some fiber over a branch point $y \in Y\setminus Y_{\text{fr}}$ being isomorphic to $\Gm$ or to $\Ga$ respectively, as illustrated in the survey paper \cite[4.2]{Schuett} on elliptic surfaces with sections.
   
   \begin{proof}
   We make use of the local structure of \Cref{nowild}. Let $x \in X$ such that $\pi(x)=y$ and let $H=\boldsymbol{\mu}_n$ be the stabilizer of the $G$-action at the point $x$. Then the fiber of $f$ above $y$, as described in (\ref{fiber_contracted}), satisfies
    \begin{align}
    \label{F_fiber}
    f^{-1}(y) = E \times^H F, \quad \text{where} \quad F = \pi^{-1}(y) \simeq \Spec\left( k[T]/(T^n)\right).
    \end{align}
    In particular, the variable $T$ has weight equal to $1 \in X^*(H)$. 
    Let $pr_E\colon E \times F \rightarrow E$ denote the projection on $E$. Since $\mathcal{O}(F)$ is a local ring, its invertible elements are those in $k^\times+\mathfrak{m}$, where $\mathfrak{m}$ is generated by $T$. In particular, this yields
    \[
    (pr_E)_* \mathcal{O}_{E \times F}^\times = \left( \mathcal{O}_E[T]/(T^n) \right)^\times \simeq \mathcal{O}_E^\times \oplus \mathcal{O}_E^{n-1}.
    \]
    Next, we claim that
    \[
    R^1(pr_E)_* \mathcal{O}^\times_{E \times F} =0.
    \]
Taking higher direct images commutes with taking stalks (see \cite[Exposé VIII, Theorem 5.2]{SGA4}) and over any point of $E$, the fiber is isomorphic to the local scheme $F$. Hence, we can use that $H^1(F, \Gm)=0$ to conclude that the above direct image is trivial. 
When putting together the two equalities that we just proved, we get
    \begin{align*}
\Pic(E \times F) & = H^1(E \times F,\mathcal{O}_{E \times F}^\times) = H^1(E, \mathcal{O}_E^\times \oplus \mathcal{O}_E^{n-1})\\
& = \Pic(E) \times H^1(E,\mathcal{O}_E)^{n-1} = \Pic(E) \times k^{n-1}.
\end{align*}
Now, in order to get back to the Picard group of $f^{-1}(y)$, let us consider the map
\[
\Pic(f^{-1}(y)) = \Pic(E \times^H F) = \Pic_H(E \times F) \longrightarrow \Pic(E \times F).
\]
The second equality is due to the fact that the map $E \times F \to E\times^H F$ is an $H$-torsor (see \Cref{remark picg}). 
The image of the above map is given by $\Pic(E)$, because the $H$-invariant part of $\mathcal{O}(F)$ is just $k$, since $T,\ldots, T^{n-1}$ have all nontrivial weights. In particular, considering the degree $0$ part, we get that it is given simply by the group $E$.
\end{proof}

\subsection{The unipotent case: an example} 
At the moment we are unable to deal with more general classes of groups, such as linearly reductive or unipotent ones. This is due to the fact that the methods used for diagonalizable groups clearly do not apply. We set up here some notation and treat one example, and leave the more general case as an open question. Similar problems, involving wild $\Z/p\Z$ singularities, have been recently dealt with in works such as \cite{Lorenzini,Kentaro,stefan}.

\medskip

Let us consider a finite and constant group $G$ of arbitrary order. Let $L$ and $K$ be the function field of $X$ and $Y$ respectively, so that $G$ is the Galois group of the field extension $L/K$. Let $x \in X$ be some non-free point and $y \in Y$ its image. Then the local rings $\mathcal{O}_{X,x}$ and $\mathcal{O}_{Y,y}$ are discrete valuation rings; let $v(x)$ be the valuation at $x$. If $L_x$ and $K_y$ denote the respective completions of $L$ and $K$, then
\[
H(x) \defeq \Stab_G(x) = \Gal (L_x/K_y).
\]
Let $t$ be a local uniformizer at $x$, and let us consider the following integer (coming from the so-called Artin representation of $G$):
\[
a(x) = [G \colon H(x)] \sum_{g \in H(x)\setminus \{1\}} i_x(g), \quad \text{where} \quad i_x(g) := v(x)(g\cdot t -t).
\]
Following \cite[Chapter VI, Proposition 7]{Serre}, the \emph{Hurwitz formula} becomes
\begin{align}
\label{Hurwitz_constant}
\deg \omega_X = \vert G \vert \cdot \deg \omega_Y + \sum_{x \in X \setminus X_{\text{fr}}} a(x).
\end{align}
Unlike in the diagonalizable case, the ramification factors $a(x)$ can be \emph{strictly bigger} than $\vert H(x) \vert -1$; for instance in \Cref{ex_u} below, where a factor $2$ appears. 

\begin{example}
\label{ex_u}
    Let us assume that the characteristic is any prime number $p>0$ and consider the action of $G= \Z/p\Z$ on $\proj^1$ given on the local chart $\AA^1$ with coordinate $t$ by
    \[
    g\cdot t = t+g, \quad \text{for } g \in \Z/p\Z,
    \]
    so that the only fixed point is $x= \infty$ with local uniformizer $u = 1/t$. By the Hasse-Arf theorem, there exists a unique integer $m= m_0$ such that
    \[
    v(x)(g\cdot t -t) = 1+m_0 \text{ for all } g \neq 0.
    \]
    Let us compute it for $g=1$: locally around $x$ we have
    \[
    1\cdot u = \frac{1}{1+t} = u \frac{1}{1+u} = u \sum_i (-u)^i = u-u^2+u^3 +\ldots
    \]
In particular any $g\in G$ distinct from $0$ satisfies $i_x(g) = 2$, and hence we get $m_0 = 1$ and 
\[
a(x) = 2(p-1).
\]
Let us notice that this corresponds exactly to the realization of the surface $\mathsf{A}_0$ for an ordinary elliptic curve $E$, which is described in \Cref{A0}.
\end{example}


\subsection{Proofs of \Cref{theoremC} and \Cref{PropositionB}}\label{Section:proofs}

\begin{proof}[Proof of \Cref{theoremC}]
    By \Cref{trivialR1}, the elliptic fibration $f$ has only tame fibers. The equality $\kappa(S) = \kappa'(X)$ follows from (\ref{equalityKodaira}), the computations of $\chi(\mathcal{O}_S)$ and the irregularity $q(S)$ are done in \Cref{Irregularity}. The neutral component of the Picard group (seen as an abstract group) of each fiber $f^{-1}(y)$ is isomorphic to $E$ thanks to \Cref{pic_fibers}, while the fact that the Picard scheme of $S$ is reduced is shown \Cref{Pic0reduced}. Finally, the Picard rank of $S$ is computed in \Cref{picardrank}.
\end{proof}

\begin{proof}[Proof of \Cref{PropositionB}] The Kodaira dimension is determined by the degree of the dualizing sheaf, as illustrated in \Cref{cor:degreeomega}. The different cases are analyzed in \Cref{sec:kodaira}, the case where $Y$ is the projective line being formulated in \Cref{Nfibers}.
\end{proof}

\printbibliography

\end{document}